\theoremstyle{remark}
\def\R{\mathbb{R}}
\def\cN{\mathcal{N}}
\def\cA{\mathcal{A}}
\def\cK{\mathcal{K}}
\def\cT{\mathcal{T}}
\def\cE{\mathcal{E}}
\def\p{\partial}
\def\[{\partial}
\def\O{\Omega}
\def\ssT{{\scriptscriptstyle T}}
\def\HT{{H^2(\O,\cT_h)}}
\def\mean#1{\left\{\hskip -5pt\left\{#1\right\}\hskip -5pt\right\}}
\def\jump#1{\left[\hskip -3.5pt\left[#1\right]\hskip -3.5pt\right]}
\def\smean#1{\{\hskip -3pt\{#1\}\hskip -3pt\}}
\def\sjump#1{[\hskip -1.5pt[#1]\hskip -1.5pt]}
\def\jumptwo{\jump{\frac{\p^2 u_h}{\p n^2}}}
\def\b#1{\boldsymbol{#1}}
\def\norm #1{{\left\vert\kern-0.25ex\left\vert\kern-0.25ex\left\vert #1 
    \right\vert\kern-0.25ex\right\vert\kern-0.25ex\right\vert}}
\theoremstyle{plain}
\newtheorem{theorem}{Theorem}[section]
\newtheorem{lemma}[theorem]{Lemma}
\newtheorem{example}[theorem]{Example}
\theoremstyle{remark}
\newtheorem{remark}[theorem]{Remark}
\begin{document}
\allowdisplaybreaks[4]
\numberwithin{figure}{section}
\numberwithin{table}{section}
 \numberwithin{equation}{section}
%
\title[ Discontinuous Galerkin Methods for Frictional Contact Problem with Normal Compliance]
 {Unified Analysis of Discontinuous Galerkin Methods for Frictional Contact Problem with normal compliance}

\author{Kamana Porwal}\thanks{The first author's work is supported  by CSIR Extramural Research Grant}
\address{Department of Mathematics, Indian Institute of Technology Delhi - 110016}
\email{kamana@maths.iitd.ac.in}

\author{Tanvi}\thanks{}

\address{Department of Mathematics, Indian Institute of Technology Delhi - 110016}
\email{tanviwadhawan1234@gmail.com}
\date{}
\begin{abstract}
In this article, a reliable and efficient a posteriori error estimator of residual type is derived for a class of discontinuous Galerkin methods for the frictional contact problem with reduced normal compliance which is modeled as a quasi-variational inequality. We further derive a priori error estimates in the energy norm under the minimal regularity assumption on the exact solution.  The convergence behavior of error over uniform mesh and the performance of error estimator are illustrated by the numerical results.
\end{abstract}

\keywords{ frictional contact problem; normal compliance; discontinuous Galerkin methods; a posteriori error analysis; variational inequalities; medius analysis}
%
%
\maketitle
\allowdisplaybreaks
\def\R{\mathbb{R}}
\def\cA{\mathcal{A}}
\def\cK{\mathcal{K}}
\def\cN{\mathcal{N}}
\def\p{\partial}
\def\O{\Omega}
\def\bbP{\mathbb{P}}
\def\cV{\mathcal{V}}
\def\cM{\mathcal{M}}
\def\cT{\mathcal{T}}
\def\cE{\mathcal{E}}
\def\bF{\mathbb{F}}
\def\bC{\mathbb{C}}
\def\bN{\mathbb{N}}
\def\ssT{{\scriptscriptstyle T}}
\def\HT{{H^2(\O,\cT_h)}}
\def\mean#1{\left\{\hskip -5pt\left\{#1\right\}\hskip -5pt\right\}}
\def\jump#1{\left[\hskip -3.5pt\left[#1\right]\hskip -3.5pt\right]}
\def\smean#1{\{\hskip -3pt\{#1\}\hskip -3pt\}}
\def\sjump#1{[\hskip -1.5pt[#1]\hskip -1.5pt]}
\def\jumptwo{\jump{\frac{\p^2 u_h}{\p n^2}}}

\par
\noindent
\section{Introduction}
This article is devoted to the numerical analysis of the frictional contact problem with normal compliance. Frictional contact problems are of great interest since the processes involving frictional contact between two bodies occur in many engineering and industrial applications. In these problems, an elastic body, under the influence of body forces and surface tractions, comes into contact of a rigid surface on a part of its boundary (called contact boundary). The lubricated contact boundary results in a frictionless contact problem while we get frictional contact problems when the contact boundary is not lubricated.  We refer to the book by Kikuchi \& Oden \cite{KO:1988:CPBook} for modeling and detailed understanding of frictionless and frictional contact problems. In order to study these problems within the framework of  variational inequalities the first attempt was made in \cite{Duv:1976:IMP}. In most cases, the contact problems arising in real life have interface with non-zero compliance because of the presence of asperities and absorbed impurities etc in real surfaces. The frictional contact problem with normal compliance can be modeled as a quasi-variational inequality. The convergence analysis of conforming finite element approximation based on quadrilateral elements for frictional contact problem with normal compliance is studied in \cite{Oden:1993:NC}. A Cea's type error inequality of conforming finite element method for frictional contact problem with reduced normal compliance is obtained in \cite{Han:1993:FCPNC}, therein a posteriori error analysis is also discussed using regularization method.  We refer to \cite{Fern:2010:APFC} for residual type a posteriori error estimates of linear continuous finite element method for the same problem. Some more notable works on the numerical analysis of static/time dependent frictional contact problem with normal complaince can be found in \cite{Klarbring:1988:Frictional, Klarbring:1989:Frictional, Andresson:1991:quasistatic,Friction:1999:ANA,Xaio:2018:DGNC}.  
 
\par
Discontinuous Galerkin (DG) methods, which were first proposed in \cite{RH:1973:DG}, are mainly attractive due to the flexibility of using local hp adaption. The articles \cite{Arnold:1982:IPD, ABCM:2002:UnifiedDG, PAE:2012:DGbook, HW:2007:Book, Riviere:2008:DGBook} are excellent references for the comprehensive study of these methods. DG methods are also widely used to solve variational inequalities. We refer to \cite{WHC:2010:DGVI,WHC:2011:DGSP,BS:2012:LDG,GGP:2021:DGVI}  and \cite {TG:2014:VIDG,TG:2014:VIDG1,BS:2014:hp-apost,WHE:2015:ApostDG,BS:2015:hpAdapt,TG:2016:VIDG1} respectively, for a priori and a posteriori analysis of DG methods for variatonal inequalities of the first kind. The articles \cite{TG:2016:NMPDE, KPorwal:2017:Tresca} discuss the convergence analysis of DG methods over uniform mesh and adaptive mesh based on a posteriori error estimator for variational inequalities of the second kind. Further, we refer to \cite{HW:2002:FourthVI,Bostan:2004:adaptive,BH:2004:VI2,Bostan:2006:APFCP,HKS:2011:VI2,Wang:2013:VISK,MA:2015:APVI} and references therein for other works on the numerical analysis of variational inequalities of the second kind. 
In \cite{Xaio:2018:DGNC}, DG methods for frictional contact problem with normal compliance has been proposed. In this article, we first derive a residual type a posteriori error estimator of  DG methods for the frictional contact problem with reduced normal compliance which is shown to be both reliable and efficient. Followed by that, we establish an abstract a priori error estimate by assuming minimal regularity of the exact solution. The analysis is carried out in a general framework which holds for a class of DG methods. Numerical results are presented to
illustrate the theoretical findings.

\par

We consider the deformation of an elastic body unilaterally supported by a rigid foundation and occupying domain  $\O\subset \mathbb R^2$ which is a bounded polygonal domain with Lipschitz boundary $\partial \Omega=\Gamma$. The boundary $\Gamma$ is partitioned into three relatively open mutually disjoint  parts $\Gamma_D$, $\Gamma_F$ and $\Gamma_C$ with meas$(\Gamma_D)>0$. Let $S$ denotes the space of second order symmetric tensors on $\mathbb {R}^{2}$ with the scalar product defined as $\b{\b{w}:\phi}=w_{ij}\phi_{ij}~ \mbox{for}~ \b{\b{w},~\phi} \in S$ and the corresponding norm $|\b{\phi}|:=(\b{\phi}:\b{\phi})^{1/2}$.
\par
The linearized strain tensor $\b{\b{\epsilon}}$ and stress tensor $\b{\b{\sigma}}$ belong to the class of second order symmetric tensors and are defined respectively, as
\begin{align}
\b{\epsilon}(\b{u})&=\frac{1}{2} (\nabla\b{u}+\nabla\b{u}^T), \label{1.1}\\
\b{\sigma}(\b{u})&= C\b{\epsilon}(\b{u}), \label{1.2}
\end{align}
where, the vector-valued function $\b{u} : \O \subset \mathbb R^2 \rightarrow \mathbb R^2$ denotes the displacement vector and the operator $C :\O \times S \rightarrow S$ is the fourth-order elasticity tensor of the material.
In the following study, we assume elastic body to be homogeneous and isotropic, therefore
\begin{align}\label{1.3}
 C\b{\epsilon}(\b{u}):=\lambda tr(\b{\epsilon}(\b{u}))I+2\mu\b{\epsilon}(\b{u}).   
\end{align}
where, $\lambda>0$ and $\mu>0$ are Lam$\acute{e}$'s coefficients and $I$ denotes $2 \times 2$ identity matrix.
\par
\noindent
For any displacement field $\b{v}$, we adopt the notation $v_n=\b{v}\cdot \b{n}$ and $\b{v}_\tau=\b{v}-v_n\b{n}$ respectively, as its normal and tangential component on the boundary where $\b{n}$ is the outward unit normal vector to $\Gamma$. Similarly, for a tensor-valued function $\b{\sigma} : \Omega \rightarrow S$ the normal and tangential component are defined as ${\sigma}_n=\b{\b{\sigma} n}\cdot \b{n}$ and $\b{\b{\sigma}}_\tau=\b{\b{\sigma}}\b{n}-\b{\sigma}_n \b{n}$ respectively. Further, we have the following decomposition formula
\begin{align*}
  (\b{\sigma}\b{n})\cdot\b{v}= \sigma_nv_n+ \b{\sigma}_\tau\cdot\b{v_\tau}.
\end{align*}
In order to state the weak formulation for the frictional contact problem, we introduce the space $\b{V}$ of admissible displacements by
     \begin{align*}
     \b{V}&=\{\b{v} \in {[H^1(\Omega)]}^2: \b{v}~=~\b{0}~on~\Gamma_D\}.
     \end{align*}
\noindent
Given $\b{f} \in [L^2(\O)]^2,~\b{g} \in [L^2(\Gamma_F)]^2,~g_a \in H^{1/2}(\Gamma_C)$ with $g_a>0$, variational formulation of 
the frictional contact problem with normal compliance is to find $\b{u}~\in~\b{V}$ s.t.
    \begin{align}\label{1.9}
    a(\b{u, v - u})+j_n(\b{u, v - u})+j_\tau(\b{u}, \b{v})-j_\tau(\b{u}, \b{u})\geq (\b{f, v - u})~~\forall ~~\b{v} \in \b{V},
    \end{align}
where, the bilinear form $a(\cdot, \cdot)$, the functional $j_n(\cdot, \cdot)$, $j_\tau(\cdot, \cdot)$ and the linear functional $(\b{f, \cdot})$ are defined by
    \begin{align*}
    a(\b{w, v})&=\int_\Omega \b{\sigma} (\b{w})\colon \b{\epsilon}(\b{v})~dx,\\
    j_n(\b{w, v})&=\int_{\Gamma_C} c_{n}(w_{n}-g_a)_+^{m_n}{v_n}~ds,\\
    j_\tau(\b{w, v})&=\int_{\Gamma_C} c_{\tau}(w_n-g_a)^{m_t}_+|\b{{v_\tau}}|~ds,\\
    (\b{f, v})&= \int_\Omega \b{f} \cdot \b{v}~dx + \int_ {\Gamma_F} \b{g} \cdot \b{v}~ds~~\forall~ \b{w},\b{v}~\in~\b{V},
    \end{align*}
with $c_n,c_{\tau} \in L^{\infty}(\Gamma_C)$, $1 \leq m_n < \infty$ and $0 \leq m_t < \infty$ .
The classical(strong) form associated to the quasi variational inequality \eqref{1.9} is to find the displacement vector $\b{u}: \Omega \rightarrow \mathbb{R}^2$ satisfying the equations \eqref{1.4}-\eqref{1.8},
\begin{align}
    \b{-div} ~~\b{\sigma}(\b{u}) &= \b{f} ~~~~\textit{in}~\Omega,\label{1.4}\\
    \b{u} &= \b{0} ~~~~\textit{on}~\Gamma_D, \label{1.5}\\
    \b{\sigma}(\b{u})\b{n} &= \b{g} ~~~~\textit{on}~\Gamma_F,\label{1.6}\\
    {\sigma}_{n}(\b{u}) &= -c_{n}(u_{n}-g_a)_+^{m_n}~~~{on} ~~~\Gamma_C, \label{1.7}
    \end{align}
    \begin{align} \label{1.8}
    \begin{split}
    \left.\begin{aligned}
    |\b{\b{\sigma}_\tau}| &< c_{\tau}(u_{n}-g_a)_+^{m_t} \implies {\b{u_{\tau}}} = 0 \\
    |\b{\b{\sigma}_\tau}| &= c_{\tau}(u_{n}-g_a)_+^{m_t} \implies {\b{u_{\tau}}} = -\lambda \b{\sigma}_\tau~\textit{for some}~\lambda \geq 0
    \end{aligned}\right\}  on~~\Gamma_C.
    \end{split}
    \end{align}

 The equation ${(1.5)}$ is the equilibrium equation, in which volume forces of density $\b{f}$ acts in $\O$. The equation ${(1.6)}$ justifies that displacement field vanishes on $\Gamma_D$, which  means that the body is clamped on $\Gamma_D$. Surface traction of density $\b{g}$ acts on $\Gamma_F$ in ${(1.7)}$. The normal compliance condition is given by ${(1.8)}$ where $g_a$ is the initial gap between the body and foundation, $u_n$ is the normal displacement and $(u_n-g_a)_+$ represents the penetration of the body in the foundation. Here, ${c_n} \in  L^{\infty}(\Gamma_C)$ is a non negative function  with the property $c_n(\b{x})=0$ for $\b{x}\leq 0$. The relation ${(1.9)}$ form a version of the Coulomb's Law of dry friction where ${c_\tau} \in L^{\infty}(\Gamma_C)$ is a non negative friction bound  with the property $c_\tau(\b{x})=0$ for $\b{x}\leq 0$.\\

In this article, we will analyze the frictional contact problem with reduced normal compliance law \cite{Han:1993:FCPNC} i.e. $m_t=0$. Therefore, \eqref{1.8} steps down to
\begin{align*}
   \begin{split}
    \left.\begin{aligned}
    |\b{\b{\sigma}_\tau}| &< c_{\tau} \implies {\b{u_{\tau}}} = 0 \\
    |\b{\b{\sigma}_\tau}| &= c_{\tau} \implies {\b{u_{\tau}}} = -\lambda \b{\sigma}_\tau~\textit{for some}~\lambda \geq 0
    \end{aligned}\right\}  on~~\Gamma_C.
    \end{split}
\end{align*}
In this case the functional $j_\tau(\b{u},\b{v})$ reduces to $j_\tau(\b{v})$ which is defined by 
\begin{align*}
j_\tau(\b{v})=\int_{\Gamma_C} c_{\tau}|\b{{v_\tau}}|~ds.
\end{align*}
The variational formulation \eqref{1.9} reduces to the following problem: to find the displacement vector $\b{u} \in \b{V}$ s.t.
    \begin{align}\label{1.10}
    a(\b{u, v - u})+j_n(\b{u, v - u})+j_\tau(\b{v})-j_\tau(\b{u})\geq (\b{f, v - u})~~\forall ~~\b{v} \in \b{V}.
    \end{align}
The existence and uniqueness of the solution $\b{u}$ of the problem $\eqref{1.10}$ follows from \cite{Han:1993:FCPNC}.
\par
\noindent
 We define,
    \begin{align*}
    \Lambda= \{ \b{\mu} \in [L^{\infty}(\Gamma_C)]^2 ~:~ |\b{\mu}|\leq 1 ~ \text{a.e.~ on} ~~ \Gamma_C\}.
\end{align*}
Now, we will characterize the continuous solution $\b{u}$ of $\eqref{1.10}$ through the use of Lagrange multiplier \cite{HW:2002:FourthVI,KPorwal:2017:Tresca}.
\begin{lemma}\label{ref:Cchar}
There exists $\b{\lambda_{\tau} }\in \Lambda$ such that  
    \begin{align*}
    a(\b{u,v})+j_n(\b{u,v})+g(\b{\lambda_\tau,v})&=(\b{f},\b{v}) \quad ~\forall~ \b{v} \in \b{V}, \\
    \b{\lambda_{\tau}}\cdot\b{u_{\tau}}&=|\b{u_{\tau}}| ~~a.e~~ on~ \Gamma_C,
    \end{align*}
where
    \begin{align*}
    g(\b{\lambda_{\tau},v})=\int_{\Gamma_C}c_\tau \b{\lambda_{\tau}\cdot v_\tau}~ds.
    \end{align*}
\end{lemma}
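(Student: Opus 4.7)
\textbf{Proof plan for Lemma \ref{ref:Cchar}.}

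My plan is to isolate the non-smooth term by introducing the residual linear functional $L:\b{V}\to\R$ defined by
\[
L(\b{w}) = (\b{f},\b{w}) - a(\b{u},\b{w}) - j_n(\b{u},\b{w}),
\]
and then realise $L$ as integration against a Lagrange multiplier via Hahn--Banach plus Riesz representation. First I would probe the variational inequality \eqref{1.10} with $\b{v}=\b{u}\pm\b{w}$ for arbitrary $\b{w}\in\b{V}$. Together with the triangle inequality $|j_\tau(\b{u}\pm\b{w})-j_\tau(\b{u})|\le j_\tau(\b{w})$ (which follows from the sublinearity of $|\cdot|$ and non-negativity of $c_\tau$), this yields the key one-sided bound
\[
|L(\b{w})| \le j_\tau(\b{w}) = \int_{\Gamma_C} c_\tau\,|\b{w}_\tau|\,ds \qquad \forall\,\b{w}\in\b{V}.
\]

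Next I would pass to an $L^1$-framework. Consider the linear map $T:\b{V}\to [L^1(\Gamma_C)]^2$, $T\b{w}=c_\tau\b{w}_\tau|_{\Gamma_C}$, and define $\phi$ on the range $W:=T(\b{V})\subset[L^1(\Gamma_C)]^2$ by $\phi(T\b{w})=L(\b{w})$. The bound above shows simultaneously that $\phi$ is well-defined (if $T\b{w}=T\b{w}'$ then $L(\b{w})=L(\b{w}')$) and that $\phi$ is continuous on $W$ with norm at most $1$. By Hahn--Banach, $\phi$ extends to a functional on $[L^1(\Gamma_C)]^2$ of norm $\le 1$, and by the Riesz representation theorem this extension has the form $\psi\mapsto\int_{\Gamma_C}\b{\lambda}_\tau\cdot\psi\,ds$ for some $\b{\lambda}_\tau\in[L^\infty(\Gamma_C)]^2$ with $|\b{\lambda}_\tau|\le 1$ a.e., i.e.\ $\b{\lambda}_\tau\in\Lambda$. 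Specialising to $\psi=T\b{w}$ gives $L(\b{w})=g(\b{\lambda}_\tau,\b{w})$ for all $\b{w}\in\b{V}$, which is the first identity claimed.

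For the complementarity condition I would test \eqref{1.10} with the two admissible choices $\b{v}=\b{0}$ and $\b{v}=2\b{u}$. Using $j_\tau(\b{0})=0$ and the positive homogeneity $j_\tau(2\b{u})=2\,j_\tau(\b{u})$, both inequalities collapse to the sharp identity $L(\b{u})=j_\tau(\b{u})$. Substituting the representation of $L$ from the previous step,
\[
\int_{\Gamma_C} c_\tau\bigl(|\b{u}_\tau|-\b{\lambda}_\tau\cdot\b{u}_\tau\bigr)\,ds = 0.
\]
Since the integrand is pointwise non-negative (by $|\b{\lambda}_\tau|\le 1$ and $c_\tau\ge 0$), it vanishes a.e.; on the set $\{c_\tau=0\}$ one can simply redefine $\b{\lambda}_\tau:=\b{u}_\tau/|\b{u}_\tau|$ where $\b{u}_\tau\neq\b{0}$ without disturbing the first identity, producing $\b{\lambda}_\tau\cdot\b{u}_\tau=|\b{u}_\tau|$ a.e.\ on $\Gamma_C$.

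The step I expect to demand the most care is the Hahn--Banach/Riesz part, specifically verifying that $\phi$ is well-defined on $W$ and that the bound $\|\phi\|\le 1$ is exactly the ingredient needed to conclude $|\b{\lambda}_\tau|\le 1$ pointwise; the role of the weight $c_\tau$ (which may vanish on parts of $\Gamma_C$) must be tracked throughout so that the Lagrange multiplier is only genuinely pinned down on $\{c_\tau>0\}$. Everything else is a direct manipulation of \eqref{1.10} by convexity and positive homogeneity of $j_\tau$.
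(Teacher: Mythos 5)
Your proposal is correct and is essentially the standard Lagrange-multiplier construction that the paper invokes by citation (to Han--Wang and Porwal): probe the inequality with $\b{v}=\b{u}\pm\b{w}$ to get $|L(\b{w})|\le j_\tau(\b{w})$, pass through Hahn--Banach and $L^1$--$L^\infty$ duality to obtain $\b{\lambda_\tau}\in\Lambda$ representing $L$, and recover the complementarity condition from $L(\b{u})=j_\tau(\b{u})$ (testing $\b{v}=\b{0},2\b{u}$) plus the pointwise sign of the integrand. The only cosmetic remark is that on $\{c_\tau=0\}$ you should also set $\b{\lambda_\tau}$ arbitrarily (e.g.\ $\b{0}$) where $\b{u_\tau}=\b{0}$, where the identity $\b{\lambda_\tau}\cdot\b{u_\tau}=|\b{u_\tau}|$ holds trivially.
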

In the subsequent analysis, we also require the following bound on the exact solution $\b{u}$ of \eqref{1.10} by load vectors \cite{Han:1993:FCPNC}.
\begin{lemma}\label{lem:ubound} 
Let $\b{u} \in \b{V}$ be the solution of continuous problem \eqref{1.10}. Then 
       \begin{align*}
        \|\b{u}\|_{H^1 (\Omega)}\leq C (\|\b{f}\|_{L^2(\Omega)}+\|\b{g}\|_{L^2(\Gamma_F)})  
       \end{align*}
where $C$ is a constant independent of $h$.
\end{lemma}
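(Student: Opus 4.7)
The plan is to exploit the variational inequality \eqref{1.10} by testing with the admissible choice $\b{v}=\b{0}\in\b{V}$. This is the canonical trick for deriving stability estimates for variational inequalities, and it converts the inequality into an upper bound on $a(\b{u},\b{u})$ modulo the two nonlinear frictional/compliance terms. Specifically, substituting $\b{v}=\b{0}$ yields
\begin{align*}
a(\b{u},\b{u}) + j_n(\b{u},\b{u}) + j_\tau(\b{u}) - j_\tau(\b{0}) \le (\b{f},\b{u}),
\end{align*}
and since $j_\tau(\b{0})=0$ by definition, this reduces to $a(\b{u},\b{u}) + j_n(\b{u},\b{u}) + j_\tau(\b{u}) \le (\b{f},\b{u})$.

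Next I would argue that both nonlinear boundary functionals on the left are nonnegative and can therefore be discarded. For $j_\tau$ this is immediate from $c_\tau\ge 0$ and $|\b{u_\tau}|\ge 0$. For $j_n(\b{u},\b{u})=\int_{\Gamma_C} c_n(u_n-g_a)_+^{m_n} u_n\, ds$, the point is that wherever the positive part is nonzero we have $u_n>g_a>0$, so the integrand is pointwise nonnegative; elsewhere it vanishes. After dropping these terms we obtain the clean bound $a(\b{u},\b{u})\le (\b{f},\b{u})$.

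From here the estimate is a standard application of three classical tools: Korn's inequality (valid since $\mathrm{meas}(\Gamma_D)>0$) to bound $\|\b{u}\|_{H^1(\Omega)}^2 \lesssim a(\b{u},\b{u})$; the Cauchy–Schwarz inequality applied separately on $\Omega$ and on $\Gamma_F$ to split $(\b{f},\b{u})$ as $\|\b{f}\|_{L^2(\Omega)}\|\b{u}\|_{L^2(\Omega)}+\|\b{g}\|_{L^2(\Gamma_F)}\|\b{u}\|_{L^2(\Gamma_F)}$; and the trace theorem to control $\|\b{u}\|_{L^2(\Gamma_F)}$ by $\|\b{u}\|_{H^1(\Omega)}$. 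Dividing through by $\|\b{u}\|_{H^1(\Omega)}$ (trivially handling the case $\b{u}=\b{0}$) yields the claimed estimate.

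The only step requiring genuine care is the sign analysis of $j_n(\b{u},\b{u})$, because unlike $j_\tau(\b{u})$ its integrand is not obviously nonnegative: $u_n$ on $\Gamma_C$ is not a priori signed. The resolution relies on the hypothesis $g_a>0$ together with the structure of $(\cdot)_+^{m_n}$, which forces $u_n>0$ on the support of the integrand. Once this observation is in place, the remainder of the argument is routine, and the constant $C$ depends only on Korn's constant, the trace constant, and the Lamé coefficients, but not on the mesh parameter $h$.
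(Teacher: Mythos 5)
Your proof is correct: testing \eqref{1.10} with $\b{v}=\b{0}$, noting $j_\tau(\b{0})=0$ and the pointwise nonnegativity of $c_n(u_n-g_a)_+^{m_n}u_n$ (using $g_a>0$) and of $c_\tau|\b{u}_\tau|$, then invoking Korn's inequality, Cauchy--Schwarz and the trace theorem is exactly the standard stability argument. The paper itself omits the proof and simply cites Theorem~2.3 of Han (1993), which proceeds along these same lines, so your argument coincides with the one the paper defers to.
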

\par
In view of the following imbedding result \cite{Ciarlet:1978:FEM},
\begin{align}\label{im}
   H^1(\O) \hookrightarrow L^q(\Gamma_C) ~\forall~ q\in [1,\infty),
\end{align} 
  it can be be observed that $\sigma_n(\b{u}) \in L^2(\Gamma_C)$.
  \par\noindent
The rest of the article is arranged as follows: In next section, we introduce notations and present some useful preliminary results which will be used in subsequent analysis. DG formulation is presented for the continuous problem \eqref{1.10} in Section \ref{sec:DP}. Followed by that in Section \ref{sec:Apost}, a posteriori error analysis of DG methods for the frictional contact problem with reduced normal compliance \eqref{1.10} has been established.  A priori error analysis  with minimal regularity on exact solution $\b{u}$ of \eqref{1.10} is carried out  in Section \ref{sec:Apriori}. In Section \ref{sec:NumR},  numerical results are presented to illustrate the theoretical findings. Finally, we present the conclusions of this article in Section \ref{sec:Summ}.
\section{Preliminaries} \label{sec:Prelims}
\par
\subsection{Notations } The following notations will be used in the further analysis.
     \begin{align*}
     \mathcal{T}_h &:= \text{a family of regular triangulation of $\O$},\\
     \mathcal{E}_h &:= \text{set of all edges of } \mathcal{T}_h,\\
     \mathcal{E}_h^i &:= \text{set of all interior edges of $\mathcal{T}_h$},\\
     \mathcal{E}_h^b &:= \text{set of all boundary edges of $\mathcal{T}_h$},\\
     \mathcal{E}_h^D &:= \{ e \in \mathcal{E}_h^b : e \subset \Gamma_D\},\\
     \mathcal{E}_h^F &:=\{ e \in \mathcal{E}_h^b : e \subset \Gamma_F\}, \\
     \mathcal{E}_h^C &:= \{ e \in \mathcal{E}_h^b : e \subset \Gamma_C\},\\
     \mathcal{E}_h^o &:= \mathcal{E}_h^i \cup \mathcal{E}_h^D,\\
     \mathcal{T}_p &:= \text{set of all elements of $\mathcal{T}_h$ sharing the vertex $p$},\\
     \mathcal{T}_e &:= \text{set of all elements of $\mathcal{T}_h$ sharing the edge $e$},\\
     \mathcal{V}_h^i&:= \text{set of all interior vertices of }\mathcal{T}_h,\\
     \mathcal{V}_{T}&:= \text{set of all vertices of element $T$},\\
     \mathcal{V}_{\partial\Omega}&:= \text{set of all boundary vertices of }\mathcal{T}_h,\\
     \mathcal{V}_h ^F&:= \text{set of all  vertices of }\mathcal{T}_h  \text{ lying on }\Gamma_F,\\
     \mathcal{V}_h ^C&:= \text{set of all vertices of }\mathcal{T}_h  \text{ lying on }\overline{\Gamma_C},\\
     \mathcal{V}_h ^D&:= \text{set of all vertices of }\mathcal{T}_h \text{ lying on~}  \overline{ \Gamma_D},\\
     T &:= \text{an element of }\mathcal{T}_h, \\
     h_T &:= \text {diameter of $T$},\\
     h &:= \text{max}\{h_T : T \in \mathcal{T}_h\},\\
     h_e &:= \text{length of an edge $e$},\\
     P_{k}(T)&:= \text{space of polynomials of degree}\leq k ~\text{defined on ~} T, ~0 \leq k \in \mathbb{Z}.\\
     \end{align*}
     The notations, $\nabla_h(\b{v})$ and $\b{div_h(\b{v})}$, respectively denote elementwise gradient and divergence i.e. for $T \in \mathcal{T}_h,$~ $\nabla_h(\b{v})|_{T}  = \nabla \b{v}, ~\b{div_h(\b{v})}|_T= \b{div(\b{v})}$. Further, for $\b{v} \in \mathcal{\b{V_h}}$, $\b{\epsilon_h}(\b{v})$ and $\b{\sigma_h}(\b{v})$ are such that $\b{\epsilon_h}(\b{v})|_T =\b{\epsilon(\b{v})},~ T \in \mathcal{T}_h$ and $\b{\sigma_h}(\b{v})=2\mu \b{\epsilon_h}(\b{v})+ \lambda tr(\b{\epsilon_h}(\b{v}))I.$
     \par 
     \noindent
In order to deal with nonsmooth functions, we define the broken Sobolev space $[H^1(\Omega,\mathcal{T}_h)]^2$ as
\begin{align*}
    [H^1(\Omega,\mathcal{T}_h)]^2 := \{ \b{v}\in [L^2(\Omega)]^2 : \b{v}|_T\in [H^1(T)]^2~ \forall~ T\in \cT_h\}
\end{align*}
and the corresponding norm on this space is defined as $\|\cdot\|^2_{1,h}=\sum_{T \in \mathcal{T}_h} \|\cdot\|^2_{H^1(T)}$.
\par
Let $e \in \cE_h^i$ be an interior edge and let $T^{+}$ and $T^{−}$ be the neighbouring elements s.t. $e \in \partial{T}^{+} \cup \partial T^{−}$ and let $\b{n}^{\pm}$ is the unit outward normal vector on $e$ pointing from $T^{+}$ to $T^{-}$ s.t. $\b{n^{-}}= - \b{n^{+}}.$ For a vector valued function $\b{v}\in [H^1(\Omega,\mathcal{T}_h)]^2$ and a matrix valued function $\b{\phi}\in [H^1(\Omega,\mathcal{T}_h)]^{2\times 2}$, averages $\smean{\cdot}$ and jumps $\sjump{\cdot}$ across the edge $e$ are defined as follows:     
\begin{align*}
&\smean{\b{v}}= \frac{1}{2} (\b{v^+}+\b{v^-})~~\text{and}~~\sjump{\b{v}}=\frac{1}{2} (\b{v^+}\otimes \b{n^+}+\b{n^+}\otimes\b{v^+}+\b{v^-}\otimes\b{n^-}+\b{n^-}\otimes\b{v^-}),\\
&\smean{\b{\phi}}= \frac{1}{2} (\b{\phi^+}+\b{\phi^-})~ \text{and}~~\sjump{\b{\phi}}=\b{\phi^+}\b{n^+}+ \b{\phi^-}\b{n^-},
\end{align*}
where
    $\b{v^{\pm}}=\b{v}|_{{T}^\pm},~\b{\phi^{\pm}}=\b{\phi}|_{{T}^\pm}.$
\par
For any $e \in \cE_h^b$, it is clear that there is a triangle $T \in \mathcal{T}_h$ such that $e\in \partial T \cap \partial\O$. Let $\b{n_e}$ be the unit normal of $e$ that points outside $T$. Then, the averages $\smean{\cdot}$ and jumps $\sjump{\cdot}$ of vector valued function $\b{v}\in [H^1(\Omega,\mathcal{T}_h)]^2$ and a matrix valued function $\b{\phi}\in [H^1(\Omega,\mathcal{T}_h)]^{2\times 2}$ are defined as follows:
\begin{align*}
&\smean{\b{v}}=\b{v},~~\text{and}~~\sjump{\b{v}}=\frac{1}{2}(\b{v}\otimes \b{n_e}+\b{n_e}\otimes\b{v}),
\\
&\smean{\b{\phi}}= \b{\phi},~~\text{and}~~\sjump{\b{\phi}}=\b{\phi}\b{n_e}.
\end{align*}
In the above definitions $\b{v}\otimes \b{n}$ is a $2\times 2$ matrix with $v_in_j$ as its $(i,j)^{th}$ entry.
\par
The discontinuous finite element space $\b{V_h}$ is defined as 
\begin{align*}
\b{V_h}&=\{ \b{v} \in [L^2(\Omega)]^2: \b{v}|_T \in [P_1(T)]^2~~ \forall~ T \in \mathcal{T}_h\}.
\end{align*}
 In the subsequent analysis, we will also require the conforming finite element subspace defined by $\b{V_c} =\b{V_h}\cap \b{V}$, which we choose as standard Lagrange linear finite element space.
\par
Throughout the article, $C$ denotes a generic positive constant that is independent of mesh parameter $h$. The notation  $X \sim Y$ says that there exists positive constants $C_1, C_2$ such that $C_1 Y \leq X \leq C_2 Y.$
\par\noindent
The following Clement type approximation result \cite{BScott:2008:FEM} will be useful in establishing convergence analysis.
\begin{lemma} \label{lem:Interpolation}
 Let $\b{v}\in \b{V}$. Then there exist $\b{v_h}\in \b{V_c}$ such that on any $ T \in \mathcal{T}_h$,
    \begin{align*}
    \|\b{v}-\b{v_h}\|_{H^s(T)} \leq Ch_T^{1-s}\|\b{v}\|_{H^1({\mathcal{T}_T})},  ~~~ s=0,1,
    \end{align*}
    where $\mathcal{T}_T=\{ T' \in \mathcal{T}_h : \overline{T'}\cap \overline{T} \neq \phi\}$ and $C$ is a positive constant independent  of $h$ .
\end{lemma}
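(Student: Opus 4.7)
The plan is to construct $\b{v_h}$ as a Scott--Zhang/Clement-type quasi-interpolant into the conforming Lagrange linear space $\b{V_c}$, and then obtain the stated estimates by localising to patches and applying scaled Poincar\'e/Bramble--Hilbert arguments. For every vertex $p$ of $\mathcal{T}_h$ I would pick a representative edge or element $\omega_p$ containing $p$, and set $\b{v_h}(p)$ equal to the value at $p$ of the $L^2$-projection of $\b{v}|_{\omega_p}$ onto $[P_1(\omega_p)]^2$, with the crucial modification that for every $p\in\mathcal{V}_h^D$ I insist $\omega_p\subset \overline{\Gamma_D}$, so that $\b{v_h}(p)=\b{0}$ automatically (since $\b{v}=\b{0}$ on $\Gamma_D$ and the trace of the projection vanishes). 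Extending linearly on each triangle then gives $\b{v_h}\in\b{V_h}\cap C^0(\overline\Omega)$ with $\b{v_h}=\b{0}$ on $\Gamma_D$, hence $\b{v_h}\in\b{V_c}$.

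Next I would establish the local stability $\|\b{v_h}\|_{H^s(T)}\le C h_T^{-s}\|\b{v}\|_{L^2(\mathcal{T}_T)}+C h_T^{1-s}\|\nabla \b{v}\|_{L^2(\mathcal{T}_T)}$ for $s=0,1$. This is done by a standard reference-element argument: equivalence of norms on the finite-dimensional space $[P_1(\hat T)]^2$, the fact that each vertex functional $p\mapsto (\pi_{\omega_p}\b{v})(p)$ is bounded on $H^1(\omega_p)$ with the correct scaling by $h$, and an inverse-type scaling $\|\cdot\|_{H^s(T)}\sim h_T^{1-s}\|\cdot\|_{H^1(T)}$ for polynomials. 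The operator reproduces $[P_1]^2$-functions on each element $T$ provided $\omega_p\subset\mathcal{T}_T$ for every vertex $p\in\mathcal{V}_T$, a property guaranteed by the construction.

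Given polynomial preservation and local $H^1$-stability, the error bound follows from the classical Bramble--Hilbert/Dupont--Scott argument: for each $T$ choose a polynomial $\b{q}\in[P_1(\mathcal{T}_T)]^2$ best-approximating $\b{v}$ on $\mathcal{T}_T$ (or simply its average), write
\begin{equation*}
\b{v}-\b{v_h}=(\b{v}-\b{q})-(\b{v_h}-\b{q})=(\b{v}-\b{q})-(\Pi_h(\b{v}-\b{q})),
\end{equation*}
and apply stability to $\Pi_h(\b{v}-\b{q})$ together with $\|\b{v}-\b{q}\|_{H^s(\mathcal{T}_T)}\le C h_T^{1-s}\|\b{v}\|_{H^1(\mathcal{T}_T)}$ (Poincar\'e on the patch). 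For patches touching $\Gamma_D$ the constant $\b{q}$ must be replaced by $\b{0}$, and the Poincar\'e inequality is replaced by a Friedrichs-type inequality using meas$(\Gamma_D\cap \partial\mathcal{T}_T)>0$; this is where the boundary-preserving choice of $\omega_p$ pays off.

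The main obstacle, and the only nontrivial point beyond the usual Clement construction, is precisely the treatment of patches that meet $\Gamma_D$: one must simultaneously preserve $\b{v_h}=\b{0}$ there and keep the constants in the approximation estimate independent of $h$. I would handle this by the Scott--Zhang trick of choosing $\omega_p$ to be an \emph{edge} on $\Gamma_D$ when $p\in\mathcal{V}_h^D$, so that zero trace of $\b{v}$ automatically propagates through the local $L^2$-projection; the remainder of the argument then reduces to the standard scaled Bramble--Hilbert estimates outlined above.
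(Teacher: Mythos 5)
Your construction is correct: it is the standard Scott--Zhang/Cl\'ement quasi-interpolation argument (boundary-adapted choice of $\omega_p$ on $\Gamma_D$, local stability, polynomial reproduction, and a scaled Bramble--Hilbert/Poincar\'e step on the patch $\mathcal{T}_T$), which is precisely the theory the paper invokes by citing Brenner--Scott rather than proving the lemma itself. Since the right-hand side carries the full norm $\|\b{v}\|_{H^1(\mathcal{T}_T)}$ rather than the seminorm, your extra Friedrichs argument on Dirichlet patches is not even needed for the stated bound, but it does no harm.
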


The following inverse and trace inequalities \cite{BScott:2008:FEM,PAE:2012:DGbook} will also be frequently used in the subsequent analysis.
\begin{lemma}(Discrete trace inequality)
Let $\b{v} \in [H^1(T)]^2$ for $T \in \cT_h$ and $e$ be an edge of $T$. Then, it holds that
\begin{eqnarray}\label{2.1}
&\lVert \b{v} \rVert_{L^2{(e)}}\leq C\big(h_e^{-1}\lVert \b{v} \rVert_{L^2(T)}^2+h_e \lVert \nabla \b{v} \rVert_{L^2(T)}^2\big)^{\frac{1}{2}},
\end{eqnarray}
where $C$ is a constant independent of h.
\end{lemma}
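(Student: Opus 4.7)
The plan is to reduce to a reference configuration by the standard affine pullback and then apply a fixed trace inequality on the reference triangle, rescaling afterwards with the aid of shape regularity of $\mathcal{T}_h$.

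More concretely, let $\hat T$ be a fixed reference triangle with an edge $\hat e$ and let $F_T\colon \hat T\to T$, $F_T(\hat x)=B_T\hat x+b_T$, be the affine map from $\hat T$ onto $T$ with $F_T(\hat e)=e$. Setting $\hat{\b v}(\hat x)=\b v(F_T(\hat x))$, I would first record the familiar scaling relations
\begin{align*}
\|\b v\|_{L^2(T)}^2 \sim h_T^{2}\,\|\hat{\b v}\|_{L^2(\hat T)}^2,\qquad
\|\nabla\b v\|_{L^2(T)}^2 \sim \|\hat\nabla\hat{\b v}\|_{L^2(\hat T)}^2,\qquad
\|\b v\|_{L^2(e)}^2 \sim h_e\,\|\hat{\b v}\|_{L^2(\hat e)}^2,
\end{align*}
all of which follow from $|\det B_T|\sim h_T^{2}$, $\|B_T^{-1}\|\sim h_T^{-1}$ and $|e|\sim h_e|\hat e|$, together with shape regularity $h_T\sim h_e$. (This is the one place where the regularity assumption on $\mathcal{T}_h$ enters, and I would mention it explicitly.)

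Next, on the reference triangle the continuous trace theorem $H^1(\hat T)\hookrightarrow L^2(\hat e)$ provides a fixed constant $\hat C$ with
\begin{align*}
\|\hat{\b v}\|_{L^2(\hat e)}^2 \leq \hat C\bigl(\|\hat{\b v}\|_{L^2(\hat T)}^2+\|\hat\nabla\hat{\b v}\|_{L^2(\hat T)}^2\bigr).
\end{align*}
Multiplying through by $h_e$ and pushing forward the three terms with the scaling identities above yields
\begin{align*}
\|\b v\|_{L^2(e)}^2 \leq C\bigl(h_e\cdot h_T^{-2}\|\b v\|_{L^2(T)}^2+h_e\|\nabla\b v\|_{L^2(T)}^2\bigr)\leq C\bigl(h_e^{-1}\|\b v\|_{L^2(T)}^2+h_e\|\nabla\b v\|_{L^2(T)}^2\bigr),
\end{align*}
where in the last step I again use $h_e\sim h_T$. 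Taking square roots gives the claim.

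I do not expect any conceptual obstacle here; the only care needed is to track the exponents of $h_T$ arising from $B_T$, $B_T^{-1}$ and the edge parametrization, and to state clearly that shape regularity (which is part of the definition of $\mathcal{T}_h$ adopted in Section~\ref{sec:Prelims}) is what allows the identification $h_e\sim h_T$ and hence produces the single parameter $h_e$ on the right-hand side.
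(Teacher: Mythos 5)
Your argument is correct and is exactly the standard scaling proof: pull back to the reference triangle, apply the trace theorem $H^1(\hat T)\hookrightarrow L^2(\hat e)$ there, and rescale using $|\det B_T|\sim h_T^2$, $\|B_T\|\sim h_T$, $|e|\sim h_e$ and shape regularity to identify $h_e\sim h_T$. The paper does not prove this lemma but simply cites \cite{BScott:2008:FEM,PAE:2012:DGbook}, where the proof given is precisely this one, so there is nothing further to reconcile.
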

\begin{lemma} (Inverse inequalities)
Let $T \in \cT_h$ and e be an edge of T. Then, it holds that for any $\b{v} \in \b{V_h}$
\begin{eqnarray}
\lVert \b{v} \rVert_{L^\infty{(e)}} \leq C h_e^{-\frac{1}{2}} \lVert \b{v} \rVert_{L^2(e)},\label{2.2}\\
\lVert \b{v} \rVert_{L^2{(e)}} \leq C h_e^{-\frac{1}{2}} \lVert \b{v} \rVert_{L^2(T)} & \forall~ T\in \mathcal{T}_h, \label{2.3}\\
\lVert \nabla \b{v} \rVert_{L^2{(T)}} \leq C h_T^{-1} \lVert \b{v} \rVert_{L^2(T)} & \forall~ T\in \mathcal{T}_h, \label{2.4}
\end{eqnarray}
where C is a constant independent of h.
\end{lemma}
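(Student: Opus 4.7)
\bigskip

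\noindent\textbf{Proof proposal for the inverse inequalities lemma.}

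The plan is to establish all three estimates by the classical scaling argument: push the situation to a fixed reference element, invoke norm equivalence on the finite-dimensional polynomial space $[P_1(\hat T)]^2$ (or $[P_1(\hat e)]^2$ on the reference edge), and then transfer the estimate back to the physical element via the standard shape-regularity bounds on the affine map.

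First I fix a reference triangle $\hat T$ with a reference edge $\hat e$ and, for each $T\in\cT_h$, the affine map $F_T:\hat T\to T$, $F_T(\hat x)=B_T\hat x+b_T$. Shape regularity of $\cT_h$ together with the usual volume/side comparisons gives $\lVert B_T\rVert\sim h_T$, $\lVert B_T^{-1}\rVert\sim h_T^{-1}$, $\lvert\det B_T\rvert\sim h_T^{2}$, and for any edge $e\subset\partial T$, $h_e\sim h_T$. The induced change of variables then yields the scaling identities
\begin{align*}
\lVert \b{v}\rVert_{L^2(T)} &\sim h_T\,\lVert \hat{\b{v}}\rVert_{L^2(\hat T)}, \\
\lVert \nabla \b{v}\rVert_{L^2(T)} &\sim \lVert \hat\nabla\hat{\b{v}}\rVert_{L^2(\hat T)}, \\
\lVert \b{v}\rVert_{L^2(e)} &\sim h_e^{1/2}\,\lVert \hat{\b{v}}\rVert_{L^2(\hat e)}, \\
\lVert \b{v}\rVert_{L^\infty(e)} &= \lVert \hat{\b{v}}\rVert_{L^\infty(\hat e)},
\end{align*}
where $\hat{\b{v}}=\b{v}\circ F_T$. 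These are routine and will be used for each of the three inequalities.

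Next, I would establish the three reference-element inequalities. On the finite-dimensional space $[P_1(\hat T)]^2$, the functionals $\hat{\b{v}}\mapsto\lVert \hat{\b{v}}\rVert_{L^2(\hat T)}$, $\hat{\b{v}}\mapsto\lVert \hat\nabla\hat{\b{v}}\rVert_{L^2(\hat T)}+\lVert\hat{\b{v}}\rVert_{L^2(\hat T)}$, $\hat{\b{v}}\mapsto\lVert\hat{\b{v}}\rVert_{L^2(\hat e)}$, and $\hat{\b{v}}\mapsto\lVert\hat{\b{v}}\rVert_{L^\infty(\hat e)}$ are all norms (or seminorms dominated by a norm) on a finite-dimensional space, hence all equivalent; in particular
\begin{align*}
\lVert \hat\nabla\hat{\b{v}}\rVert_{L^2(\hat T)} &\le \hat C_1\,\lVert \hat{\b{v}}\rVert_{L^2(\hat T)}, \\
\lVert \hat{\b{v}}\rVert_{L^2(\hat e)} &\le \hat C_2\,\lVert \hat{\b{v}}\rVert_{L^2(\hat T)}, \\
\lVert \hat{\b{v}}\rVert_{L^\infty(\hat e)} &\le \hat C_3\,\lVert \hat{\b{v}}\rVert_{L^2(\hat e)},
\end{align*}
with absolute constants $\hat C_i$. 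Combining each of these with the scaling identities above yields \eqref{2.4}, \eqref{2.3} and \eqref{2.2} respectively; in \eqref{2.3} and \eqref{2.2} the factor $h_e^{-1/2}$ emerges from the ratio of the edge and interior (or two edge) scaling powers, and in \eqref{2.4} the factor $h_T^{-1}$ from the ratio of the gradient and $L^2$ scalings.

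I do not foresee any serious obstacle: the only technical point to watch is that $h_e\sim h_T$ uniformly across the family $\cT_h$, which is exactly shape regularity. A cleaner exposition may verify the norm equivalences one at a time, making explicit that in \eqref{2.4} the bound $\lVert\hat\nabla\hat{\b{v}}\rVert_{L^2(\hat T)}\le\hat C_1\lVert\hat{\b{v}}\rVert_{L^2(\hat T)}$ really is valid on $[P_1(\hat T)]^2$ because the gradient of a polynomial of bounded degree is controlled by its coefficients, which are in turn controlled by its $L^2$-norm on the fixed domain $\hat T$.
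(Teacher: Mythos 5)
Your argument is correct: the scaling identities, the norm equivalences on the finite-dimensional reference spaces, and the use of $h_e\sim h_T$ from shape regularity combine exactly as you indicate to give \eqref{2.2}--\eqref{2.4}. The paper itself offers no proof of this lemma, citing only the standard references, and the scaling-to-the-reference-element argument you give is precisely the proof found there, so there is nothing to add.
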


\subsection{Enriching Operator}
An enriching map $E_h:\b{V_h}\rightarrow
\b{V_c}$ plays a crucial role in deriving a posteriori error estimates for the class of discontinuous Galerkin
methods as it maps non-conforming function to conforming function \cite{Brenner:1996:SchwarzNC, Brenner:1999:multigridNC,
Brenner:2001:Poincare,
Brenner::2004:Korn}.
\par
\noindent As we know, that any function in $\b{V_c}$ is uniquely
determined by the nodal values at the vertices $\cV_h$ of $\cT_h$, therefore, 
for $\b{v_h}\in \b{V_h}$, we define $E_h\b{v_h}\in \b{V_c}$ by averaging as follows:
\smallbreak
$E_h \b{v_h}(p)$  $=$
$\begin{cases}
\dfrac{1}{|\mathcal{T}_p|}\sum_{T \in \mathcal{T}_p } \b{v_h}|_T(p)  & \text{for }p \in \mathcal{V}_h ^F \cup \mathcal{V}_h ^i \cup \mathcal{V}_h ^C, \\ 
  0& \text{for }p \in \mathcal{V}_h ^D.  
\end{cases}$\\
where $\lvert \mathcal{T}_p \rvert$ denotes the cardinality of $\mathcal{T}_p$.
\par
In the following lemma, we state the approximation properties of smoothing map $E_h$ \cite{TG:2016:VIDG1, KPorwal:2017:Tresca}.
\begin{lemma} \label{lem:EO}  It holds that
    \begin{align*}
    \sum_{T \in \mathcal{T}_h}\Big(h_T^{-2}\| E_h\b{v}-\b{v}\|_{L^2(T)}^2 + \| \nabla({E_h\b{v}-\b{v}})\|_{L_2(T)}^2 \Big)\leq C \Bigg( \sum_{e \in \mathcal{E}_h}\dfrac{1}{h_e}\|\sjump{\b{v}}\|_{0,e}^2\bigg) \quad \forall \b{v}\in \b{V_h}.
    \end{align*}

\end{lemma}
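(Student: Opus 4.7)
The plan is to prove the estimate vertex-by-vertex, following the well-known template for enriching/averaging operators (see e.g. Brenner and the references the paper cites), and then sum using shape regularity.

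First, I would reduce the two norms on the left to a weighted sum of squared nodal differences. Since $E_h\b{v}-\b{v}$ is a (vector-valued) linear polynomial on each $T\in\cT_h$, a standard scaling argument on the reference triangle gives
\begin{align*}
h_T^{-2}\|E_h\b{v}-\b{v}\|_{L^2(T)}^2 + \|\nabla(E_h\b{v}-\b{v})\|_{L^2(T)}^2
\;\le\; C\sum_{p\in\cV_T} |(E_h\b{v})(p)-\b{v}|_T(p)|^2.
\end{align*}
So everything reduces to estimating the nodal jumps $|(E_h\b{v})(p)-\b{v}|_T(p)|$.

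Next, I would bound each such nodal jump by edge jumps. Fix $T$ and $p\in\cV_T$, and split into cases based on the definition of $E_h$. \emph{Case 1: $p\in\cV_h^i\cup\cV_h^F\cup\cV_h^C$.} Then
\[
(E_h\b{v})(p)-\b{v}|_T(p)=\frac{1}{|\cT_p|}\sum_{T'\in\cT_p}\bigl(\b{v}|_{T'}(p)-\b{v}|_T(p)\bigr),
\]
and each $T'\in\cT_p$ can be connected to $T$ by a chain of a uniformly bounded (by shape regularity) number of elements in $\cT_p$ sharing common edges through $p$. Writing the difference as a telescoping sum over this chain and using that for two elements $T_1,T_2\in\cT_p$ sharing an edge $e$ one has $\b{v}|_{T_1}(p)-\b{v}|_{T_2}(p)=\sjump{\b{v}}_{\text{value}}(p)$ with $|\b{v}|_{T_1}-\b{v}|_{T_2}|^2\le 2|\sjump{\b{v}}|^2$ on $e$ (a short computation using $\b{n}^-=-\b{n}^+$ in the definition of $\sjump{\b{v}}$), the discrete trace/inverse inequalities \eqref{2.2}--\eqref{2.3} give
\[
|\b{v}|_{T_1}(p)-\b{v}|_{T_2}(p)|
\le \|\b{v}|_{T_1}-\b{v}|_{T_2}\|_{L^\infty(e)}
\le C h_e^{-1/2}\|\sjump{\b{v}}\|_{L^2(e)}.
\]
\emph{Case 2: $p\in\cV_h^D$.} Now $(E_h\b{v})(p)=\b{0}$, and since the boundary-edge jump $\sjump{\b{v}}$ on $e\in\cE_h^D$ is (up to a constant) $\b{v}|_T$ itself, the same inverse-inequality argument (summing over a chain of at most shape-regular many edges in $\cE_h^i\cup\cE_h^D$ inside the patch of $p$) yields $|\b{v}|_T(p)|^2\le C\sum_{e}h_e^{-1}\|\sjump{\b{v}}\|_{L^2(e)}^2$, where the sum is over the chain edges.

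Finally I would assemble the pieces: squaring, applying Cauchy--Schwarz to the finite chains (whose lengths are uniformly bounded by shape regularity), and then summing over $T\in\cT_h$ and $p\in\cV_T$. Each edge $e\in\cE_h$ appears in only finitely many such sums (again by shape regularity), so
\[
\sum_{T\in\cT_h}\sum_{p\in\cV_T}|(E_h\b{v})(p)-\b{v}|_T(p)|^2
\;\le\; C\sum_{e\in\cE_h}\frac{1}{h_e}\|\sjump{\b{v}}\|_{L^2(e)}^2,
\]
which together with the first reduction is the claim.

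The main obstacle I anticipate is purely bookkeeping: tracking the chains of elements used to move from $T$ to an arbitrary $T'\in\cT_p$ (and, for $p\in\cV_h^D$, from $T$ to a Dirichlet edge in the patch of $p$) while making sure the resulting counting factors are controlled by shape regularity independently of $h$. The nontrivial analytic input is just the equivalence $|\b{v}^+-\b{v}^-|^2\lesssim|\sjump{\b{v}}|^2$ on each edge (which handles the tensorial definition of the jump used in the paper) together with the standard inverse/trace inequalities \eqref{2.2}--\eqref{2.3}.
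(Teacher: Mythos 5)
Your argument is correct and is precisely the standard nodal-averaging proof (scaling to nodal values, chaining element-to-element differences around each vertex via the jump identity $|\b{v}^+-\b{v}^-|^2\le 2|\sjump{\b{v}}|^2$, and counting by shape regularity) that the paper itself relies on by citation to \cite{TG:2016:VIDG1, KPorwal:2017:Tresca} and the Brenner references, since no proof is given in the text. The one point worth making explicit in a write-up is that for $p\in\mathcal{V}_h^D$ the chain must terminate at an edge of $\mathcal{E}_h^D$ containing $p$, which exists because the mesh is assumed to resolve the boundary decomposition; otherwise everything is in order.
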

\section{Discrete Problem} \label{sec:DP}
\subsection{DG Formulations}
In the following subsection, we present DG formulations for solving the quasi-variational inequality \eqref{1.10}. 
In \cite{Xaio:2018:DGNC} several DG methods have been considered for the frictional problem with normal compliance for which the bilinear form $B_h(\cdot, \cdot)$ are listed below. Let $r_0$ and $r_e$ denote the global and local lifting operators, respectively \cite{ABCM:2002:UnifiedDG,WHC:2011:DGSP}. Further, in defining the bilinear forms, we use the shorter notations $(\b{w},\b{v})_{\O},~ \langle \b{w},\b{v}\rangle_{\cE^o_h}$ and $g$ instead of $\int_\O \b{wv}~dx,~\int_{\cE^o_h }\b{wv}~ds$ and $~ \int_\O\b{\sigma}_h(\b{u_h}):\b{\epsilon}_h(\b{v_h})~dx$ respectively. 
\smallbreak
1. \textbf{SIPG method} \cite{Xaio:2018:DGNC, WHC:2011:DGSP, Arnold:1982:IPD}:
\begin{align*}
B_{h}^{(1)}(\b{u_h},\b{v_h}) &= g - \langle \sjump{\b{u_h}},\smean{\b{\sigma}_h(\b{v_h})}\rangle -  \langle \sjump{\b{v_h}},\smean{\b{\sigma}_h(\b{u_h})}\rangle + \int_{\cE_h^o}\eta h_e^{-1}\sjump{\b{u_h}}:\sjump{\b{v_h}}~ds,
\end{align*}
for $\b{u_h}, \b{v_h} \in \b{V_h}$ and $\eta \geq \eta_o>0.$
\bigbreak
2. \textbf{NIPG method} \cite{WHC:2011:DGSP, Xaio:2018:DGNC}:
\begin{align*}
B_{h}^{(2)}(\b{u_h},\b{v_h}) &= g + \langle \sjump{\b{u_h}},\smean{\b{\sigma}_h(\b{v_h})}\rangle -  \langle \sjump{\b{v_h}},\smean{\b{\sigma}_h(\b{u_h})}\rangle + \int_{\cE_h^o}\eta h_e^{-1}\sjump{\b{u_h}}:\sjump{\b{v_h}}~ds,
\end{align*} 
for $\b{u_h}, \b{v_h} \in \b{V_h}$ and $\eta > 0.$
\bigbreak
3.  \textbf{Bassi et al.} \cite{WHC:2011:DGSP, Xaio:2018:DGNC}:
\begin{align*}
B_{h}^{(3)}(\b{u_h},\b{v_h}) &= g - \langle \sjump{\b{u_h}},\smean{\b{\sigma}_h(\b{v_h})}\rangle -  \langle \sjump{\b{v_h}},\smean{\b{\sigma}_h(\b{u_h})}\rangle+ \sum_{e \in \cE^o_h}\int_\O \eta C\b{r}_e(\sjump{\b{u_h}}):\b{r}_e(\sjump{\b{v_h}})~dx,
\end{align*} 
for $\b{u_h}, \b{v_h} \in \b{V_h}$ and $\eta > 3.$
\bigbreak
4.     \textbf{Brezzi et al.} \cite{ABCM:2002:UnifiedDG, WHC:2011:DGSP, BMMPR:2000:BrezziDG}:
\begin{align*}
B_{h}^{(4)}(\b{u_h},\b{v_h}) &= g - \langle \sjump{\b{u_h}},\smean{\b{\sigma}_h(\b{v_h})}\rangle -  \langle \sjump{\b{v_h}},\smean{\b{\sigma}_h(\b{u_h})}\rangle + (C{r}_0(\b{\sjump{\b{u_h}})},r_0(\sjump{\b{v_h}}))\\&+\sum_{e \in \cE^o_h}\int_\O \eta C\b{r}_e(\sjump{\b{u_h}}):\b{r}_e(\sjump{\b{v_h}})~dx,
\end{align*} 
for $\b{u_h}, \b{v_h} \in \b{V_h}$ and $\eta > 0.$
\bigbreak
5. \textbf{LDG Method}  \cite{CCPS:2000:LDG,Chen::2010:ldg}:
\begin{align*}
B_{h}^{(5)}(\b{u_h},\b{v_h}) &= g - \langle \sjump{\b{u_h}},\smean{\b{\sigma}_h(\b{v_h})}\rangle -  \langle \sjump{\b{v_h}},\smean{\b{\sigma}_h(\b{u_h})}\rangle + (C{r}_0(\b{\sjump{\b{u_h}})},r_0(\sjump{\b{v_h}}))\\&+\int_{\cE_h^o}\eta h_e^{-1}\sjump{\b{u_h}}:\sjump{\b{v_h}}~ds,
\end{align*}   
for $\b{u_h}, \b{v_h} \in \b{V_h}$ and $\eta > 0.$
\par
Let $B_h(\cdot, \cdot)$ represents one of the five bilinear form $B^{(i)}_h(\cdot, \cdot),~ 1\leq i\leq 5$. Then, the corresponding discrete formulation of the model problem $\eqref{1.10}$ is to find $\b{u_h} \in \b{V_h}$ such that 
    \begin{align}\label{4.1}
    B_h(\b{u_h,v_h-u_h})+j_n(\b{u_h,v_h-u_h})+j_\tau(\b{v_h})-j_\tau(\b{u_h})\geq (\b{f,v_h-u_h}) \quad \forall~ \b{v_h} \in \b{V_h}.
    \end{align}
where we rewrite the bilinear form $B_h(\cdot,\cdot)$ as
    \begin{align*}
     B_h(\b{u_h,v_h})= a_h(\b{u_h,v_h}) +b_h(\b{u_h,v_h}),
     \end{align*}
where
    \begin{align*}
    a_h(\b{u_h,v_h})=\int _\Omega \b{\sigma}_h(\b{u_h})\colon \b{\epsilon}_h(\b{v_h})~dx
    \end{align*}
and bilinear form $b_h(\cdot,\cdot)$ consists of all the remaining terms that accounts for consistency and stability. 
A key observation is that the bilinear form $b_h(\cdot,\cdot)$ for all the DG methods (1) - (5) satisfies the following estimate:
\begin{align}\label{4.2}
\lvert b_h(\b{w},\b{v})\rvert \leq C \bigg(\sum_{e \in \mathcal{E}_h^o}\int_{e}\frac{1}{h_e}\sjump{\b{w}}^2~ds\bigg)^{1/2}\lvert\b{v}\rvert_{H^1(\Omega)}~\forall \b{w} \in \b{V_h},~ \b{v} \in \b{V_c}.
\end{align}
    Define norm $\norm{ \cdot }_h$ on the space $\b{V_h}$ as 
    \begin{align*}
    \norm{\b{v}}_h^2=\mid \b{v}\mid^2_h+\mid \b{v}\mid_*^2,
    \end{align*}
where
    \begin{align*}
    \mid \b{v} \mid^2_h = \sum_{T\in\mathcal{T}_h} \mid \b{v}\mid_T^2~,
~~~~~~~~
    \mid \b{v} \mid^2_* = \sum_{e\in\mathcal{E}_h^0} h_e^{-1}\| \sjump{\b{v}}\|_{0,e}^2
    \end{align*}
with
    \begin{align*}
    \mid \b{v} \mid^2_T = \int_{T} C\b{\epsilon}(\b{v}):\b{\epsilon}(\b{v})~dx,~~~~
    \| \sjump{\b{v}}\|_{0,e}^2= \int_e\sjump{\b{v}}:\sjump{\b{v}}~ds.
\end{align*}
Note, the norm $\norm{\cdot}_h$ is equivalent to usual DG norm
$\|\b{v}\|^2_{1,h}+\mid \b{v}\mid^2_*$ by Korn's inequality and Poincar$\acute{e}$ Fredrichs inequality for piece wise $H^1$ spaces \cite{Brenner:2001:Poincare, Brenner::2004:Korn}.
\par
\noindent
The existence and uniqueness of the discrete problem \eqref{4.1} is discussed in \cite{Xaio:2018:DGNC}.
Analogous to the continuous problem, following is the characterization of the discrete problem \eqref{4.1}.
\begin{lemma}\label{ref:Dchar}
There exists a unique Lagrange multiplier $\b{\lambda_{h\tau}} \in \Lambda $ such that the solution $\b{u_h}$ of the discrete problem \eqref{4.1} can be characterized by
     \begin{align}
     B_h(\b{u_h, v_h})+j_n(\b{u_h, v_h})+g(\b{\lambda_{h\tau}, v_{h}})=&\b{(f,v_h})~~ \quad \forall \b{v_h} \in \b{V_h}, \label{4.3}\\
     \b{\lambda_{h\tau}}\cdot\b{u_{h\tau}}=&|\b{u_{h\tau}}| ~~a.e~~ on~ \Gamma_C. \label{4.4}
     \end{align}
\end{lemma}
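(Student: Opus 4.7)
The plan is to mirror the argument used for the continuous Lemma \ref{ref:Cchar}, adapting it to the discrete setting by exploiting the variational inequality \eqref{4.1} with carefully chosen test functions, a Hahn--Banach extension, and duality to extract $\b{\lambda_{h\tau}}$.

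First I would introduce the linear functional
\begin{align*}
\Phi(\b{w_h}) := (\b{f},\b{w_h}) - B_h(\b{u_h},\b{w_h}) - j_n(\b{u_h},\b{w_h}), \qquad \b{w_h}\in\b{V_h}.
\end{align*}
Plugging $\b{v_h}=\b{u_h}\pm\b{w_h}$ into \eqref{4.1} and using the sublinearity (indeed convexity) of $j_\tau$ together with $j_\tau(-\b{w_h})=j_\tau(\b{w_h})$ yields the two-sided bound $|\Phi(\b{w_h})|\le j_\tau(\b{w_h})=\int_{\Gamma_C}c_\tau|\b{w_{h\tau}}|\,ds$. In particular $\Phi(\b{w_h})$ depends only on the tangential trace $\b{w_{h\tau}}|_{\Gamma_C}$, so $\Phi$ descends to a continuous linear form on the finite-dimensional subspace $\{\b{w_{h\tau}}|_{\Gamma_C}:\b{w_h}\in\b{V_h}\}$ of $[L^1(\Gamma_C;c_\tau\,ds)]^2$, with norm bounded by $1$.

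Next I would apply the Hahn--Banach theorem to extend $\Phi$ to a norm-one linear functional on all of $[L^1(\Gamma_C;c_\tau\,ds)]^2$, and then use the Riesz representation of the dual of $L^1$ to obtain $\b{\lambda_{h\tau}}\in[L^\infty(\Gamma_C)]^2$ with $|\b{\lambda_{h\tau}}|\le1$ a.e., i.e.\ $\b{\lambda_{h\tau}}\in\Lambda$, such that $\Phi(\b{w_h})=g(\b{\lambda_{h\tau}},\b{w_h})$ for every $\b{w_h}\in\b{V_h}$. This establishes \eqref{4.3}. For \eqref{4.4}, I would test \eqref{4.1} with $\b{v_h}=\b{0}$ and $\b{v_h}=2\b{u_h}$; the two resulting opposite inequalities force the equality $\Phi(\b{u_h})=j_\tau(\b{u_h})$, i.e.
\begin{align*}
\int_{\Gamma_C}c_\tau\,\bigl(\b{\lambda_{h\tau}}\cdot\b{u_{h\tau}}-|\b{u_{h\tau}}|\bigr)\,ds=0.
\end{align*}
Since $\b{\lambda_{h\tau}}\cdot\b{u_{h\tau}}\le|\b{\lambda_{h\tau}}||\b{u_{h\tau}}|\le|\b{u_{h\tau}}|$ and $c_\tau\ge0$, the integrand is non-positive and its vanishing integral forces pointwise equality $c_\tau>0$-a.e.; where $c_\tau=0$ I can redefine $\b{\lambda_{h\tau}}$ to equal $\b{u_{h\tau}}/|\b{u_{h\tau}}|$ (or any unit vector where $\b{u_{h\tau}}=\b{0}$) without affecting \eqref{4.3}, producing \eqref{4.4} a.e.\ on $\Gamma_C$.

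The main obstacle I expect is the \emph{uniqueness} claim. Unlike the continuous case, the tangential traces of $\b{V_h}$ span only a finite-dimensional subspace of $L^1(\Gamma_C)$, so \eqref{4.3} alone does not pin $\b{\lambda_{h\tau}}$ down a.e. The strategy is to use the saturation condition \eqref{4.4} as the missing constraint: on the set where $|\b{u_{h\tau}}|>0$, the inequality $\b{\lambda_{h\tau}}\cdot\b{u_{h\tau}}\le|\b{u_{h\tau}}|$ combined with $|\b{\lambda_{h\tau}}|\le1$ forces $\b{\lambda_{h\tau}}=\b{u_{h\tau}}/|\b{u_{h\tau}}|$; on the complementary set, any two candidates $\b{\lambda_{h\tau}},\b{\lambda_{h\tau}'}$ satisfying \eqref{4.3} must agree against every tangential trace of $\b{V_h}$, and this together with the pointwise normalization on the ``sliding'' set determines the multiplier uniquely in the natural quotient. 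This is the argument already used in \cite{HW:2002:FourthVI,KPorwal:2017:Tresca}, which I would cite and adapt.
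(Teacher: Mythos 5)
Your existence argument is correct and is essentially the intended route: the paper offers no proof beyond observing that $j_n(\b{u_h},\cdot)$ is linear in its second slot and citing Lemma 3.1 of \cite{KPorwal:2017:Tresca}, and that reference runs exactly your scheme --- the two-sided bound $|\Phi(\b{w_h})|\le j_\tau(\b{w_h})$ obtained by testing \eqref{4.1} with $\b{v_h}=\b{u_h}\pm\b{w_h}$, the Hahn--Banach extension in $[L^1(\Gamma_C;c_\tau\,ds)]^2$ followed by $L^1$--$L^\infty$ duality to produce $\b{\lambda_{h\tau}}\in\Lambda$ satisfying \eqref{4.3}, and the choices $\b{v_h}=\b{0}$ and $\b{v_h}=2\b{u_h}$ to force $\Phi(\b{u_h})=j_\tau(\b{u_h})$ and hence the saturation identity \eqref{4.4} (after the harmless redefinition on $\{c_\tau=0\}$). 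All of these steps check out.

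The genuine soft spot is uniqueness, which you have diagnosed correctly but not resolved. On $\{|\b{u_{h\tau}}|>0\}$ the equality case of Cauchy--Schwarz does force $\b{\lambda_{h\tau}}=\b{u_{h\tau}}/|\b{u_{h\tau}}|$, but \eqref{4.3} constrains $c_\tau\b{\lambda_{h\tau}}$ only against the \emph{finite-dimensional} space of tangential traces of $\b{V_h}$ on $\Gamma_C$. On a positive-measure subset where $\b{u_{h\tau}}=\b{0}$, $c_\tau>0$ and $|\b{\lambda_{h\tau}}|\le 1-\epsilon$, one may add any bounded perturbation supported there that is $L^2(c_\tau\,ds)$-orthogonal to that finite-dimensional trace space (such perturbations exist, since the orthogonality imposes only finitely many conditions on an infinite-dimensional space) without violating \eqref{4.3}, \eqref{4.4} or the constraint $|\b{\lambda_{h\tau}}|\le 1$. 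So uniqueness in all of $\Lambda$, as literally stated, does not follow from your construction and in general appears to fail; what is determined is the functional $g(\b{\lambda_{h\tau}},\cdot)$ on $\b{V_h}$ together with the pointwise values on the sliding set --- precisely your ``quotient'' uniqueness. Since the paper asserts the lemma only by reference, this is as much an issue with the statement as with your proof, but to close the gap you would need either to seek the multiplier in a discrete multiplier set adapted to $\b{V_h}$ (where \eqref{4.3} pins it down) or to weaken ``unique'' to uniqueness of $g(\b{\lambda_{h\tau}},\cdot)$ on $\b{V_h}$; as written, the uniqueness claim is not proved.
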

Since $j_n(\b{u_h,v_h})$ is linear in the second component, henceforth the proof of the last lemma follows using the similar arguments as in Lemma 3.1 of \cite{KPorwal:2017:Tresca}.
\par\noindent
As in the case of continuous solution $\b{u}$ of $\eqref{1.10}$, the discrete solution $\b{u_h}$ of $\eqref{4.1}$ is also uniformly bounded by load vectors.
\begin{lemma}\label{lem:uhbound}  Let $\b{u_h} \in \b{V_h}$ be the solution of the discrete  problem. Then
       \begin{align*}
      \|\b{u_h}\|_{1,h}\leq C (\|\b{f}\|_{L^2(\Omega)}+\|\b{g}\|_{L^2(\Gamma_F)})  
       \end{align*}
where $C$ is a constant independent of $h$.
\end{lemma}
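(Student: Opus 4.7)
The plan is to test the discrete variational inequality \eqref{4.1} against $\b{v_h} = \b{0}$, which is an admissible choice since $\b{V_h}$ is a linear space containing $\b{0}$. This gives
\begin{align*}
-B_h(\b{u_h},\b{u_h}) - j_n(\b{u_h},\b{u_h}) + j_\tau(\b{0}) - j_\tau(\b{u_h}) \geq -(\b{f},\b{u_h}),
\end{align*}
or equivalently,
\begin{align*}
B_h(\b{u_h},\b{u_h}) + j_n(\b{u_h},\b{u_h}) + j_\tau(\b{u_h}) \leq (\b{f},\b{u_h}),
\end{align*}
since $j_\tau(\b{0}) = 0$. The next step is to discard the two friction/compliance contributions from the left-hand side using their non-negativity: $j_\tau(\b{u_h}) = \int_{\Gamma_C} c_\tau |\b{u_{h\tau}}|\,ds \geq 0$ because $c_\tau \geq 0$, and similarly $j_n(\b{u_h},\b{u_h}) = \int_{\Gamma_C} c_n (u_{hn}-g_a)_+^{m_n} u_{hn}\,ds \geq 0$ because the integrand vanishes when $u_{hn} < g_a$ and is a product of nonnegative quantities otherwise (since $g_a > 0$ forces $u_{hn} > 0$ whenever $(u_{hn}-g_a)_+ > 0$).

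The central analytic ingredient is coercivity of $B_h(\cdot,\cdot)$ with respect to the DG norm $\norm{\cdot}_h$, which holds uniformly for each of the five families listed in Section \ref{sec:DP} under the stated conditions on the stabilization parameter $\eta$; this is established in \cite{Xaio:2018:DGNC}. Combining coercivity with the inequality above yields $C \norm{\b{u_h}}_h^2 \leq (\b{f},\b{u_h})$. It then remains to bound the right-hand side by the desired data norm, using
\begin{align*}
(\b{f},\b{u_h}) \leq \|\b{f}\|_{L^2(\Omega)} \|\b{u_h}\|_{L^2(\Omega)} + \|\b{g}\|_{L^2(\Gamma_F)} \|\b{u_h}\|_{L^2(\Gamma_F)},
\end{align*}
followed by a broken Poincaré--Friedrichs inequality to control $\|\b{u_h}\|_{L^2(\Omega)}$ by $\norm{\b{u_h}}_h$ and a broken trace inequality to control $\|\b{u_h}\|_{L^2(\Gamma_F)}$ by $\norm{\b{u_h}}_h$; both are available for piecewise $H^1$ functions on $\b{V_h}$ as noted via the equivalence result of \cite{Brenner:2001:Poincare,Brenner::2004:Korn}.

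Cancelling one factor of $\norm{\b{u_h}}_h$ and invoking the equivalence of $\norm{\cdot}_h$ with $\|\cdot\|_{1,h}^2 + |\cdot|_*^2$ (which uses broken Korn and Poincaré--Friedrichs) produces the stated bound on $\|\b{u_h}\|_{1,h}$. The principal subtlety is the coercivity of $B_h$ rather than the data estimate itself: for the methods that involve lifting operators ($B_h^{(3)}, B_h^{(4)}, B_h^{(5)}$) and for NIPG ($B_h^{(2)}$) the proof of coercivity differs from the SIPG case, so the cleanest exposition is to quote the unified coercivity statement from \cite{Xaio:2018:DGNC} rather than re-derive it method by method. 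Everything else is a short composition of Cauchy--Schwarz with the broken Sobolev inequalities already recorded.
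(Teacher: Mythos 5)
Your argument is correct and is exactly the standard route the paper intends when it says the lemma ``can be proved on the same lines as in Theorem 2.3 of Han (1993)'': test \eqref{4.1} with $\b{v_h}=\b{0}$, drop $j_\tau(\b{u_h})\ge 0$ and $j_n(\b{u_h},\b{u_h})\ge 0$ (the latter using $g_a>0$, or equivalently the monotonicity identity with $y=0$), invoke the uniform coercivity of $B_h$ from \cite{Xaio:2018:DGNC}, and close with broken Poincar\'e--Friedrichs and trace inequalities. No gaps; this matches the paper's (cited) proof.
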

This lemma can be proved on the same lines as in Theorem 2.3 of \cite{Han:1993:FCPNC}.
\section{A posteriori error analysis} \label{sec:Apost}
In this section, we derive a residual-type estimator for the error $\norm{\b{u-u_h}}_h$ and study a posteriori error analysis. The error estimators are defined by
    \begin{align*}
    \eta_1^2 =& \sum_{T \in \mathcal{T}_h} h_T^2\|\b{f}\|^2_{L^2(T)},\\
    \eta_2^2 =& \sum_{e \in \mathcal{E}_h^i} h_e\|\sjump{\b{\sigma}_h(\b{u_h})}~ \|^2_{L^2(e)},\\
    \eta_3^2 =& \sum_{e \in \mathcal{E}_h^0} \frac{\eta}{h_e}\|\sjump{\b{u_h}}\|^2_{L^2(e)},\\
    \eta_4^2 =& \sum_{e \in \mathcal{E}_h^C} h_e\|\b{\sigma}_{h\tau}(\b{u_h})+  c_{\tau}\b{\lambda_{h\tau}}\|^2_{L^2(e)},\\
    \eta_5^2 =& \sum_{e \in \mathcal{E}_h^F} h_e\|\b{\sigma}_h(\b{u_h})\b{n}-\b{g}\|^2_{L^2(e)},\\
    \eta_6^2=& \sum_{e \in \mathcal{E}_h^C} h_e\|\b{\sigma}_{hn}(\b{u_h})+c_n(u_{hn}-g_a)^{m_n}_+\|^2_{L^2(e)}.
    \end{align*}
The total residual estimator $\eta_h$ is defined by 
    \begin{align*}
    \eta_h^2=\eta_1^2+\eta_2^2+\eta_3^2+\eta_4^2+\eta_5^2+\eta_6^2.
    \end{align*}
We will use the following integration by parts formula in the subsequent analysis:
\begin{align*}
   \int_{\Omega} \b{\sigma_h}(\b{w}):\b{\epsilon_h}(\b{v})~dx 
&=-\int_{\Omega}\b{div_h}~\b{\sigma}_h(\b{w})\cdot \b{v}~dx + \sum_{e\in \mathcal{E}_h^i}\int_{e}\sjump{\b{\sigma}_h(\b{w})}\cdot\smean{\b{v}}~ds +  \sum_{~e\in \mathcal{E}_h \ }\int_{e}\smean{\b{\sigma}_h(\b{w})}: \sjump{\b{v}}~ds
\end{align*}
for all $\b{v}$, $\b{w}$ $\in$ $[H^1(\Omega, \mathcal{T}_h)]^2$.
\par
Next, we establish the reliability of the error estimator $\eta_h$.
\subsection{Reliability Estimates}
In the following subsection, we derive the  upper bound for the discretization error by error estimator $\eta_h$. 
\begin{theorem} Let $\b{u}$ and $\b{u_h}$ be the solution of \eqref{1.10} and \eqref{4.1}, respectively. Then, there exist a positive constant $C$ independent of $h$ s.t. 
$$\norm{\b{u-u_h}}^2_h +\sum_{e\in \mathcal{E}_h^C}h_e\|{\sigma}_n(\b{u_h-u})\|^2_{L^2(e)}\leq C\bigg(\eta_h^2 + \sum_{e \in \mathcal{E}_h^C}h_e\|c_\tau\|^2_{L^2(e)}\bigg).$$
\end{theorem}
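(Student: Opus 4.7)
The plan is to use the enriching operator from Section 2 to split the total error into conforming and nonconforming parts, and then to exploit coercivity of $a(\cdot,\cdot)$ together with the two Lagrange multiplier characterizations (Lemma \ref{ref:Cchar} and Lemma \ref{ref:Dchar}) to expose the residual estimators.

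First I would write $\b{u}-\b{u_h} = (\b{u}-E_h\b{u_h}) + (E_h\b{u_h}-\b{u_h}) =: \b{e} + \b{\delta}$. Because $E_h\b{u_h}\in \b{V_c}\subset \b{V}$, we have $\sjump{E_h\b{u_h}}=0$ on every $e\in \cE_h^o$, so $|\b{\delta}|_*^2 = |\b{u_h}|_*^2$; combined with Lemma \ref{lem:EO} this gives $\norm{\b{\delta}}_h^2 \leq C\eta_3^2$ immediately. The task therefore reduces to bounding $\norm{\b{e}}_h = \|\b{e}\|_{H^1(\Omega)}$ (using Korn and the fact that $\b{e}\in\b{V}$) and the LHS boundary term.

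For the conforming part, I would start from $C\|\b{e}\|^2_{H^1(\Omega)} \le a(\b{e},\b{e}) = a(\b{u},\b{e}) - a_h(\b{u_h},\b{e}) - a_h(E_h\b{u_h}-\b{u_h},\b{e})$, replacing $a(\b{u},\b{e})$ by $(\b{f},\b{e})-j_n(\b{u},\b{e})-g(\b{\lambda_\tau},\b{e})$ via Lemma \ref{ref:Cchar}, and applying the integration-by-parts formula stated just before the theorem to $a_h(\b{u_h},\b{e})$. Since $\b{e}\in\b{V}$ kills $\sjump{\b{e}}$ on interior edges and on $\Gamma_D$, and since $\b{\mathrm{div}}_h\b{\sigma}_h(\b{u_h})=0$ for piecewise $P_1$ stresses, only the volumetric load term, interior edge jumps $\sjump{\b{\sigma}_h(\b{u_h})}$, and the boundary tractions on $\Gamma_F\cup\Gamma_C$ survive. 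On $\Gamma_F$ I would add and subtract $\b{g}$ to produce $\eta_5$, on $\Gamma_C$ (normal part) add and subtract $c_n(u_{hn}-g_a)_+^{m_n}$ to produce $\eta_6$, and on $\Gamma_C$ (tangential part) add and subtract $c_\tau\b{\lambda_{h\tau}}$ to produce $\eta_4$. The remaining volumetric and jump terms are treated in the standard way: introduce a Clement quasi-interpolant $\b{e}_I\in\b{V_c}$ of $\b{e}$, use the discrete characterization \eqref{4.3}--\eqref{4.4} (tested against $\b{e}_I\in\b{V_h}$, together with \eqref{4.2} to control $b_h(\b{u_h},\b{e}_I)$) to introduce a Galerkin-type cancellation, and then combine Lemma \ref{lem:Interpolation} with the trace inequality \eqref{2.1} to extract $\eta_1$, $\eta_2$, $\eta_3$, with the $\|\b{e}\|_{H^1}$ factors absorbed by Young's inequality on the left. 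The term $a_h(E_h\b{u_h}-\b{u_h},\b{e})$ is handled directly by Cauchy--Schwarz and Lemma \ref{lem:EO}.

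The main obstacles are the two nonsmooth boundary contributions. For the friction residual, after extracting $\eta_4$ I am left with $\int_{\Gamma_C} c_\tau(\b{\lambda_{h\tau}}-\b{\lambda_\tau})\cdot\b{e}_\tau\,ds$; since only $|\b{\lambda_\tau}|,|\b{\lambda_{h\tau}}|\le 1$ is available, I would bound the integrand edgewise by $2c_\tau|\b{e}_\tau|$ and apply a weighted Young's inequality $2\|c_\tau\|_{L^2(e)}\|\b{e}_\tau\|_{L^2(e)} \le \varepsilon h_e^{-1}\|\b{e}_\tau\|_{L^2(e)}^2 + \varepsilon^{-1} h_e\|c_\tau\|_{L^2(e)}^2$; the first piece is absorbed via the trace inequality into $\|\b{e}\|_{H^1(\Omega)}^2$, and the second piece produces exactly the extra $\sum_{e\in\cE_h^C} h_e\|c_\tau\|^2_{L^2(e)}$ appearing on the right-hand side. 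For the normal-compliance residual, after extracting $\eta_6$ I would rewrite $\sigma_{hn}(\b{u_h})-\sigma_n(\b{u}) = [\sigma_{hn}(\b{u_h})+c_n(u_{hn}-g_a)_+^{m_n}] - c_n[(u_{hn}-g_a)_+^{m_n}-(u_n-g_a)_+^{m_n}]$ and use the monotonicity of $x\mapsto (x-g_a)_+^{m_n}$ together with an edgewise weighted Young's inequality to separate (a) a multiple of $\sum_{e\in\cE_h^C}h_e\|\sigma_n(\b{u_h}-\b{u})\|^2_{L^2(e)}$, which I would move to the left, (b) a contribution controllable by $\eta_6$, and (c) a residual of $\|\b{e}\|_{H^1}$-type that is absorbed. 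Putting all the pieces together and choosing $\varepsilon$ small enough yields the claimed reliability bound.
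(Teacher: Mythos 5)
Your overall architecture --- splitting off the nonconforming part with $E_h$ and Lemma \ref{lem:EO}, using coercivity of $a(\cdot,\cdot)$ on $\b{e}=\b{u}-E_h\b{u_h}$, invoking both Lagrange-multiplier characterizations, integrating by parts and inserting a Cl\'ement interpolant to expose $\eta_1,\dots,\eta_6$ --- is exactly the paper's. The genuine gap is in your treatment of the friction term. After extracting $\eta_4$ you are left with $\int_{\Gamma_C}c_\tau(\b{\lambda_{h\tau}}-\b{\lambda_\tau})\cdot\b{e}_\tau\,ds$, and you propose the edgewise weighted Young inequality $2\|c_\tau\|_{L^2(e)}\|\b{e}_\tau\|_{L^2(e)}\le \varepsilon h_e^{-1}\|\b{e}_\tau\|^2_{L^2(e)}+\varepsilon^{-1}h_e\|c_\tau\|^2_{L^2(e)}$, claiming the first piece is absorbed into $\|\b{e}\|^2_{H^1(\Omega)}$ via the trace inequality. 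That absorption fails: \eqref{2.1} gives $h_e^{-1}\|\b{e}\|^2_{L^2(e)}\le C\bigl(h_e^{-2}\|\b{e}\|^2_{L^2(T)}+\|\nabla\b{e}\|^2_{L^2(T)}\bigr)$, and $h_e^{-2}\|\b{e}\|^2_{L^2(T)}$ is not dominated by $\|\b{e}\|^2_{H^1(T)}$ --- here $\b{e}$ is the unknown error, not an interpolation remainder, so there is no spare power of $h$ to pay for the weight $h_e^{-1}$ (assuming one would be circular). If instead you retreat to the unweighted Young inequality and the continuous trace theorem, you obtain $\varepsilon\|\b{e}\|^2_{H^1(\Omega)}+C\varepsilon^{-1}\sum_{e}\|c_\tau\|^2_{L^2(e)}$, i.e.\ the data term \emph{without} the factor $h_e$: a strictly weaker, non-decaying bound than the one in the statement.

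The paper avoids ever pairing $c_\tau$ with $\b{e}_\tau$. Writing the friction contribution as $T_3=g(\b{\lambda_{h\tau}},\b{e})-g(\b{\lambda_\tau},\b{e})$ and using \emph{both} complementarity identities $\b{\lambda_\tau}\cdot\b{u_\tau}=|\b{u_\tau}|$ and $\b{\lambda_{h\tau}}\cdot\b{u_{h\tau}}=|\b{u_{h\tau}}|$ together with $|\b{\lambda_\tau}|,|\b{\lambda_{h\tau}}|\le1$, all terms involving $\b{u_\tau}$ cancel and one arrives at $T_3\le 2\int_{\Gamma_C}c_\tau\,|(E_h\b{u_h})_\tau-\b{u_{h\tau}}|\,ds$. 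The second factor is now the computable nonconformity $E_h\b{u_h}-\b{u_h}$, which by the inverse/trace estimates and Lemma \ref{lem:EO} satisfies $\sum_{e}h_e^{-1}\|E_h\b{u_h}-\b{u_h}\|^2_{L^2(e)}\le C\eta_3^2$; this is what supplies the missing power of $h$ and yields $T_3\le C\bigl(\sum_{e}h_e\|c_\tau\|^2_{L^2(e)}\bigr)^{1/2}\eta_3$. The same manoeuvre (monotonicity of $x\mapsto(x-g_a)_+^{m_n}$ to discard the pairing with $\b{u}-\b{u_h}$, leaving only the pairing with $\b{u_h}-E_h\b{u_h}$, plus the embedding \eqref{im} and the a priori bounds of Lemmas \ref{lem:ubound} and \ref{lem:uhbound} to handle $m_n>1$) is needed for the normal-compliance difference $j_n(\b{u_h},\b{e})-j_n(\b{u},\b{e})$; with those two repairs the rest of your plan goes through.
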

\begin{proof} We have,
    $$\norm{\b{u-u_h}}^2_h \leq\sum _{T \in \mathcal{T}_h}|\b{u-u_h}|^2_{1,T}+ \eta_3^2.$$
 Using Lemma \ref{lem:EO}, we note that
    \begin{align*}
    \sum _{T \in \mathcal{T}_h}|\b{u-u_h}|^2_{1,T}&\leq  \sum _{T \in \mathcal{T}_h}|\b{u}-E_h\b{u_h}|^2_{1,T}+\sum _{T \in \mathcal{T}_h}|E_h\b{u_h}-\b{u_h}|^2_{1,T} \\
    &\leq  \sum _{T \in \mathcal{T}_h}|\b{u}-E_h\b{u_h}|^2_{1,T}+\eta_3^2.
    \end{align*}
Set $\b{\phi}=\b{u}-E_h\b{u_h} \in \b{V}$.  Lemma \ref{lem:Interpolation} guarantees the approximation of $\b{\phi}$ as $\b{\phi_h }\in \b{V_c}$. Using the $\b{V}$-ellipticity of the bilinear form $a(\cdot,\cdot)$, characterization in terms of multipliers for the continuous and discrete solutions stated in Lemma \ref{ref:Cchar} and Lemma \ref{ref:Dchar}, we obtain
\begin{equation*}
   \begin{aligned}
   \sum_{T \in\cT_h}|\b{u}-E_h\b{u_h}|^2_{1,T}&\leq a(\b{u}-E_h\b{u_h},\b{\phi})\\
   &=(\b{f,\phi})-j_n(\b{u,\phi})-g(\b{\lambda_\tau,\phi})-a(E_h\b{u_h},\b{\phi})\\
   &=(\b{f,\phi-\phi_h})+(\b{f,\phi_h})-j_n(\b{u,\phi})-g(\b{\lambda_\tau,\phi})+a_h(\b{u_h}-E_h\b{u_h},\b{\phi})\\&-a_h(\b{u_h,\phi-\phi_h})-a_h(\b{u_h,\phi_h})\\
   &=(\b{f,\phi-\phi_h})-j_n(\b{u,\phi})-g(\b{\lambda_\tau,\phi})+a_h(\b{u_h}-E_h\b{u_h},\b{\phi})\\&+ b_h(\b{u_h},\b{\phi_h})+j_n(\b{u_h,\phi_h})+g(\b{\lambda_{h\tau},\phi_h})-a_h(\b{u_h,\phi-\phi_h})\\
   &=(\b{f,\phi-\phi_h})-g(\b{\lambda_{h\tau},\phi-\phi_h})-j_n(\b{u_h,\phi-\phi_h})+b_h(\b{u_h},\b{\phi_h})\\&-a_h(\b{u_h,\phi-\phi_h})+j_n(\b{u_h,\phi})-j_n(\b{u,\phi})+g(\b{\lambda_{h\tau},\phi})-g(\b{\lambda_\tau,\phi})\\&+a_h(\b{u_h}-E_h\b{u_h},\b{\phi})\\
   &=T_1+T_2+T_3+T_4,
 \end{aligned}
 \end{equation*}
where
    \begin{align*}
    T_1&= (\b{f,\phi-\phi_h})-g(\b{\lambda_{h\tau},\phi-\phi_h})-a_h(\b{u_h,\phi-\phi_h})+b_h(\b{u_h,\phi_h})-j_n(\b{u_h,\phi-\phi_h}),\\
    T_2&= a_h(\b{u_h}-E_h\b{u_h},\b{\phi}),\\
    T_3&=g(\b{\lambda_{h\tau},\phi})-g(\b{\lambda_{\tau},\phi}),\\
    T_4&=j_n(\b{u_h,\phi})-j_n(\b{u,\phi}).
    \end{align*}
We now estimate $T_i,~1\leq i \leq 4$ individually. Using integration by parts in the third term of $T_1$ and gathering all the terms, we find 
 
     \begin{align*}
     T_1&= \sum_{T\in \mathcal{T}_h} \int_{T} \b{f\cdot(\phi-\phi_h})~dx
     +\sum_{e \in \mathcal{E}_h^F}\int_{e}(\b{g}-\b{\sigma}_h(\b{u_h})\b{n_e})\cdot (\b{\phi-\phi_h})~ds \nonumber  \nonumber\\ &-\sum_{e\in \mathcal{E}_h^C}\int_{e}({\sigma}_{hn}(\b{u_{h}})+ c_{n}(u_{hn}-g_a)_+^{m_n}) \cdot (\phi-\phi_h)_n~ds-\sum_{e\in \mathcal{E}_h^i}\int_{e}\sjump{\b{\sigma}_h(\b{u_h})}\cdot
     \smean{\b{\phi-\phi_h}}~ds \\&+ b_h(\b{u_h}, \b{\phi_h}) -\sum_{e \in \mathcal{E}_h^C} \int _e (\b{\sigma}_{h\tau}(\b{u_h})+c_\tau \b{\lambda_{h\tau})\cdot (\phi-\phi_h)_\tau}~ds. 
     \end{align*}
Now, we  evaluate the terms on right hand side in the last equation one by one. The first term is bounded by using Cauchy-Schwartz inequality and Lemma \ref{lem:Interpolation} as follows:
    \begin{align*} 
    \sum_{T\in \mathcal{T}_h} \int_{T} \b{f\cdot(\phi-\phi_h)}~dx
    &\leq\bigg(\sum_{T\in \mathcal{T}_h}h_T^2\| \b{f}\|^2_{L^2(T)}\bigg)^{1/2} \bigg(\sum_{T\in \mathcal{T}_h}h_T^{-2}\|\b{\phi-\phi_h}\|^2_{L^2(T)}\bigg)^{1/2}\\
    &\leq\bigg(\sum_{T\in \mathcal{T}_h}h_T^2 \| \b{f}\|^2_{L^2(T)}\bigg)^{1/2} 
    |\b{\phi}|_{H^1 (\Omega)}.
    \end{align*}
The bound on second and third terms follows from Cauchy-Schwartz, discrete trace inequality and Lemma \ref{lem:Interpolation} as:
    \begin{align*} 
    \sum_{e \in \mathcal{E}_h^F}\int_{e}(\b{g}-\b{\sigma}_h(\b{u_h})\b{n_e})\cdot (\b{\phi-\phi_h})~ds
    &\leq\bigg( \sum_{e \in \mathcal{E}_h^F}h_e\|\b{g}-\b{\sigma}_h(\b{u_h)n_e}\|^2_{L^2(e)}\bigg)^{1/2}\bigg(\sum_{e \in \mathcal{E}_h^F}h_e^{-1}\| \b{\phi-\phi_h}\|^2_{L^2(e)}\bigg)^{1/2}\\
    &\leq \bigg( \sum_{e \in \mathcal{E}_h^F}h_e\|\b{g}-\b{\sigma}_h(\b{u_h)n_e}
    \|^2_{L^2(e)}\bigg)^{1/2}|\b{\phi}|_{H^1 (\Omega)}.
   \end{align*}
   and

   \begin{align*}
   -\sum_{e\in \mathcal{E}_h^i}\int_{e}\sjump{\b{\sigma}_h(\b{u_h})}\cdot\smean{\b{\phi-\phi_h}}~ds
    &\leq\bigg(\sum_{e \in \mathcal{E}_h^i}h_e\|\sjump{\b{\sigma}_h(\b{u_h})}\|^2_{L^2(e)}\bigg)^{1/2}\bigg(\sum_{e \in \mathcal{E}_h^i}h_e^{-1}\|\b{\phi-\phi_h}\|^2_{L^2(e)}\bigg)^{1/2}\\
    &\leq\bigg(\sum_{e \in \mathcal{E}_h^i}h_e\|\sjump{\b{\sigma}_h(\b{u_h})}\|^2_{L^2(e)}\bigg)^{1/2}|\b{\phi}|_{H^1 (\Omega)}.
    \end{align*}
 As $\b{\phi_h} \in \b{V_c}$ the bound on $b_h(\b{u_h},\b{\phi_h})$ directly follows from \eqref{4.2}. Again a use of Cauchy-Schwartz, discrete-trace inequality and Lemma \ref{lem:Interpolation} yields
 \begin{align*}
  -\sum_{e\in \mathcal{E}_h^C}\int_{e}({\sigma}_{hn}(\b{u_{h}})&+ c_{n}(u_{hn}-g_a)_+^{m_n}) \cdot (\phi-\phi_h)_n~ds\\
  & \leq\bigg(\sum_{e \in \mathcal{E}_h^C}h_e\|{\sigma}_{hn}(\b{u_{h}})+ c_{n}(u_{hn}-g_a)_+^{m_n}\|^2_{L^2(e)}\bigg)^{1/2}\bigg(\sum_{e \in \mathcal{E}_h^C}h_e^{-1}\|\b{\phi-\phi_h}\|^2_{L^2(e)}\bigg)^{1/2}\\
    &\leq\bigg(\sum_{e \in \mathcal{E}_h^C}h_e\|{\sigma}_{hn}(\b{u_{h}})+ c_{n}(u_{hn}-g_a)_+^{m_n}\|^2_{L^2(e)}\bigg)^{1/2}|\b{\phi}|_{H^1 (\Omega)}.
 \end{align*}
 and
 \begin{align*}
 -\sum_{e \in \mathcal{E}_h^C} \int _e (\b{\sigma}_{h\tau}(\b{u_h})&+c_\tau \b{\lambda_{h\tau})\cdot (\phi-\phi_h)_\tau}~ds\\ &\leq 
 \bigg(\sum_{e \in \mathcal{E}_h^C}h_e\| \b{\sigma}_{h\tau}(\b{u_h})+c_\tau \b{\lambda_{h\tau}}\|^2_{L^2(e)}\bigg)^{1/2}\bigg(\sum_{e \in \mathcal{E}_h^C}h_e^{-1}\|(\b{\phi-\phi_h})_\tau\|^2_{L^2(e)}\bigg)^{1/2} \\
 &\leq \bigg(\sum_{e \in \mathcal{E}_h^C}h_e\| \b{\sigma}_{h\tau}(\b{u_h})+c_\tau \b{\lambda_{h\tau}}\|^2_{L^2(e)}\bigg)^{1/2}|\b{\phi}|_{H^1 (\Omega)}.
 \end{align*}
 Combining, we have
 \begin{align}\label{5.3}
     T_1 \leq \eta_h |\b{\phi}|_{H^1 (\Omega)}.
 \end{align}
 Using the boundedness of the bilinear form $B_h(\cdot, \cdot)$ w.r.t. $\norm{\cdot}_h$\and Lemma \ref{lem:EO}, we have
 \begin{align*}
  T_2= a_h(\b{u_h}-E_h\b{u_h},\b{\phi})
   \leq \norm{\b{u_h}-E_h\b{u_h}}_h \norm{\b{\phi}}_h \leq \eta_3|\b{\phi}|_{H^1{(\Omega)}}.  
 \end{align*}
Further, using the relation $|\b{\lambda_\tau}| \leq 1$, $|\b{\lambda_{h\tau}}| \leq 1$, $\b{\lambda_\tau} \cdot \b{u_\tau} = |\b{u_\tau}|$ 
and $\b{\lambda_{h\tau}} \cdot \b{u_{h\tau}} = |\b{u_{h\tau}}|$ a.e. on $\Gamma_C$, the term $T_3$ can be estimated as:
    \begin{align*}
     T_3 &= g(\b{\lambda_{h\tau},\phi})-g(\b{\lambda_\tau,\phi})\\
     &= \int_{\Gamma_C}c_{\tau}\b{\lambda_{h\tau}}\cdot \b{u_\tau}~ds -\int_{\Gamma_C}c_{\tau}\b{\lambda_{h\tau}}\cdot (E_h\b{u_{h}})_\tau~ds-
     \int_{\Gamma_C}c_{\tau}\b{\lambda_{\tau}}\cdot \b{u_\tau}~ds+
     \int_{\Gamma_C}c_{\tau}\b{\lambda_{\tau}}\cdot (E_h\b{u_{h}})_\tau~ds\\
     &\leq \int_{\Gamma_C} c_\tau |(E_h\b{u_h})_\tau|~ds-\int_{\Gamma_C}c_{\tau}\b{\lambda_{h\tau}}\cdot (E_h\b{u_{h}})_\tau~ds \\
     &= \int_{\Gamma_C} c_\tau |(E_h\b{u_h})_\tau|~ds-\int_{\Gamma_C} c_\tau |{\b{u_h}_\tau}|~ds+\int_{\Gamma_C}c_{\tau}\b{\lambda_{h\tau}}\cdot \b{u_{h\tau}}~ds -\int_{\Gamma_C}c_{\tau}\b{\lambda_{h\tau}}\cdot (E_h\b{u_{h}})_\tau~ds \\
     &\leq \int_{\Gamma_C} c_\tau |(E_h\b{u_h})_\tau-{\b{u_h}_\tau}|~ds + \int_{\Gamma_C}c_{\tau}\b{\lambda_{h\tau}}\cdot (\b{u_{h\tau}}- (E_h\b{u_{h}})_\tau)~ds\\
     &\leq 2\sum_{e\in \mathcal{E}_h^C}\|c_\tau\|_{L^2(e)}\|E_h\b{u_h}-\b{u_h}\|_{L^2(e)}\\
     &\leq C \bigg(\sum_{e\in \mathcal{E}_h^C} h_e \|c_\tau\|^2_{L^{2}(e)}\bigg)^{1/2}\eta_3.
     \end{align*}
In order to estimate $T_4$, we will use standard monotonicity argument \cite{Han:1993:FCPNC}~i.e. $((x-c)^r_+ - (y-c)^r_+)(x-y)\geq 0 ~\forall~ x,y,c \in \mathbb{R},~r\geq 0)$ to observe
\begin{align}\label{5.4}
j_n(\b{u_h},\b{u-u_h})-j_n(\b{u},\b{u-u_h}) \leq 0.
\end{align}
Thus, a use of \eqref{5.4} yields
    \begin{align*}
     T_4&= j_n(\b{u_h},\b{\phi})-j_n(\b{u,\phi})\\
    &= j_n(\b{u_h}, \b{u}-\b{u_h}) + j_n(\b{u_h}, \b{u_h}-E_h\b{u_h})- j_n(\b{u}, \b{u}-\b{u_h}) -j_n(\b{u}, \b{u_h}-E_h\b{u_h})\\ &\leq j_n(\b{u_h}, \b{u_h}-E_h\b{u_h}) -j_n(\b{u},\b{u_h}-E_h\b{u_h})\\
    &=\sum_{e\in \mathcal{E}^C_h}\int_e c_n[(u_{hn}-g_a)^{m_n}_+-({u}_n-g_a)^{m_n}_+](u_{h}-E_h{u_h})_n~ds.
    \end{align*}
We will consider two different cases: $m_n=~1$ and $m_n>1$. When $m_n = 1$, the last relation reduces to
\begin{align*}
T_4 &\leq \sum_{e\in \mathcal{E}^C_h}\int_e c_n[(u_{hn}-g_a)_+-({u}_n-g_a)_+](u_{h}-E_h{u_h})_n~ds\\
&\leq \sum_{e\in \mathcal{E}^C_h}\int_e c_n|u_{hn} -{u}_n|(u_{h}-E_h{u_h})_n~ds \\
&\leq \sum_{e\in \mathcal{E}^C_h} \|c_n\|_{L^{\infty}(e)}\|u_{hn} -{u}_n\|_{L^2(e)}\|(u_{h}-E_h{u_h})_n\|_{L^2(e)}\\
&\leq \sum_{e\in \mathcal{E}^C_h} \|c_n\|_{L^{\infty}(e)}\|\b{u_{h} -{u}}\|_{1,h}\|(u_{h}-E_h{u_h})_n\|_{L^2(e)}.
\end{align*}
otherwise for $m_n > 1$, using the identity $|(a)^m_+-(b)^m_+|\leq m|a-b|\big(|a|^{m-1}+|b|^{m-1}\big)~ a,~b \in \mathbb{R}, m\geq1$, Cauchy H\"older's inequality, \eqref{im} together with Lemma \ref{lem:ubound}  and Lemma \ref{lem:uhbound} , we find
   \begin{align*}
    T_4 &\leq \sum_{e\in \mathcal{E}^C_h} \|c_n\|_{L^{\infty}(e)}\|(u_{hn}-g_a)^{m_n}_+-({u}_n-g_a)^{m_n}_+\|_{L^2(e)}\|u_{hn}-(E_h\b{u_h})_n\|_{L^2(e)}\\
    &\leq \sum_{e\in \mathcal{E}^C_h} C \bigg(\|u_{hn}-g_a\|^{m_n-1}_{L^q(m_n-1)(e)}+\|u_{n}-g_a\|^{m_n-1}_{L^q(m_n-1)(e)}\bigg)\|u_n-u_{hn}\|_{L^p(e)}\\ &~~~~~~~~~~\|u_{hn}-(E_h\b{u_h})_n\|_{L^2(e)}~ds\\
    &\leq  \sum_{e\in \mathcal{E}^C_h} C \bigg(\|\b{u_{h}}-g_a\|^{m_n-1}_{1,h}+\|\b{u}-g_a\|^{m_n-1}_{H^1(\Omega)}\bigg)\|u_n-u_{hn}\|_{L^p(e)}\|u_{hn}-(E_h\b{u_h})_n\|_{L^2(e)}~ds \\
     &\leq \sum_{e\in \mathcal{E}^C_h} C\|\b{u_{h} -{u}}\|_{1,h}\|(u_{h}-E_h{u_h})_n\|_{L^2(e)}.
    \end{align*}
where the H\"older conjugates $\frac{p}{2}, \frac{q}{2} \in (1,\infty)$ satisfying $\frac{1}{p}+\frac{1}{q}=\frac{1}{2}$ are such that $q(m_n-1)\geq 1$.
Thus, for $m_n \geq 1$, we have
\begin{align*}
 T_4 \leq  C\|\b{u-u_{h}}\|_{1,h}\sum_{e\in \mathcal{E}^C_h}\|u_{hn}-(E_h\b{u_h})_n\|_{L^2(e)}.  
\end{align*}
Finally, using standard inverse estimate and discrete Cauchy- Schwartz inequality in $T_4$, we obtain
\begin{align*}
T_4 &\leq  C\|\b{u-u_{h}}\|_{1,h}\sum_{e\in \mathcal{E}^C_h}\|u_{hn}-(E_h\b{u_h})_n\|_{L^2(e)}\\
&\leq  C\|\b{u-u_{h}}\|_{1,h}\sum_{e\in \mathcal{E}^C_h}\sum_{T\in \mathcal{T}_e}{h_e}^{-1/2}\|\b{u_{h}}-E_h\b{u_h}\|_{L^2(T)}\\
&\leq C\|\b{u-u_{h}}\|_{1,h} \bigg(\sum_{e\in \mathcal{E}^C_h} h_e\bigg)^{1/2} \bigg(\sum_{T\in \mathcal{T}_h}h_T^{-2}\|\b{u_{h}}-E_h\b{u_h}\|^2_{L^2(T)} \bigg)^{1/2}. 
\end{align*}
as $h_e \sim h_T$. As a consequence of Lemma 2.4 and identity $\sum_{e\in \mathcal{E}_h^C} |h_e|=|\Gamma_C|$, we find
 \begin{align*}
 T_4 &\leq C\norm{\b{u-u_h}}_h\eta_3.
 \end{align*}
 Combining the estimates obtained in $T_1,~T_2, ~T_3,~T_4$ and using Young's inequality, we get the desired bound on the error term.
\par
In order to find the upper bound for $\sum_{e \in \mathcal{E}_h^C}h_e\|\sigma_n(\b{u-u_h})\|^2_{L^2(e)}$, we recall $\eqref{1.7}$, and use identity $(a+b)^2 \leq 2(a^2+b^2)$ as follows:
\begin{align}\label{5.6}
h_e\|\sigma_n(\b{u-u_h})\|^2_{L^2(e)} &\leq 2(h_e\|-c_n(u_n-g_a)^{m_n}_+ + c_n(u_{hn}-g_a)^{m_n}_+\|^2_{L^2(e)} \nonumber\\&+ h_e\|\sigma_n(\b{u_h}) + c_n(u_{hn}-g_a)^{m_n}_+\|^2_{L^2(e)})
\end{align}
where $e \in \mathcal{E}_h^C$. Using the similar arguments as used in estimating $T_4$, we get
\begin{align*}
\|-c_n(u_n-g_a)^{m_n}_+ + c_n(u_{hn}-g_a)^{m_n}_+\|_{L^2(e)}  \leq C \|u_n - u_{hn}\|_{L^p(e)}.
\end{align*}

for $p>2$. As a consequence, we find
\begin{align}\label{dm}
\|-c_n(u_n-g_a)^{m_n}_+ + c_n(u_{hn}-g_a)^{m_n}_+\|_{L^2(e)}  \leq C \|u_n - u_{hn}\|_{L^p(\Gamma_C)}\leq C\|\b{u-u_h}\|_{1,h}.  
\end{align}
Therefore, summing $\eqref{5.6}$ over all $e \in \mathcal{E}^C_h$ and using the identity $\sum_{e\in \mathcal{E}_h^C} |h_e|=|\Gamma_C|$, we find
\begin{align}\label{5.7}
 \sum_{e \in \mathcal{E}^C_h}h_e\|\sigma_n(\b{u-u_h})\|^2_{L^2(e)} \leq \norm{\b{u-u_h}}^2_h + \eta_6^2. 
\end{align}
This completes the proof.
\end{proof}
\subsection{Efficiency estimates}
In this section, we show that the error estimator $\eta_h$ provides a lower bound for the true error up to data oscillations. In order to prove the efficiency of the estimators we will first prove the following lemma.
\begin{lemma} Let $\b{u}\in \b{V}$ be the solution of continuous problem (1.10) and let $\b{v_h}\in \b{V_h}$ be an arbitrary element then, the following results hold:
  \begin{align*}
 &(i) \sum_{T \in \mathcal{T}_h}h_T^2 \|\b{f}\|^2_{L^2(T)} \leq C(\norm{\b{u-v_h}}^2_h + Osc(\b{f})^2), \\
  &(ii) \sum_{e \in \mathcal{E}^i_h}h_e \|\sjump{\b{\sigma}_h(\b{v_h})}\|^2_{L^2(e)} \leq C(\norm{\b{u-v_h}}^2_h + Osc(\b{f})^2), \\
  &(iii) \sum_{e \in \mathcal{E}^F_h}h_e \|\b{\sigma}_h(\b{v_h})\b{n}-\b{g}\|^2_{L^2(e)} \leq C(\norm{\b{u-v_h}}^2_h + Osc(\b{f})^2+Osc(\b{g})^2),\\
 &(iv) \sum_{e \in \mathcal{E}^C_h}h_e \|\b{\sigma}_{h\tau}(\b{v_h})+c_\tau \b{\lambda_{\tau}}\|^2_{L^2(e)} \leq C(\norm{\b{u-v_h}}^2_h + Osc(\b{f})^2+Osc(c_\tau)^2+Osc(\b{\lambda_\tau})^2),\\
   &(v)\sum_{e \in \mathcal{E}^C_h} h_e\|\b{\sigma}_{hn}(\b{v_h})+c_n(v_{hn}-g_a)^{m_n}_+\|^2_{L^2(e)}
   \leq C (\norm{\b{v_h-u}}_h^2 +\|\b{v_h-u}\|^{2m_n}_{1,h} + \sum_{e \in \mathcal{E}^C_h}h_e\|\b{\sigma}_{hn}(\b{v_h-u})\|^2_{L^2(e)}),  
   \end{align*}\\
where
    \begin{align*}
    Osc(\b{f})^2&=\sum_{T \in \mathcal{T}_h} h^2_T\|\b{f-\bar{f}}\|^2_{L^2(T)},\\
    Osc(\b{g})^2&=\sum_{e \in \mathcal{E}^F_h}h_e\|\b{g-\bar{g}}
    \|^2_{L^2(e)},\\
    Osc(c_\tau)^2&=\sum_{e \in \mathcal{E}^C_h}h_e\|c_\tau-\bar{c_\tau}\|^2_{L^2(e)},\\
    Osc(\b{\lambda_\tau})^2&=\sum_{e \in \mathcal{E}^C_h}h_e\|\b{\lambda_\tau-\bar{\lambda_\tau}}\|^2_{L^2(e)}.
    \end{align*}
where $\b{\bar{v}}$ denotes the $L^2$ projection of $\b{v}$ onto the space of piece-wise constant functions.
\end{lemma}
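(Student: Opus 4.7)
The plan is to prove each of the five bounds by Verf\"urth's bubble-function technique, localizing the argument to the relevant element (for (i)) or to the element patch $\omega_e$ of an edge (for (ii)--(v)). For each local residual $R$, I construct a polynomial test function $\b{\psi} = b\,\bar R$ in which $b$ is the interior bubble $b_T$ or the edge bubble $b_e$ and $\bar R$ is the $L^2$-projection of $R$ onto constants (or onto a suitable low-order polynomial space), so that the finite-dimensional norm equivalence $\int b\,|\bar R|^2 \sim \|\bar R\|_{L^2}^2$ together with the inverse estimates \eqref{2.3}--\eqref{2.4} become available. Inserting $\b{\psi}$ into the continuous variational formulation---equivalently, the Lagrange multiplier formulation of Lemma~\ref{ref:Cchar}---integrating by parts, and using $\b{v_h}$ as a reference, one bounds $\int \bar R\cdot\b{\psi}$ by $|\b{u}-\b{v_h}|_{H^1(\omega)}$ together with data and coefficient oscillations; a triangle inequality applied to the splitting $R = \bar R + (R-\bar R)$ then delivers the stated estimate.

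Concretely, for (i) I take $\b{\psi}_T = b_T\,\bar{\b{f}}$ and exploit $-\b{div}\,\b{\sigma}(\b{u})=\b{f}$ in $T$; since $\b{v_h}|_T$ is linear, $\b{div}\,\b{\sigma}_h(\b{v_h})=\b{0}$ elementwise, so integration by parts yields $\|\bar{\b{f}}\|_{L^2(T)}^2 \le Ch_T^{-1}\|\bar{\b{f}}\|_{L^2(T)}\,|\b{u}-\b{v_h}|_{H^1(T)}$, and after squaring, multiplying by $h_T^2$, and summing over $T \in \mathcal{T}_h$, this gives (i). For (ii), I take $\b{\psi}_e = b_e\,\sjump{\b{\sigma}_h(\b{v_h})}$ on the patch $\omega_e = T^+ \cup T^-$; integration by parts on each triangle of $\omega_e$ combined with $\sjump{\b{\sigma}(\b{u})}=\b{0}$ across interior edges and the element residual bound (i) produces (ii). Bound (iii) is parallel, performed on the single boundary triangle hosting $e$ via the Neumann condition $\b{\sigma}(\b{u})\b{n}=\b{g}$ on $\Gamma_F$ from \eqref{1.6}, up to $Osc(\b{g})$. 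For (iv), I use the \emph{tangential} test function $\b{\psi}_e = b_e\bigl(\b{\sigma}_{h\tau}(\b{v_h}) + \bar{c}_\tau\bar{\b{\lambda}}_\tau\bigr)$ supported in $\omega_e$ with $(\b{\psi}_e)_n = 0$ on $e$; tangentiality annihilates the normal-compliance contribution in Lemma~\ref{ref:Cchar}, and the tangential strong form $\b{\sigma}_\tau(\b{u}) + c_\tau\b{\lambda}_\tau = \b{0}$ on $\Gamma_C$ combined with $\|c_\tau\b{\lambda}_\tau - \bar{c}_\tau\bar{\b{\lambda}}_\tau\|_{L^2(e)} \le \|c_\tau\|_{L^\infty(e)}\|\b{\lambda}_\tau - \bar{\b{\lambda}}_\tau\|_{L^2(e)} + \|\bar{\b{\lambda}}_\tau\|_{L^\infty(e)}\|c_\tau - \bar{c}_\tau\|_{L^2(e)}$ delivers (iv).

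The main obstacle is (v), because of the nonlinearity $(u_n-g_a)_+^{m_n}$. Using the normal strong form $\sigma_n(\b{u}) = -c_n(u_n-g_a)_+^{m_n}$ on $\Gamma_C$ from \eqref{1.7}, I decompose pointwise on each $e \in \mathcal{E}_h^C$
\begin{align*}
\b{\sigma}_{hn}(\b{v_h}) + c_n(v_{hn}-g_a)_+^{m_n} = \b{\sigma}_{hn}(\b{v_h}-\b{u}) + c_n\bigl[(v_{hn}-g_a)_+^{m_n} - (u_n-g_a)_+^{m_n}\bigr].
\end{align*}
The first term on the right contributes directly to the $\sum h_e\|\b{\sigma}_{hn}(\b{v_h}-\b{u})\|_{L^2(e)}^2$ piece already on the right-hand side of (v). For the nonlinear difference I apply the elementary inequality $|a_+^{m_n}-b_+^{m_n}| \le m_n|a-b|(|a|^{m_n-1}+|b|^{m_n-1})$, use $|v_{hn}-g_a|^{2(m_n-1)} \le C(|v_{hn}-u_n|^{2(m_n-1)} + |u_n-g_a|^{2(m_n-1)})$, and then H\"older's inequality with conjugate exponents $m_n$ and $m_n/(m_n-1)$; combining with the embedding \eqref{im} in the form $\|v_{hn}-u_n\|_{L^{2m_n}(\Gamma_C)} \le C\|\b{v_h}-\b{u}\|_{1,h}$ and the a priori bound of Lemma~\ref{lem:ubound} for $\|u_n - g_a\|_{L^{2m_n}(\Gamma_C)}$ yields the global estimate $\|c_n[(v_{hn}-g_a)_+^{m_n} - (u_n-g_a)_+^{m_n}]\|_{L^2(\Gamma_C)}^2 \le C(\|\b{v_h}-\b{u}\|_{1,h}^2 + \|\b{v_h}-\b{u}\|_{1,h}^{2m_n})$, the two powers arising respectively from the $|u_n-g_a|^{m_n-1}$ and $|v_{hn}-u_n|^{m_n-1}$ contributions. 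Summing $h_e$-weighted over $e \in \mathcal{E}_h^C$ and using $h_e \le h \le 1$, so that $\sum_e h_e\|\cdot\|_{L^2(e)}^2 \le \|\cdot\|_{L^2(\Gamma_C)}^2$, finishes (v). The careful bookkeeping of the two distinct powers on the right-hand side---a consequence of passing through $L^{2m_n}(\Gamma_C)$ rather than the simpler $L^p(\Gamma_C)$ exponent used in the reliability bound---is the most delicate point of the argument.
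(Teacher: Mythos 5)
Your proof follows essentially the same route as the paper's: interior/edge bubble functions tested against the piecewise-constant projections of the residuals for (i)--(iv) (including the key device of a purely tangential test function with vanishing normal component in (iv), which annihilates the normal-compliance term), and for (v) the same non-bubble argument via the strong relation $\sigma_n(\b{u})=-c_n(u_n-g_a)_+^{m_n}$, the elementary inequality for $|a_+^{m}-b_+^{m}|$, H\"older's inequality, and the trace embedding \eqref{im}. The only differences are cosmetic --- you invoke the pointwise strong forms of the equilibrium and boundary relations where the paper substitutes the multiplier identity of Lemma \ref{ref:Cchar} directly into the bubble-function identity, and you fix the H\"older pair $(2m_n,\,2m_n/(m_n-1))$ where the paper keeps a general admissible pair $(p,q)$ --- and the oscillation contribution omitted from your displayed inequality in (i) is correctly accounted for in your surrounding description, so there is no gap.
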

\begin{proof}
\textbf{($i$)} Let $T \in \mathcal{T}_h $ be arbitrary and let $\xi \in P_3(T)$ be bubble function that vanishes on $\partial T$ and takes unit value at the barycenter of $T$. By equivalence of norms on finite dimensional spaces, we have 
    \begin{align}\label{5.8}
     \|\b{\bar{f}}\|^2_{L^2(T)} \leq \int_{T} \xi\b{\bar{f}}\cdot \b{\bar{f}}~dx.
    \end{align}
Let $\b{\phi = \bar{f}} \xi$. We can identify $\b{\phi}$ as an element of $[H^1_0(\Omega)]^2$ by extending it by 0 outside of $T$. It follows from Lemma 1.1, integration by parts and a standard inverse estimate that
    \begin{align}\label{5.9}
    \int_{T} \xi\bar{\b{f}}\cdot \b{\bar{f}}~dx &= \int_{\Omega} \b{{f}}\cdot \b{\phi}~dx + \int_{\Omega} 
    \b{(\bar{f}-f)}\cdot \b{\phi}~dx \nonumber \\
    &= a(\b{u,\phi}) + \int_{\Omega} (\b{\bar{f}-f})\cdot \b{\phi}~dx + \int_{T}\b{div} \b{\sigma}(\b{v_h})\cdot\b{\phi}~dx \nonumber \\
    &= a(\b{u,\phi}) + \int_{T} (\b{\bar{f}-f)}\cdot \b{\phi}~dx - \int_{T} \b{\sigma}(\b{v_h}): \b{\epsilon}(\b{\phi})~dx \nonumber \\
    &= \int_{T}(\b{\sigma}(\b{u})-\b{\sigma}(\b{v_h})): \b{\epsilon}(\b{\phi})~dx +\int_{T} (\b{\bar{f}-f)}\cdot \b{\phi}~dx \nonumber \\
    &\leq |\b{u-v_h}|_{H^1(T)} |\b{\phi}|_{H^1(T)} + \|\b{\bar{f}-f}\|_{L^2(T)}\| \b{\phi}\|_{L^2(T)} \nonumber \\
    &\leq |\b{u-v_h}|_{H^1(T)} h_T^{-1}\|\b{\phi}\|_{L^2(T)} + \|\b{\bar{f}-f}\|_{L^2(T)}\| \b{\phi}\|_{L^2(T)} \nonumber \\
    &\leq \big(|\b{u-v_h}|_{H^1(T)} h_T^{-1} + \|\b{\bar{f}-f}\|_{L^2(T)} \big)\|\b{\bar{f}}\|_{L^2(T)}.
    \end{align}
Combining \eqref{5.8} and \eqref{5.9}, we obtain
   \begin{align*}
   h^2_T\|\b{\bar{f}}\|^2_{L^2(T)} \leq |\b{u-v_h}|^2_{H^1(T)} + h^2_T\|\b{\bar{f}-f}\|^2_{L^2(T)}.
   \end{align*}
and hence by triangle inequality,
\begin{align}\label{5.10}
h^2_T\|\b{f}\|^2_{L^2(T)} \leq |\b{u-v_h}|^2_{H^1(T)} +  h^2_T\|\b{\bar{f}-f}\|^2_{L^2(T)}.    
\end{align}
Summing up \eqref{5.10} over all triangles in $\mathcal{T}_h$ we get the desired result.
  
\smallbreak
\textbf{($ii$)} 
Let $e \in \mathcal{E}^i_h$ be arbitrary and this edge is shared by two triangles $T^{-}$ and $T^{+}$. Let $n_e$ be the unit vector normal to $e$ and pointing from the triangle $T^-$ to $T^+$. We construct a bubble function $\xi \in P_4(T^- \cup T^+)$  such that it vanishes on the boundary of quadrilateral $T^- \cup T^+$ and takes unit value at the midpoint  of $e$. Define $\b{\beta}= \xi \b{\xi_1} $ on $T^- \cup T^+$ where $\b{\xi_1} \in [P_0(T^{-} \cup T^{+})]^2$ such that $\b{\xi_1}=\sjump{\b{\sigma}_h(\b{v_h})}$ on edge e. We can identify $\b{\beta}$ by its zero extension outside $T^{-} \cup T^{+}$ yielding $\b{\beta} \in [H^1_0(\Omega)]^2$. A use of equivalence of norms on finite dimensional space yields 
    \begin{align}\label{5.11}
    \|\b{\xi_1}\|^2_{L^2(e)} &\leq \int_e \xi\b{\xi_1}\cdot \b{\xi_1}~ds \nonumber\\
    &=\int_e \b{\beta} \cdot \b{\xi_1}~ds.
    \end{align}
It then follows from integration by parts, Lemma 1.1, Cauchy Schwartz inequality and standard inverse estimate that
    \begin{align}\label{5.12}
    \int_{e}\sjump{\b{\sigma}_h(\b{v_h})}\cdot \b{\beta}~ds &= \int_{T^-\cup T^+} \b{\sigma}_h(\b{v_h}):\b{\epsilon}(\b{\beta})~dx \nonumber\\
    &= \int_{T^-\cup T^+} \b{\sigma}_h(\b{v_h}):\b{\epsilon}(\b{\beta})~dx  + \int_{T^-\cup T^+} \b{f}\cdot \b{\beta}~dx -\int_{T^-\cup T^+} \b{\sigma}(\b{u}):\b{\epsilon}(\b{\beta})~dx \nonumber \\
    &\leq \sum_{T \in \mathcal{T}_e}\bigg(|\b{u-v_h}|_{H^1(T)}|\b{\beta}|_{H^1(T)} + \|\b{f}\|_{L^2(T)}\|\b{\beta}\|_{L^2(T)}\bigg) \nonumber \\
    &\leq \sum_{T \in \mathcal{T}_e}\big(|\b{u-v_h}|_{H^1(T)}h^{-1}_T + \|\b{f}\|_{L^2(T)}\big)\|\b{\beta}\|_{L^2(T)}  \nonumber\\
     &\leq \sum_{T \in \mathcal{T}_e}\big(|\b{u-v_h}|_{H^1(T)}h^{-1}_T + \|\b{f}\|_{L^2(T)}\big) h^{1/2}_e \|\b{\xi_1}\|_{L^2(e)}.
    \end{align}
Since $h_e \sim h_T$, therefore, combining \eqref{5.11} and \eqref{5.12}, we obtain
    \begin{align}\label{5.13}
    h_e^{1/2}\|\sjump{\b{\sigma}_h(\b{v_h})}\|_{L^2(e)} &\leq \sum_{T \in \mathcal{T}_e}\big(|\b{u-v_h}|_{H^1(T)} +h_T \|\b{f}\|_{L^2(T)}\big).
    \end{align}
Squaring \eqref{5.13} and summing up over all the interior edges, we find
    \begin{align*}
    \sum_{e \in \mathcal{E}_h^i}h_e\|\sjump{\b{\sigma}_h(\b{v_h})}\|^2_{L^2(e)}&\leq \sum_{T \in \mathcal{T}_h}|\b{u-v_h}|^2_{H^1(T)} +\sum_{T \in \mathcal{T}_h}h^2_T \|\b{f}\|^2_{L_2(T)}.
     \end{align*}
Finally, ($ii$) follows with a use of ($i$).
\\
\\
\textbf{($iii$)} Let $e \in \mathcal{E}^F_h$ and let $T$ be the triangle such that $e \subseteq \partial{T}$. We construct a bubble function $\xi \in P_2(T)$ that vanishes on $\partial {T} \setminus e$ and takes unit value at the midpoint  of $e$. Define $\b{\xi_1} \in [P_0(T)]^2$ by assigning $\b{\xi_1} =\b{\sigma}_h(\b{v_h})\b{n}-\b{\bar{g}}$ on edge e. Define $\b{\beta}= \xi \b{\xi_1} $ on $T$ and extend $\b{\beta}$ by 0 outside of T and hence it belongs to $\b{V}$. Now, using equivalence of norms on finite dimensional space, we obtain
    \begin{align}\label{5.14}
    \|\b{\xi_1}\|^2_{L^2(e)} &\leq \int_e \xi\b{\xi_1} \cdot \b{\xi_1}~ds\nonumber\\
    &=\int_e \b{\beta} \cdot \b{\xi_1}~ds.  
    \end{align}
Using Lemma \ref{ref:Cchar}, we find
   \begin{align}\label{5.15}
   \int_e \b{\beta} \cdot \b{\xi_1}~ds &=\int_e \b{\sigma}_h(\b{v_h})\b{n}\cdot \b{\beta}~ds+\int_e (\b{g-\bar{g}})\cdot \b{\beta}~ds -\int_e \b{g}\cdot \b{\beta}~ds \nonumber\\
   &=\int_e \b{\sigma}_h(\b{v_h})\b{n}\cdot \b{\beta}~ds +\int_e (\b{g-\bar{g}})\cdot \b{\beta}~ds -a(\b{u,\beta)}+\int_T \b{f\cdot \beta}~dx.
   \end{align}
Now, the use of integration by parts, Cauchy Schwartz and standard inverse estimates in \eqref{5.15} yields
   \begin{align}\label{5.16}
   \int_e \b{\beta} \cdot \b{\xi_1}~ds &=  \int_{T} \b{\sigma}(\b{v_h}):\b{\epsilon}(\b{\beta})~dx-\int_{T} \b{\sigma}(\b{u}):\b{\epsilon}(\b{\beta})~dx+\int_e \b{(g-\bar{g})}\cdot \b{\beta}~ds+\int_T \b{f} \cdot \b{\beta}~dx.\nonumber\\
    & \leq |\b{u-v_h}|_{H^1(T)}  |\b{\beta}|_{H^1(T)}+\|\b{f}\|_{L^2(T)}\|\b{\beta}\|_{L^2(T)}+\|\b{g-\bar{g}}\|_{L^2(e)}\|\b{\beta}\|_{L^2(e)}\nonumber \\
    & \leq \big (|\b{u-v_h}|_{H^1(T)}h^{-1}_T + \|\b{f}\|_{L^2(T)}+h^{-1/2}_e\|\b{g-\bar{g}}\|_{L^2(e)}\big)\|\b{\beta}\|_{L^2(T)} \nonumber\\
    & \leq \bigg(|\b{u-v_h}|_{H^1(T)}h^{-1}_T+\|\b{f}\|_{L^2(T)}+h^{-1/2}_e\|\b{g-\bar{g}}\|_{L^2(e)}\bigg)h^{1/2}_e\|\b{\xi_1}\|_{L^2(e)}.
    \end{align}
Combining \eqref{5.14} and \eqref{5.16}, we get,
    \begin{align}\label{5.17}
    h^{1/2}_e\|\b{\xi_1}\|_{L^2(e)} &\leq |\b{u-v_h}|_{H^1(T)}+h_T\|\b{f}\|_{L^2(T)}+h^{1/2}_e\|\b{g-\bar{g}}\|_{L^2(e)}. 
    \end{align}
Squaring \eqref{5.17} and summing up over all $e \in \mathcal{E}^F_h$, we obtain
\begin{align*}
     \sum_{e \in \mathcal{E}_h^F}h_e\|\b{\sigma}_h(\b{v_h})\b{n}-\bar{\b{g}}\|^2_{L^2(e)}&\leq \sum_{T \in \mathcal{T}_h}|\b{u-v_h}|^2_{H^1(T)}+\sum_{T \in \mathcal{T}_h}h_T^2\|\b{f}\|^2_{L^2(T)}+\sum_{e \in \mathcal{E}_h^F}h_e\|\b{g-\bar{g}}\|^2_{L^2(e)}.
\end{align*}
hence, thereafter using triangle inequality $(iii)$ follows from $(i)$.
\smallbreak
\textbf{($iv$)} Let $e \in \mathcal{E}^C_h$ be arbitrary and let $T$ be the triangle such that $e \subseteq \partial{T}$.
 In order to estimate $\|\b{\sigma}_{h\tau}(\b{v_h})+c_\tau \b{\lambda_{\tau}}\|_{L^2(e)}$, we will make use of triangle inequality as follows:
    \begin{align}\label{5.18}
    \|\b{\sigma}_{h\tau}(\b{v_h})+c_\tau \b{\lambda_{\tau}}\|_{L^2(e)}\leq\|\b{\sigma}_{h\tau}(\b{v_h})+\bar{c_\tau}\b{ \lambda_{\tau}}\|_{L^2(e)}+\|\b{\lambda_{\tau}}(c_\tau-\bar{c_\tau})\|_{L^2(e)}.
    \end{align}
Also,
    \begin{align}\label{5.19}
    \|\b{\sigma}_{h\tau}(\b{v_h})+\bar{c_\tau} \b{\lambda_{\tau}}\|_{L^2(e)}\leq \|\b{\sigma}_{h\tau}(\b{v_h})+\bar{c_\tau} \b{\bar{\lambda_{\tau}}}\|_{L^2(e)}+\|\bar{c_\tau}\b{(\lambda_\tau-\bar{\lambda_\tau}})\|_{L^2(e)}.
    \end{align}
     Define a bubble function $\xi \in P_2(T)$  which vanishes on $\partial {T} \setminus e$ and takes unit value at the midpoint  of e. Let $\b{\xi_1} \in [P_0(T)]^2$ such that  $\xi_{1n}=0$  and  $\b{\xi_{1\tau}}=\b{\sigma}_{h\tau}(\b{v_h})+\bar{c_\tau} \b{\bar{\lambda_{\tau}}}$ on edge $e$. Define $\b{\beta} = \xi\b{\xi_1}$ on $T$ whose extension by 0 outside of $T$  belongs to $\b{V}$. Using the equivalence of norms on finite dimensional space, we have
    \begin{align}\label{5.20}
    \|\b{\xi_1}\|^2_{L^2(e)}&\leq \int_e \xi\b{\xi_1} \cdot \b{\xi_1}~ds \nonumber\\
     &= \int_e  \b{\sigma}_{h\tau}(\b{v_h})\cdot \b{\beta}~ds+\int_e \bar{c_\tau} \b{\bar{\lambda_{\tau}}} \cdot \b{\beta}~ds.
     \end{align}
as $\b{\xi_{1\tau}}=\b{\xi_1}$. A use of integration by parts yields
     \begin{align}\label{5.21}
     \int_{T} \b{\sigma}_h(\b{v_h}):\b{\epsilon}(\b{\beta})~dx&=\int_{T} -\b{div} \b{\sigma}_h(\b{v_h})\cdot \b{\beta}~dx +\int_{\partial{T}}\b{\sigma}_h(\b{v_h})\b{n}\cdot\b{\beta}~ds \nonumber\\
     &=\int_{e}\b{\sigma}_{hn}(\b{v_h})\beta_n~ds + \int_{e}\b{\sigma}_{h\tau}(\b{v_h})\cdot \b{\beta_{\tau}}~ds \nonumber\\
     &=\int_{e}\b{\sigma}_{h\tau}(\b{v_h})\cdot \b{\beta_{\tau}}~ds 
     \nonumber\\
     &=\int_{e}\b{\sigma}_{h\tau}(\b{v_h})\cdot \b{\beta}~ds.
     \end{align}
as $\beta_n=0$. Now, it follows from \eqref{5.20}, \eqref{5.21}, Lemma 1.1, Cauchy Schwartz inequality and standard inverse estimate that 
     \begin{align*}
    \|\b{\xi_1}\|^2_{L^2(e)}&\leq \int_{T} \b{\sigma}_h(\b{v_h}):\b{\epsilon}(\b{\beta})~dx+\int_e \bar{c_\tau} \b{\bar{\lambda_{\tau}}} \cdot \b{\beta}~ds \nonumber \\
     &= \int_{T} \b{\sigma}_h(\b{v_h}):\b{\epsilon}(\b{\beta})~dx -\int_{T} \b{\sigma}(\b{u}):\b{\epsilon}(\b{\beta})~dx -\int_e c_\tau \b{\lambda_{\tau}} \cdot \b{\beta}~ds + \int_T \b{f}\cdot \b{\beta}~dx 
     +\int_e \bar {c_\tau}\b{\bar{\lambda_{\tau}}} \cdot \b{\beta}~ds \nonumber\\
     &= \int_{T} (\b{\sigma}_h(\b{v_h}) - \b{\sigma}(\b{u})):\b{\epsilon}(\b{\beta})~dx+\int_T \b{f}\cdot \b{\beta}~dx + \int_e c_\tau \b{(\bar{\lambda_{\tau}}}-\b{\lambda_{\tau}) \cdot \beta}~ds + \int_e \b{\bar{\lambda_{\tau}}} (\bar{c_{\tau}}-c_{\tau}) \cdot \b{\beta}~ds\nonumber \\
     &\leq |\b{u-v_h}|_{H^1(T)}|\b{\beta}|_{H^1(T)}+ \|\b{f}\|_{L^2(T)}\|\b{\beta}\|_{L^2(T)}+\|c_\tau\|_{L^{\infty}(e)}\|\b{\bar{\lambda_{\tau}}-\lambda_{\tau}}\|_{L^{2}(e)}\|\b{\beta}\|_{L^2(e)}\nonumber\\
     &+ \|\b{\bar{\lambda_{\tau}}}\|_{L^{\infty}(e)}\|\bar{c_{\tau}}-c_{\tau}\|_{L^2(e)}\|\b{\beta}\|_{L^2(e)}\nonumber\\
     &\leq \bigg(|\b{u-v_h}|_{H^1(T)}h^{-1}_T + \|\b{f}\|_{L^2(T)}+h^{-1/2}_e\|c_\tau\|_{L^{\infty}(e)}\|\b{\bar{\lambda_{\tau}}-\lambda_{\tau}}\|_{L^2(e)}\nonumber\\
     &+ h^{-1/2}_e\|\b{\bar{\lambda_{\tau}}}\|_{L^{\infty}(e)}\|\bar{c_{\tau}}-c_{\tau}\|_{L^2(e)}\bigg)\|\b{\beta}\|_{L^2(T)}\nonumber\\
     &\leq h^{1/2}_e\bigg(|\b{u-v_h}|_{H^1(T)}h^{-1}_T + \|\b{f}\|_{L^2(T)}+h^{-1/2}_e\|c_{\tau}\|_{L^{\infty}(e)}\|\b{\bar{\lambda_{\tau}}-\lambda_{\tau}}\|_{L^{2}(e)} \nonumber\\
     &+ h^{-1/2}_e\|\bar{c_{\tau}}-c_{\tau}\|_{L^2(e)}\bigg)\|\b{\xi_1}\|_{L^2(e)}.
    \end{align*}
Squaring the last equation and summing over all $e \in \mathcal{E}^C_h$, we obtain
   \begin{align}\label{5.22}
    \sum_{e \in \mathcal{E}^C_h}h_e \|\b{\sigma}_{h\tau}(\b{v_h})+\bar{c_\tau} \b{\bar{\lambda_{\tau}}}\|^2_{L_2(e)} &\leq   C(\norm{\b{u-v_h}}^2_h +  Osc(\b{f})^2+Osc(c_\tau)^2+Osc(\b{\lambda_\tau})^2  
   \end{align}
Finally using \eqref{5.18}, \eqref{5.19} and \eqref{5.22}, we arrive at the desired estimate ($iv$).
\\
\\
\textbf{($v$)} This term can not be estimated directly by using the standard techniques of bubble functions.

Since, in general due to the positive part of the function,
    \begin{align*}
    \|\b{\sigma}_{hn}(\b{v_h})+c_n(v_{hn}-g_a)^{m_n}_+\|^2_{L^2(e)}\nleq \int_{e}(\b{\sigma}_{hn}(\b{v_h})+c_n(v_{hn}-g_a)^{m_n}_+)^2\b{\beta}~ds
    \end{align*}
where $\b{\beta}$ is an edge bubble function. We proceed to estimate it as follows: first using \eqref{1.7}, we find
$\|{\sigma}_{hn}(\b{v_h})+c_n(v_{hn}-g_a)^{m_n}_+\|_{L^2(e)}$
    \begin{align}\label{5.23}
    &=\|{\sigma}_{hn}(\b{v_h})-{\sigma}_n(\b{u})-c_n(u_n-g_a)^{m_n}_+ + c_n(v_{hn}-g_a)^{m_n}_+\|_{L^2(e)} \nonumber\\
    &\leq \|{\sigma}_{hn}(\b{v_h})-{\sigma}_n(\b{u})\|_{L^2(e)}+\| c_n(v_{hn}-g_a)^{m_n}_+ -c_n(u_n-g_a)^{m_n}_+ \|_{L^2(e)}.
    \end{align}
where $e \in \mathcal{E}^C_h$. Again, we will consider two cases. For $m_n = 1$, using  $\eqref{im}$ in the above equation $\eqref{5.23}$, we find
\begin{align*}
\|{\sigma}_{hn}(\b{v_h})+c_n(v_{hn}-g_a)^{m_n}_+\|_{L^2(e)} &\leq \|{\sigma}_{hn}(\b{v_h})-{\sigma}_n(\b{u})\|_{L^2(e)}+\|c_n\|_{L^{\infty}(e)}\|v_{hn} - u_n\|_{L^2(e)} \\
&\leq \|{\sigma}_{hn}(\b{v_h})-{\sigma}_n(\b{u})\|_{L^2(e)}+\|c_n\|_{L^{\infty}(e)}\|\b{v_{h} - u}\|_{1,h}. 
\end{align*}
Otherwise for $m_n>1$, a use of Cauchy H\"older's inequality and identity $|a^m-b^m|\leq m~|a-b|~(|a|^{m-1}+|b|^{m-1})~\text{where}~ a,b\geq~0,~m\geq1$ yields \\
 $\|{\sigma}_{hn}(\b{v_h})+c_n(v_{hn}-g_a)^{m_n}_+\|_{L^2(e)}$
    \begin{align}\label{pq}
    &\leq m_n\| c_n\|_{L^{\infty}(e)}\|(v_{hn}-u_n)(|v_{hn}-g_a|^{m_n-1}+|u_n-g_a|^{m_n-1}) \|_{L^2(e)}\nonumber\\&+ \|{\sigma}_{hn}(\b{v_h-u})\|_{L^2(e)}\nonumber\\
    &\leq m_n\| c_n\|_{L^{\infty}(e)}\|v_{hn}-u_n\|_{L^p(e)}\||v_{hn}-g_a|^{m_n-1}+|u_n-g_a|^{m_n-1} \|_{L^q(e)}\nonumber\\&+ \|{\sigma}_{hn}(\b{v_h-u)}\|_{L^2(e)}\nonumber\\
    &\leq C\|v_{hn}-u_n\|_{L^p(e)}\bigg(\|v_{hn}-g_a\|^{m_n-1}_{L^{q(m_n-1)}(e)}+ \|u_n-g_a\|^{m_n-1}_{L^{q(m_n-1)}(e)} \bigg)\nonumber\\&+ \|{\sigma}_{hn}(\b{v_h-u})\|_{L^2(e)}.
     \end{align}
where, $\frac{p}{2}$ and $\frac{q}{2}$ are H\"older conjugates satisfying $\frac{1}{p}+\frac{1}{q}=\frac{1}{2}$ and $q(m_n-1)\geq1$. Further, using \eqref{im} and Lemma \ref{lem:ubound}, we obtain
   \begin{align}\label{5.24}
   \|u_n-g_a\|^{m_n-1}_{L^{q(m_n-1)}(e)} \leq \|u_n-g_a\|^{m_n-1}_{L^{q(m_n-1)}(\Gamma_C)} \leq  C.
   \end{align}
Also, 
    \begin{align}\label{5.25}
    \|v_{hn}-g_a\|_{L^{q(m_n-1)}(e)} &\leq  \|v_{hn}-u_n\|_{L^{q(m_n-1)}(\Gamma_C)}\nonumber \\
    &\leq \|v_{hn}-u_n\|_{L^{q(m_n-1)}(\Gamma_C)}+\|u_n-g_a\|_{L^{q(m_n-1)}(\Gamma_C)} \nonumber\\
    &\leq \|\b{v_{h}-u}\|_{1,h}+C.
    \end{align}
where $C$ is constant depending on $\|\b{f}\|_{L^2(\Omega)}, \|\b{g}\|_{L^2(\Gamma_F)} \text{and}~\|g_a\|^{m_n-1}_{L^{q(m_n-1)}(\Gamma_C)} $. Using \eqref{5.24}, \eqref{im} and \eqref{5.25} in \eqref{pq} we obtain,
    \begin{align*}
    \|{\sigma}_{hn}(\b{v_h})+c_n(v_{hn}-g_a)^{m_n}_+\|_{L^2(e)}
    &\leq \|\b{v_{h}-u}\|_{1,h}\big(\|\b{v_{h}-u}\|^{m_n-1}_{1,h}+C \big) +\|{\sigma}_{hn}(\b{v_h-u})\|_{L^2(e)}.
    \end{align*}
Thus, we have
   \begin{align}\label{5.26}
   {h_e}^{1/2}\|{\sigma}_{hn}(\b{v_h})+c_n(v_{hn}-g_a)^{m_n}_+\|_{L^2(e)}
    &\leq h_e^{1/2}C\|\b{v_h-u}\|_{1,h} +h_e^{1/2}\|\b{v_h-u}\|^{m_n}_{1,h}\\
    & \quad+ h_e^{1/2}\|{\sigma}_{hn}(\b{v_h-u})\|_{L^2(e)}.\nonumber
   \end{align}
Squaring \eqref{5.26} and summing over all $e \in \mathcal{E}^C_h$ and finally using the identity $\sum_{e \in \mathcal{E}_h^C}|h_e| = \lvert \Gamma_C \rvert$, we obtain
   \begin{align*}
   \sum_{e \in \mathcal{E}^C_h} h_e\|{\sigma}_{hn}(\b{v_h})+c_n(v_{hn}-g_a)^{m_n}_+\|^2_{L^2(e)}
   &\leq C\norm{\b{v_h-u}}_h^2 +\|\b{v_h-u}\|^{2m_n}_{1,h} + \sum_{e \in \mathcal{E}^C_h}h_e\|\sigma_{hn}(\b{v_h-u})\|^2_{L^2(e)}.
   \end{align*}
   This completes the proof of this lemma.
 \end{proof}
 The following theorem ensures the efficiency of the error estimator $\eta_h$.
\begin{theorem} Let $\b{u} \in \b{V}$ and $\b{u_h} \in \b{V_h}$ be the solution of continuous problem \eqref{1.10} and discrete problem \eqref{4.1}, respectively. Then, the following results hold.
\begin{align*}
    \eta_h^2 &\leq \norm{\b{u_{h}-u}}^2_h +\sum_{e \in \mathcal{E}^C_h}h_e\|{\sigma}_{hn}(\b{u_h-u})\|^2_{L^2(e)} +\sum_{e \in \mathcal{E}^C_h}h_e\|c_\tau\|^2_{L^{\infty}(e)}\|\b{ \lambda_{h\tau}-\lambda_{\tau}}\|^2_{L^2(e)}\\&+ Osc(\b{f})^2 +Osc(c_\tau)^2+Osc(\b{\lambda_\tau})^2.
\end{align*}
\end{theorem}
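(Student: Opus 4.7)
The plan is to estimate each of the six pieces $\eta_1^2,\ldots,\eta_6^2$ of $\eta_h^2$ separately, specialising the free argument $\b{v_h}$ in Lemma 5.2 to $\b{u_h}$. Parts (i), (ii) and (iii) of that lemma immediately bound $\eta_1^2$, $\eta_2^2$ and $\eta_5^2$ in terms of $\norm{\b{u}-\b{u_h}}_h^2$ and the oscillation quantities $Osc(\b{f})^2$ (and, for $\eta_5^2$, $Osc(\b{g})^2$, which may be absorbed into the constant of the theorem). The final contribution $\eta_6^2$ is controlled by Lemma 5.2(v) with $\b{v_h}=\b{u_h}$, producing the $\norm{\b{u}-\b{u_h}}_h^2$ part together with the edge term $\sum_{e\in\mathcal{E}_h^C}h_e\|\sigma_{hn}(\b{u_h}-\b{u})\|^2_{L^2(e)}$ that appears in the statement.

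The indicator $\eta_3^2$ is not covered by Lemma 5.2, so I would handle it by exploiting the regularity of the exact solution: since $\b{u}\in\b{V}\subset [H^1(\O)]^2$ with $\b{u}=\b{0}$ on $\Gamma_D$, the jump $\sjump{\b{u}}$ vanishes on every edge of $\mathcal{E}_h^o=\mathcal{E}_h^i\cup\mathcal{E}_h^D$. Consequently $\sjump{\b{u_h}}=\sjump{\b{u_h}-\b{u}}$ on $\mathcal{E}_h^o$, and the definition of $\norm{\cdot}_h$ gives
$$\eta_3^2 = \sum_{e\in\mathcal{E}_h^o}\frac{\eta}{h_e}\|\sjump{\b{u_h}-\b{u}}\|^2_{L^2(e)} \leq C\,\norm{\b{u}-\b{u_h}}_h^2.$$

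For the tangential indicator $\eta_4^2$ the delicate point is that Lemma 5.2(iv) delivers a bound with the continuous multiplier $\b{\lambda_\tau}$, whereas $\eta_4$ involves the discrete multiplier $\b{\lambda_{h\tau}}$. I would insert $\pm c_\tau\b{\lambda_\tau}$ and apply the triangle inequality
$$\|\b{\sigma}_{h\tau}(\b{u_h})+c_\tau\b{\lambda_{h\tau}}\|_{L^2(e)} \leq \|\b{\sigma}_{h\tau}(\b{u_h})+c_\tau\b{\lambda_\tau}\|_{L^2(e)} + \|c_\tau\|_{L^\infty(e)}\|\b{\lambda_{h\tau}}-\b{\lambda_\tau}\|_{L^2(e)},$$
then square, multiply by $h_e$, and sum over $e\in\mathcal{E}_h^C$. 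The first sum is controlled by Lemma 5.2(iv) with $\b{v_h}=\b{u_h}$, while the second contributes precisely the $\sum_{e\in\mathcal{E}_h^C}h_e\|c_\tau\|^2_{L^\infty(e)}\|\b{\lambda_{h\tau}}-\b{\lambda_\tau}\|^2_{L^2(e)}$ term appearing in the theorem.

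The main obstacle I expect is the higher-order quantity $\|\b{u}-\b{u_h}\|_{1,h}^{2m_n}$ produced by Lemma 5.2(v) when $m_n>1$, which is not explicitly displayed on the right-hand side of the theorem. In the asymptotic regime $\|\b{u}-\b{u_h}\|_{1,h}\to 0$ this quantity is of higher order than $\|\b{u}-\b{u_h}\|_{1,h}^2$ and can therefore be absorbed into $C\,\norm{\b{u}-\b{u_h}}_h^2$ via the norm equivalence stated after the definition of $\norm{\cdot}_h$. Collecting all six bounds for $\eta_1^2,\ldots,\eta_6^2$ then yields the claimed efficiency estimate.
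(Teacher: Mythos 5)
Your treatment of $\eta_1,\eta_2,\eta_3,\eta_4,\eta_5$ coincides with the paper's: parts (i)--(iv) of the preceding efficiency lemma with $\b{v_h}=\b{u_h}$, the observation that $\sjump{\b{u}}=0$ on $\mathcal{E}_h^o$ for $\eta_3$, and the insertion of $\pm c_\tau\b{\lambda_\tau}$ followed by the triangle inequality for $\eta_4$. (Like the paper, you drop $Osc(\b{g})^2$ from the final bound for $\eta_5$; strictly it is a data-oscillation term and should appear on the right-hand side rather than be ``absorbed into the constant.'') The one place you genuinely diverge is $\eta_6$: the paper does not invoke part (v) of the lemma but argues directly from \eqref{1.7} together with the already-established estimate \eqref{dm}, which bounds $\|c_n(u_{hn}-g_a)^{m_n}_+-c_n(u_n-g_a)^{m_n}_+\|_{L^2(e)}$ by $C\|\b{u}-\b{u_h}\|_{1,h}$ with $C$ depending only on the load vectors; this sidesteps the exponent $2m_n$ entirely. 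Your route through part (v) can be made to work, but your justification for absorbing $\|\b{u}-\b{u_h}\|_{1,h}^{2m_n}$ into $C\norm{\b{u}-\b{u_h}}_h^2$ is not valid as written: the theorem is a fixed-$h$ estimate with a constant independent of the approximation quality, so an appeal to ``the asymptotic regime $\|\b{u}-\b{u_h}\|_{1,h}\to 0$'' establishes nothing. The correct repair uses the uniform a priori bounds of Lemmas \ref{lem:ubound} and \ref{lem:uhbound}: since
\begin{align*}
\|\b{u}-\b{u_h}\|_{1,h}\le \|\b{u}\|_{H^1(\O)}+\|\b{u_h}\|_{1,h}\le C\big(\|\b{f}\|_{L^2(\O)}+\|\b{g}\|_{L^2(\Gamma_F)}\big),
\end{align*}
one gets $\|\b{u}-\b{u_h}\|_{1,h}^{2m_n}=\|\b{u}-\b{u_h}\|_{1,h}^{2m_n-2}\,\|\b{u}-\b{u_h}\|_{1,h}^{2}\le C\|\b{u}-\b{u_h}\|_{1,h}^{2}$ with $C$ depending only on the data --- which is precisely the dependence hidden in the constant of \eqref{dm} in the paper's own argument. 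With that substitution your proof is complete and equivalent to the paper's.
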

\begin{proof}
 As $\eta_h^2=\eta_1^2+\eta_2^2+\eta_3^2+\eta_4^2+\eta_5^2+\eta_6^2 $. Now $\eta_1, \eta_2,\eta_5$ are bounded above by the terms on the right hand side by using previous lemma with $\b{v_h} = \b{u_h}$.\\
To bound $\eta_3 $, we have 
    \begin{align*}
    \sum_{e \in\mathcal{E}^0_h}\frac{\eta}{h_e}\|\sjump{\b{u_h}}\|^2_{L^2(e)} &\leq \sum_{e \in\mathcal{E}^0_h}\frac{\eta}{h_e}\|\sjump{\b{ u-u_h}}\|^2_{L^2(e)}+ \sum_{e \in\mathcal{E}^0_h}\frac{\eta}{h_e}\|\sjump{\b{u}}\|^2_{L^2(e)} \\
    &\leq \norm{\b{u-u_h}}^2_h.
    \end{align*}
Further to bound $\eta_4$, we have
    \begin{align*}
     \sum_{e \in \mathcal{E}^C_h}h_e \|\b{\sigma}_{h\tau}(\b{u_h})+{c_\tau} \b{{\lambda_{h\tau}}}\|^2_{L_2(e)} &\leq \sum_{e \in \mathcal{E}^C_h}h_e \|\b{\sigma}_{h\tau}(\b{u_h})+{c_\tau} \b{{\lambda_{\tau}}}\|^2_{L_2(e)}+\sum_{e \in \mathcal{E}^C_h}h_e\|c_\tau (\b{\lambda_{h\tau}-\lambda_{\tau})}\|^2_{L^2(e)} \nonumber\\
    &\leq \sum_{e \in \mathcal{E}^C_h}h_e \|\b{\sigma}_{h\tau}(\b{u_h})+{c_\tau} \b{{\lambda_{\tau}}}\|^2_{L_2(e)}+\sum_{e \in \mathcal{E}^C_h}h_e\|c_\tau\|^2_{L^{\infty}(e)}\|\b{ \lambda_{h\tau}-\lambda_{\tau}}\|^2_{L^2(e)},
    \end{align*}
    therein, a use of last lemma will yield the desired bound.
\par
\noindent
In order to bound $\eta_6$, let $e \in \mathcal{E}_h^C$ be arbitrary. Now, using \eqref{1.7} and triangle inequality, we have
\begin{align*}
    \|{\sigma}_{hn}(\b{u_h})+c_n(u_{hn}-g_a)^{m_n}_+\|_{L^2(e)} &\leq
    \|{\sigma}_{hn}(\b{u_h}-\b{u})\|_{L^2(e)} + \|c_n(u_{hn}-g_a)^{m_n}_+ - c_n(u_{n}-g_a)^{m_n}_+\|_{L^2(e)}.
\end{align*}
A use of \eqref{dm} yields
\begin{align}\label{5.28}
\|{\sigma}_{hn}(\b{u_h})+c_n(u_{hn}-g_a)^{m_n}_+\|_{L^2(e)} &\leq C \|\b{u-u_h}\|_{1,h} + \|{\sigma}_{hn}(\b{u_h}-\b{u})\|_{L^2(e)},
\end{align}
where $C$ is a constant depending on load vectors. Therefore, squaring $\eqref{5.28}$ and summing over all $e \in \mathcal{E}^C_h$ and using the identity $\sum_{e\in \mathcal{E}_h^C} |h_e|=|\Gamma_C|$, we find
\begin{align}\label{5.29}
 \sum_{e \in \mathcal{E}^C_h}h_e\|{\sigma}_{hn}(\b{u_h})+c_n(u_{hn}-g_a)^{m_n}_+\|^2_{L^2(e)}\leq \norm{\b{u-u_h}}^2_h + \sum_{e \in \mathcal{E}^C_h}h_e\|\b{\sigma}_{hn}(\b{u_h-u})\|^2_{L^2(e)} 
\end{align}
This completes the proof.
\end{proof}
\bigbreak
\section{Medius Analysis} \label{sec:Apriori}
In this section, a priori error bounds are derived with minimal regularity assumption on the exact solution $\b{u}$ of \eqref{1.10}, say $\b{u} \in H^{(1+s)}(\Omega)$ for $s \in ( 0,1]$. The name medius analysis indicates that both a priori and a posteriori techniques are employed in this analysis \cite{Gudi:2010:Medius}.
\begin{theorem}
Let $\b{u}$ and $\b{u_h}$ be the solution of continuous problem\eqref{1.10} and discrete problem \eqref{4.1}, respectively. Then, for any $\b{v_h}\in \b{V_h}$, we have 
    \begin{align*}
  \norm{\b{u-u_h}}^2_h &\leq \underset{\b{v_h}\in \b{V_h}}{inf}\Big(\norm{\b{v_{h}-u}}^2_h + \sum_{e \in \mathcal{E}^C_h}h_e^{1/2}\|c_{\tau}\|_{L^\infty(e)}\|\b{u-v_{h}}\|_{L^2(e)}\\&+ \sum_{e \in \mathcal{E}^C_h}h_e\|\b{\sigma}_{hn}(\b{v_h-u})\|^2_{L^2(e)}\Big)+Osc(\b{f})^2+Osc(c_\tau)^2+Osc(\b{\lambda_\tau})^2.
\end{align*}
\end{theorem}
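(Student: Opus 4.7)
I would mimic the structure of the reliability argument of Theorem 4.1, with $\b{v_h}$ playing the role of the discrete comparison and the efficiency bounds of the preceding lemma used to transfer control from $\b{u_h}$-residuals to $\b{v_h}$-residuals. For arbitrary $\b{v_h}\in\b{V_h}$, the triangle inequality
$$\norm{\b{u-u_h}}_h^2 \leq 2\norm{\b{u-v_h}}_h^2 + 2\norm{\b{v_h-u_h}}_h^2$$
reduces the task to bounding $\norm{\b{v_h-u_h}}_h^2$, since the first summand already appears on the right-hand side of the claimed estimate.

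\textbf{Combining the two variational inequalities.} By coercivity of $B_h$ on $\b{V_h}$, $\norm{\b{v_h-u_h}}_h^2 \lesssim B_h(\b{v_h-u_h},\b{v_h-u_h}) = B_h(\b{v_h},\b{v_h-u_h}) - B_h(\b{u_h},\b{v_h-u_h})$. The discrete VI \eqref{4.1} gives an upper bound for $-B_h(\b{u_h},\b{v_h-u_h})$. Setting $\b{\phi}:=E_h(\b{v_h-u_h})\in\b{V_c}\subset\b{V}$ and testing the continuous VI \eqref{1.10} with $\b{v}=\b{u}-\b{\phi}\in\b{V}$ yields an upper bound for $-(\b{f},\b{\phi})$. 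Combining these two inequalities leads to
\begin{align*}
B_h(\b{v_h-u_h},\b{v_h-u_h}) &\leq \big[B_h(\b{v_h},\b{v_h-u_h}) - a(\b{u},\b{\phi}) - (\b{f},\b{v_h-u_h-\phi})\big] \\
&\quad + \big[j_n(\b{u_h},\b{v_h-u_h}) - j_n(\b{u},\b{\phi})\big] \\
&\quad + \big[j_\tau(\b{v_h}) - j_\tau(\b{u_h}) + j_\tau(\b{u-\phi}) - j_\tau(\b{u})\big],
\end{align*}
henceforth denoted $(A)+(B)+(C)$.

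\textbf{Residual estimates and the key difficulty.} Term $(A)$ is a consistency-type residual at $\b{v_h}$: since $\b{v_h}$ is piecewise linear, elementwise integration by parts kills the volume divergence, and the surviving edge terms are dominated (using Lemmas \ref{lem:Interpolation} and \ref{lem:EO}) by $\eta_1,\eta_2,\eta_5,\eta_6$ evaluated with $\b{v_h}$ in place of $\b{u_h}$; the preceding efficiency lemma then converts these into $\norm{\b{v_h-u}}_h^2$, the oscillations, and $h_e\|\b{\sigma}_{hn}(\b{v_h-u})\|_{L^2(e)}^2$. Term $(B)$ is treated by writing $\b{v_h-u_h}=(\b{v_h-u})+(\b{u-u_h})$ and invoking the monotonicity inequality \eqref{5.4} to discard the unfavorable sign, after which the identity $|a_+^m-b_+^m|\leq m|a-b|(|a|^{m-1}+|b|^{m-1})$, the embedding \eqref{im}, and Lemmas \ref{lem:ubound}--\ref{lem:uhbound} yield a bound of the form $\|\b{u-u_h}\|_{1,h}\|\b{v_h-u}\|_{L^p(\Gamma_C)}$, absorbable by Young's inequality. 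Term $(C)$ is the source of the atypical non-squared contribution and is the main technical obstacle: the reverse triangle inequalities $|\b{v_{h\tau}}|-|\b{u_{h\tau}}|\leq|\b{v_{h\tau}-u_{h\tau}}|$ and $|\b{u_\tau-\phi_\tau}|-|\b{u_\tau}|\leq|\b{\phi_\tau}|$ reduce $(C)$ to $\int_{\Gamma_C} c_\tau(|\b{v_h-u_h}|+|\b{\phi}|)\,ds$; splitting $\b{v_h-u_h}=(\b{v_h-u})+(\b{u-u_h})$ and $|\b{\phi}|\leq|\b{v_h-u_h}|+|E_h(\b{v_h-u_h})-(\b{v_h-u_h})|$, the contribution $\int_{\Gamma_C} c_\tau|\b{v_h-u}|\,ds$ produces precisely $\sum_e h_e^{1/2}\|c_\tau\|_{L^\infty(e)}\|\b{u-v_h}\|_{L^2(e)}$ via edgewise Cauchy-Schwarz, while the remaining pieces $\int c_\tau|\b{u-u_h}|\,ds$ and the enrichment error $\int c_\tau|E_h(\b{v_h-u_h})-(\b{v_h-u_h})|\,ds$ are controlled by the trace inequality and Lemma \ref{lem:EO} and absorbed by Young's. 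Because the surviving friction term is linear (not quadratic) in $\b{u-v_h}$, it cannot be absorbed by Young's inequality and must be retained in the final bound. Absorbing the remaining quadratic pieces on the left and taking the infimum over $\b{v_h}\in\b{V_h}$ then yields the claim.
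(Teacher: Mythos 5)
Your overall skeleton (triangle inequality, coercivity of $B_h$, the discrete VI, the enriching operator, the efficiency lemma for the residual at $\b{v_h}$, monotonicity for $j_n$) matches the paper, but you replace the paper's key tool -- the Lagrange multiplier characterization of $\b{u}$ in Lemma \ref{ref:Cchar} -- by a direct test of the continuous VI \eqref{1.10} with $\b{v}=\b{u}-E_h(\b{v_h}-\b{u_h})$. This substitution is where the argument breaks, specifically in your term $(C)$. After the reverse triangle inequalities you arrive at $\int_{\Gamma_C}c_\tau\big(|\b{v_{h\tau}}-\b{u_{h\tau}}|+|\b{\phi_\tau}|\big)\,ds$, and you then split $|\b{v_h}-\b{u_h}|\le|\b{v_h}-\b{u}|+|\b{u}-\b{u_h}|$. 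The resulting piece $\int_{\Gamma_C}c_\tau|\b{u}-\b{u_h}|\,ds$ is \emph{linear} in the error and carries no power of $h$: the best you can do is $\int_{\Gamma_C}c_\tau|\b{u}-\b{u_h}|\,ds\le C\,\norm{\b{u}-\b{u_h}}_h$ with $C\sim\|c_\tau\|_{L^\infty(\Gamma_C)}|\Gamma_C|^{1/2}$, and Young's inequality then leaves behind the non-vanishing constant $C^2/(4\epsilon)$ after the $\epsilon\norm{\b{u}-\b{u_h}}_h^2$ is absorbed. The final estimate would read $\norm{\b{u}-\b{u_h}}_h^2\le \mathrm{const}+\cdots$, which is vacuous. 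This is not a cosmetic issue: a linear, $O(1)$-coefficient term in $\b{u}-\b{u_h}$ can never be absorbed into a quadratic left-hand side.

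The paper avoids this precisely through the multiplier: writing the friction contribution as $R_3=j_\tau(\b{v_h})-j_\tau(\b{u_h})-g(\b{\lambda_\tau},\b{v_h}-\b{u_h})$, the identities $|\b{\lambda_\tau}|\le 1$ and $\b{\lambda_\tau}\cdot\b{u_\tau}=|\b{u_\tau}|$ make the $\b{u_h}$-dependent pieces cancel with the correct sign ($-\int c_\tau|\b{u_{h\tau}}|+\int c_\tau\b{\lambda_\tau}\cdot\b{u_{h\tau}}\le 0$), and what survives is exactly $2\int_{\Gamma_C}c_\tau|\b{u_\tau}-\b{v_{h\tau}}|\,ds$ -- no $\b{u}-\b{u_h}$ contamination and no enrichment-error term. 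A secondary consequence of dropping the multiplier is that your term $(A)$ has nothing to pair with $\int_{\Gamma_C}\b{\sigma}_{h\tau}(\b{v_h})\cdot\b{\xi}_\tau\,ds$ after integration by parts (the paper pairs it with $g(\b{\lambda_\tau},\b{\xi})$ to form $\eta_4$), and likewise you would need to add and subtract $j_n(\b{v_h},\b{\xi})$ to form $\eta_6$; these are repairable bookkeeping issues, but even the best repair of $(C)$ along your lines (keeping $E_h\b{v_h}-\b{v_h}$ and $E_h\b{u_h}-\b{u_h}$ instead of $\b{u}-\b{u_h}$) leaves an extra $O(h)\sum_e h_e\|c_\tau\|^2_{L^2(e)}$ data term that is absent from the stated theorem. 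To close the proof you should adopt the paper's route: use Lemma \ref{ref:Cchar} to eliminate $(\b{f},E_h\b{\phi})$, and exploit the multiplier identities in the friction term.
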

\begin{proof}
Let $\b{v_h}$ be any  arbitrary element in ${\b{V_h}}$. Using triangle inequality and identity $(a+b)^2\leq 2(a^2+b^2)$, we get
    \begin{align}\label{6.1}
    \norm{\b{u-u_h}}^2_h\leq 2(\norm{\b{u-v_h}}^2_h+ \norm{\b{v_h-u_h}}^2_h).
    \end{align}
Setting $\b{\phi}=\b{v_h-u_h}$, and using coercivity of bilinear form $B_h(\cdot,\cdot)$ w.r.t. $\norm{\cdot}_h$, Lemma \ref{ref:Cchar} and equation \eqref{4.1}, we find
\begin{align*}
\alpha \norm{\b{v_h-u_h}}^2_h&\leq B_h(\b{v_h-u_h,v_h-u_h})\\
&\leq B_h(\b{v_h,\phi}) +j_n(\b{u_h,\phi})+j_\tau(\b{v_h})-j_\tau(\b{u_h})-(\b{f},\b{\phi})\\
&\leq B_h(\b{v_h},\b{\phi}-E_h\b{\phi}) + B_h(\b{v_h},E_h\b{\phi})-(\b{f}, \b{\phi}-E_h\b{\phi})-(\b{f},E_h\b{\phi})+j_n(\b{u_h,\phi})\\&+j_\tau(\b{v_h})-j_\tau(\b{u_h})\\
&= B_h(\b{v_h},\b{\phi}-E_h\b{\phi})+B_h(\b{v_h},E_h\b{\phi})- (\b{f}, \b{\phi}-E_h\b{\phi})+j_n(\b{u_h,\phi}) +j_\tau(\b{v_h})\\&-j_\tau(\b{u_h})
-a(\b{u},E_h\b{\phi})-g(\b{\lambda_\tau},E_h\b{\phi})-j_n(\b{u},E_h\b{\phi})\\
&= B_h(\b{v_h},\b{\phi}-E_h\b{\phi})- (\b{f}, \b{\phi}-E_h\b{\phi})+g(\b{\lambda_\tau,\phi}-E_h\b{\phi})+j_n(\b{v_h,\phi}-E_h\b{\phi})\\
&+B_h(\b{v_h},E_h\b{\phi})-a(\b{u},E_h\b{\phi})+j_\tau(\b{v_h})-j_\tau(\b{u_h})-g(\b{\lambda_\tau,\phi}) +j_n(\b{u_h,\phi})\\&-j_n(\b{u},E_h\b{\phi})-j_n(\b{v_h,\phi}-E_h\b{\phi})\\
&= R_1+ R_2+ R_3+ R_4,
\end{align*}
where, \begin{align*}
R_1 &= B_h(\b{v_h},\b{\phi}-E_h\b{\phi})- (\b{f}, \b{\phi}-E_h\b{\phi})+g(\b{\lambda_\tau,\phi}-E_h\b{\phi})+j_n(\b{v_h,\phi}-E_h\b{\phi}),\\
R_2 &= B_h(\b{v_h,E_h\b{\phi}})-a(\b{u},E_h\b{\phi}),\\
R_3 &= j_\tau(\b{v_h})-j_\tau(\b{u_h})-g(\b{\lambda_\tau,\phi}),\\
R_4 &= j_n(\b{u_h,\phi})-j_n(\b{u},E_h\b{\phi})-j_n(\b{v_h,\phi}-E_h\b{\phi}).
\end{align*}
Now, we will estimate $R_1$, $R_2$, $R_3$ and $R_4$ one by one. In order to estimate $R_1$, let $\b{\xi}=\b{\phi}-E_h\b{\phi} $ and thereafter using integration by parts in the first term and gather the resulting terms, we find
   \begin{align*}
  R_1 &= a_h(\b{v_h,\xi})+b_h(\b{v_h,\xi})- (\b{f}, \b{\xi})+g(\b{\lambda_\tau,\b{\xi}})+j_n(\b{v_h,\b{\xi}})\\
   &=\sum_{T\in \mathcal{T}_h}\int_{T}\b{\sigma}(\b{v_h}):\b{\epsilon}(\b{\xi})~dx+ b_h(\b{v_h,\xi})- (\b{f}, \b{\xi})+g(\b{\lambda_\tau,\b{\xi}})+j_n(\b{v_h,\b{\xi}})\\
   &=\sum_{e\in \mathcal{E}_h^i}\int_{e}\sjump{\b{\sigma}_h(\b{v_h})}\cdot\smean{\b{\xi}}~ds  \nonumber +\sum_{e\in \mathcal{E}_h^0}\int_{e}\smean{\b{\sigma}_h(\b{v_h)}}:\sjump{\b{\xi}}~ds
     +\sum_{e\in \mathcal{E}_h^F \cup \mathcal{E}_h^C \ }\int_{e}\b{\sigma}_h(\b{v_h)}\b{n_e}\cdot\b{\xi}~ds \\&+  b_h(\b{v_h,\xi})-(\b{f}, \b{\xi})+g(\b{\lambda_\tau,\b{\xi}})+j_n(\b{v_h,\b{\xi}})\\
      &= -\sum_{T\in \mathcal{T}_h} \int_{T} \b{f\cdot \xi}~dx
     +\sum_{e \in \mathcal{E}_h^F}\int_{e}(\b{\sigma}_h(\b{v_h})\b{n_e}-\b{g})\cdot \b{\xi}~ds + \sum_{e\in \mathcal{E}_h^i}\int_{e}\sjump{\b{\sigma}_h(\b{v_h}}\cdot
     \smean{\b{\xi}}~ds\\ &+\sum_{e\in \mathcal{E}_h^0}\int_{e}\smean{\b{\sigma}_h(\b{v_h})}:\sjump{\b{\xi}}~ds +b_h(\b{v_h,\xi}) + \sum_{e\in \mathcal{E}_h^C}\int_{e}(\b{\sigma}_{hn}(\b{v_{h}})+ c_{n}(v_{hn}-g_a)_+^{m_n}) \cdot \xi_n~ds \\&+\sum_{e \in \mathcal{E}_h^C} \int _e (\b{\sigma}_{h\tau}(\b{v_h})+c_\tau \b{\lambda_{\tau})\cdot \xi_\tau}~ds.
   \end{align*}
It can be observed that the following estimate holds for all the DG methods introduced in Section 3
   \begin{align}\label{6.2}
    \sum_{e\in \mathcal{E}_h^0}\int_{e}\smean{\b{\sigma}_h(\b{v_h})}:\sjump{\b{\xi}}~ds +b_h(\b{v_h,\xi}) \leq \bigg(\sum_{e \in E_h^0} \int_{e}\frac{1}{h_e}\sjump{\b{v_h}}^2~ds\bigg)^{1/2}\norm{\b{\phi}}_h.
   \end{align}
Using \eqref{6.2} and the similar arguments used in Theorem 4.1, we obtain
   \begin{align*}
    R_1\leq \eta(\b{v_h}) \norm{\b{\phi}}_h,
    \end{align*}
where,
    \begin{align*}
     \eta(\b{v_h})^2 =& \sum_{T \in \mathcal{T}_h} h_T^2\|\b{f}\|^2_{L^2(T)}+\sum_{e \in \mathcal{E}_h^i} h_e\|\sjump{\b{\sigma}_h(\b{v_h})}~ \|^2_{L^2(e)}+ \sum_{e \in \mathcal{E}_h^0} \frac{\eta}{h_e}\|\sjump{\b{v_h}}\|^2_{L^2(e)}\\
     &+\sum_{e \in \mathcal{E}_h^C} h_e\|\b{\sigma}_{h\tau}(\b{v_h})+  c_{\tau}\b{\lambda_{\tau}}\|^2_{L^2(e)}+ \sum_{e \in \mathcal{E}_h^F} h_e\|\b{\sigma}_h(\b{v_h})\b{n}-\b{g}\|^2_{L^2(e)}\\ &+
     \sum_{e \in \mathcal{E}_h^C} h_e\|\b{\sigma}_{hn}(\b{v_h})+c_n(v_{hn}-g_a)^{m_n}_+\|^2_{L^2(e)}.
     \end{align*}
A use of Young's inequality and Lemma 4.2 yields
\begin{align*}
    R_1 &\leq \frac{1}{\beta}(\norm{\b{v_h-u}}_h^2 +  \|\b{v_h-u}\|^{2m_n}_{1,h} + Osc(\b{f})^2+Osc(c_\tau)^2+Osc(\b{\lambda_\tau})^2\\&+ \sum_{e \in \mathcal{E}^C_h}h_e\|\b{\sigma}_{hn}(\b{v_h-u})\|^2_{L^2(e)}) + \beta\norm{\b{\phi}}_h^2.
\end{align*}
where $\beta>0$ is arbitrary. Using the definition of $a(\cdot,\cdot)$ and $a_h(\cdot, \cdot)$, (3.2), Lemma \ref{lem:EO} and Young's inequality, the bound on $R_2$ can be obtained as follows: 
    \begin{align*}
      R_2 &= B_h(\b{v_h,E_h\b{\phi}})-a(\b{u},E_h\b{\phi})\\ 
    &=a_h(\b{v_h,E_h\b{\phi}})+b_h(\b{v_h,E_h\b{\phi}})-a(\b{u},E_h\b{\phi})\\
     &=\sum_{T \in \cT_h} |\b{u-v_h}|_{H^1(T)}|E_h\b{\phi}|_{H^1(T)}+\bigg(\int_{\mathcal{E}_h^0}h_e^{-1}\sjump{\b{v_h-u}}^2~ds \bigg)^{1/2}|E_h\b{\phi}|_{H^1(\Omega)}\\
     &\leq \norm{\b{u-v_h}}_h~|E_h\b{\phi}|_{H^1(\Omega)}\\
     &\leq \norm{\b{u-v_h}}_h~\norm{\b{\phi}}_h\\
     &\leq \frac{1}{\beta_1}\norm{\b{u-v_h}}_h^2 + \beta_1\norm{\b{\phi}}_h^2,
     \end{align*}
where $\beta_1>0$ is arbitrary.
In order to estimate $R_3$, we will use $\b{u_\tau}\cdot \b{\lambda_\tau}=|\b{u_\tau}|$ and $|\b{\lambda_\tau}| \leq 1$ a.e. on $\Gamma_C$  and find
    \begin{align*}
    R_3 &= j_\tau(\b{v_h})-j_\tau(\b{u_h})-g(\b{\lambda_\tau,\phi})\\
     &= \int_{\Gamma_C}c_{\tau}|\b{v_{h\tau}}|~ds-\int_{\Gamma_C}c_{\tau}|\b{u_{h\tau}}|~ds-\int_{\Gamma_C}c_{\tau}\b{\lambda_\tau} \cdot \b{v_{h \tau}}~ds+ \int_{\Gamma_C}c_{\tau}\b{\lambda_\tau} \cdot \b{u_{h \tau}}~ds\\
    &\leq \int_{\Gamma_C}c_{\tau}|\b{v_{h\tau}}|~ds-\int_{\Gamma_C}c_{\tau}\b{\lambda_\tau} \cdot \b{v_{h \tau}}~ds\\
     &=\int_{\Gamma_C}c_{\tau}\big(|\b{v_{h\tau}}|-|\b{u_\tau}| \big)~ds+\int_{\Gamma_C}c_{\tau}\b{\lambda_\tau} \cdot \big(\b{u_\tau-v_{h \tau}})~ds\\
     &\leq 2\int_{\Gamma_C}c_{\tau}|\b{u_\tau-v_{h\tau}}|~ds\\
    &\leq 2\sum_{e\in \mathcal{E}^C_h}\|c_\tau\|_{L^\infty(e)}h_e^{1/2}\|\b{u-v_h}\|_{L^2(e)}.
     \end{align*}
A use of monotoncity argument \cite{Han:1993:FCPNC}, Cauchy Holder's inequality, identity $|(a)^m_+-(b)^m_+|\leq m|a-b|\big(|a|^{m-1}+|b|^{m-1}\big)~ a,~b \in \mathbb{R}, m\geq1$, \eqref{im} and Lemma 2.4 in $R_4$ yields
    \begin{align*}
     R_4 &= j_n(\b{u_h,\phi})-j_n(\b{u},E_h\b{\phi})-j_n(\b{v_h,\phi}-E_h\b{\phi})\\
     &= j_n(\b{u_h,\b{v_h-u_h}})-j_n(\b{u},E_h\b{\phi})-j_n(\b{v_h,\phi}-E_h\b{\phi})+j_n(\b{v_h,\b{v_h-u_h}})\\&-j_n(\b{v_h,\b{v_h-u_h}})\\
      &\leq j_n(\b{v_h,\b{v_h-u_h}})-j_n(\b{v_h,\phi}-E_h\b{\phi})-j_n(\b{u},E_h\b{\phi})\\
      &=j_n(\b{v_h,\b{\phi}})-j_n(\b{v_h,\phi}-E_h\b{\phi})-j_n(\b{u},E_h\b{\phi})\\
      &=j_n(\b{v_h},{E_h\b{\phi}})-j_n(\b{u},E_h\b{\phi})\\
      &= \int_{\Gamma_C}((c_n(v_{hn}-g_a)^{m_n}_+ - c_n(u_{n}-g_a)^{m_n}_+) (E_{h}\b{\phi})_n~ds\\
      &\leq \| c_n(v_{hn}-g_a)^{m_n}_+ -c_n(u_n-g_a)^{m_n}_+ \|_{L^2(\Gamma_C)}\|(E_{h}\b{\phi})_n\|_{L^2(\Gamma_C)}\\
      &\leq \| c_n\|_{L^{\infty}{(\Gamma_C)}}\|(v_{hn}-g_a)^{m_n}_+ - (u_n-g_a)^{m_n}_+ \|_{L^2(\Gamma_C)}\|E_{h}\b{\phi}\|_{H^1(\Omega)}\\
      &\leq\| c_n\|_{L^{\infty}{(\Gamma_C)}}\|(v_{hn}-g_a)^{m_n}_+ - (u_n-g_a)^{m_n}_+ \|_{L^2(\Gamma_C)}\norm{\b{\phi}}_h.
      \end{align*}
Following the similar arguments,  used in proving $(v)$ of Lemma 4.2, we obtain
   \begin{align*}
    R_4 &\leq C\bigg(\|\b{v_{h}-u}\|^{m_n}_{1,h}+\|\b{v_{h}-u}\|_{1,h}\bigg)\norm{\b{\phi}}_h.
    \end{align*}
Further, Young's inequality yields
    \begin{align*}
    R_4 &\leq C\bigg(\|\b{v_{h}-u}\|^{2m_n}_{1,h}+\|\b{v_{h}-u}\|^2_{1,h}\bigg)+ {\beta_2}\norm{\b{\phi}}^2_h.
    \end{align*}
where $\beta_2>0$ is arbitrary.
Combining the bounds on $R_1$, $R_2$, $R_3$ and $R_4$, and choosing $\beta, \beta_1$ and $\beta_2$ sufficiently small, we obtain
\begin{align}\label{6.3}
   \norm{\b{v_h-u_h}}^2_h &\leq C\bigg( \|(\b{v_{h}-u})\|^{2m_n}_{1,h}+\norm{\b{v_{h}-u}}^2_h +\sum_{e \in \mathcal{E}_h^C}h_e^{1/2}\|c_{\tau}\|_{L^{\infty}(e)}\|\b{u-v_{h}}\|_{L^2(e)} \nonumber \\&+ \sum_{e \in \mathcal{E}^C_h}h_e\|\b{\sigma}_{hn}(\b{v_h-u})\|^2_{L^2(e)}+ Osc(\b{f})^2+Osc(c_\tau)^2+Osc(\b{\lambda_\tau})^2 \bigg).
\end{align}
Thus, using $\eqref{6.3}$ in $\eqref{6.1}$, we obtain
\begin{align*}
    \norm{\b{u-u_h}}^2_h &\leq C \bigg(\|(\b{v_{h}-u})\|^{2m_n}_{1,h}+\norm{\b{v_{h}-u}}^2_h +\sum_{e \in \mathcal{E}_h^C} h_e^{1/2}\|c_{\tau}\|_{L^{\infty}(e)}\|\b{u-v_{h}}\|_{L^2(e)} \nonumber \\  &+\sum_{e \in \mathcal{E}^C_h}h_e\|\b{\sigma}_{hn}(\b{v_h-u})\|^2_{L^2(e)}+ Osc(\b{f})^2+Osc(c_\tau)^2+Osc(\b{\lambda_\tau})^2 \bigg) \\
    &\leq C\underset{\b{v_h}\in \b{V_h}}{inf}\Big(\|\b{v_{h}-u}\|^{2m_n}_{1,h}+\norm{\b{v_{h}-u}}^2_h +\sum_{e \in \mathcal{E}^C_h}h_e^{1/2}\|c_{\tau}\|_{L^{\infty}(e)}\|\b{u-v_{h}}\|_{e}\\&+ \sum_{e \in \mathcal{E}^C_h}h_e\|\b{\sigma}_{hn}(\b{v_h-u})\|^2_{L^2(e)}\Big)+Osc(\b{f})^2+Osc(c_\tau)^2+Osc(\b{\lambda_\tau})^2.
    \end{align*}
  Since $m_n \geq 1$ which implies $2 m_n\geq 2$, therefore
\begin{align*}
    \underset{\b{v^h}\in \b{V_h}}{inf}\|\b{v_h-u}\|_{1,h}^{2m_n} \leq C \underset{\b{v_h}\in \b{V_h}}{inf}\|\b{v_h-u}\|_{1,h}^{2}.
\end{align*}
Hence,
\begin{align*}
  \norm{\b{u-u_h}}^2_h &\leq \underset{\b{v_h}\in \b{V_h}}{inf}\Big(\norm{\b{v_{h}-u}}^2_h +\sum_{e \in \mathcal{E}^C_h}h_e^{1/2}\|c_{\tau}\|_{L^{\infty}(e)}\|\b{u-v_{h}}\|_{L^2(e)}\\&+ \sum_{e \in \mathcal{E}^C_h}h_e\|\b{\sigma}_{hn}(\b{v_h-u})\|^2_{L^2(e)}\Big)+Osc(\b{f})^2+Osc(c_\tau)^2+Osc(\b{\lambda_\tau})^2.
\end{align*}
\end{proof}
The following result is a consequence of the last theorem with the choice of $\b{v_h}$ as in Lemma \ref{lem:Interpolation}.
\begin{theorem} \label{thm:apriori}
Suppose $\b{u} \in H^{(1+s)}(\Omega)$ for some $s \in ( 0,1]$. Then, there exists a constant $C>0$, depending in the shape regularity of $\mathcal{T}_h$ such that
\begin{align*}
\norm{\b{u-u_h}}_h &\leq C h^s.
\end{align*}
\end{theorem}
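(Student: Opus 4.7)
The plan is to apply the preceding medius-analysis theorem with the particular choice $\b{v_h} \in \b{V_c} \subset \b{V_h}$ equal to the conforming Cl\'ement-type quasi-interpolant of $\b{u}$ furnished by Lemma \ref{lem:Interpolation}. This is a natural choice because $\b{v_h}$ is continuous across interior edges and vanishes on $\Gamma_D$, while $\b{u} \in \b{V}$ also satisfies these properties; consequently $\sjump{\b{v_h}-\b{u}} = \b{0}$ on every edge of $\cE_h^o$, so the stabilization seminorm $|\b{v_h}-\b{u}|_*$ vanishes and $\norm{\b{v_h}-\b{u}}_h$ collapses to the piecewise energy seminorm. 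This reduces the problem to bounding a handful of volume and boundary pieces.

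Given the extra regularity $\b{u} \in H^{1+s}(\Omega)$, the standard Cl\'ement estimates can be upgraded on each element $T$ to
$$h_T^{-1}\|\b{u}-\b{v_h}\|_{L^2(T)}+|\b{u}-\b{v_h}|_{H^1(T)} \le C\, h_T^{s}\,|\b{u}|_{H^{1+s}(\cT_T)},$$
from which one immediately obtains $\norm{\b{u}-\b{v_h}}_h \le C h^s \|\b{u}\|_{H^{1+s}(\Omega)}$. For the friction boundary term in the theorem I would use a trace inequality together with the $L^2$-interpolation bound on edges touching $\Gamma_C$, yielding $h_e^{1/2}\|\b{u}-\b{v_h}\|_{L^2(e)} \le C h_T^{1+s}|\b{u}|_{H^{1+s}(\cT_T)}$, and then sum; since $\sum_e h_e$ is bounded by $|\Gamma_C|$ and $1+s \ge s$, this contribution is of order $h^{2+s}$ and is absorbed by $h^{2s}$ for $s\in(0,1]$. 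The data-oscillation terms $Osc(\b{f})$, $Osc(c_\tau)$, $Osc(\b{\lambda_\tau})$ are each $O(h)$ under the usual piecewise-regularity assumption on the data, and hence also dominated by $h^s$.

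The only delicate piece is the normal-stress boundary contribution
$$\sum_{e \in \cE_h^C} h_e\,\|\b{\sigma}_{hn}(\b{v_h}-\b{u})\|_{L^2(e)}^2,$$
because $\b{v_h}-\b{u}$ carries only $H^{1+s}$ regularity with fractional $s$, so the discrete trace inequality \eqref{2.1} cannot be applied to the continuous part directly. I would split $\b{\sigma}_{hn}(\b{v_h}-\b{u}) = \b{\sigma}_{hn}(\b{v_h}) - \sigma_n(\b{u})$, apply the inverse-type estimate to the first (polynomial) summand to get $h_e^{1/2}\|\b{\sigma}_{hn}(\b{v_h})\|_{L^2(e)} \le C|\b{v_h}|_{H^1(T)}$, and control the second summand by a standard fractional trace theorem $\|\sigma_n(\b{u})\|_{L^2(e)} \le C h_T^{-1/2}\|\b{u}\|_{H^{1+s}(T)}$. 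Combining and using the interpolation bound gives an $O(h^s)$ contribution.

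Putting all these estimates into the inequality provided by Theorem 5.1, taking the infimum over $\b{v_h}$ (attained at our Cl\'ement interpolant up to constants) and extracting the square root yields $\norm{\b{u}-\b{u_h}}_h \le C h^s$. The principal obstacle, as indicated above, is the fractional-regularity trace estimate for the normal-stress term; everything else is a routine application of Lemmas \ref{lem:Interpolation} and the trace/inverse inequalities of Section \ref{sec:Prelims}.
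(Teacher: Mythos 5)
Your overall strategy --- invoke Theorem 5.1 with $\b{v_h}$ the conforming Cl\'ement interpolant of Lemma \ref{lem:Interpolation}, so that $|\b{u}-\b{v_h}|_*=0$ and only volume and contact-boundary terms remain --- is exactly the route the paper takes (the paper offers only the one-line remark that the theorem ``is a consequence of the last theorem with the choice of $\b{v_h}$ as in Lemma \ref{lem:Interpolation}''). However, two of the detailed estimates you supply do not close as written.

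First, the friction term. From $h_e^{1/2}\|\b{u}-\b{v_h}\|_{L^2(e)}\le C h_T^{1+s}|\b{u}|_{H^{1+s}(\mathcal{T}_T)}$ (which is correct), the sum over the $O(h^{-1})$ edges of $\mathcal{E}_h^C$ gives, after Cauchy--Schwarz, a bound of order $h^{1+s}\cdot h^{-1/2}=h^{1/2+s}$, not $h^{2+s}$ as you claim; the local seminorms $|\b{u}|_{H^{1+s}(\mathcal{T}_T)}$ do not sum to an $O(1)$ quantity without paying the factor $h^{-1/2}$. Since this term enters Theorem 5.1 linearly (not squared), it must be compared with $h^{2s}$: the inequality $h^{1/2+s}\le C h^{2s}$ holds only for $s\le 1/2$. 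For $s\in(1/2,1]$ this term dominates and yields only $\norm{\b{u}-\b{u_h}}_h\le Ch^{1/4+s/2}$, i.e.\ $h^{3/4}$ when $s=1$ --- which, notably, is the rate the paper's own Tables 6.1--6.2 exhibit. So your arithmetic here is wrong, and the corrected arithmetic shows the claimed $O(h^s)$ rate is not actually delivered by this argument for $s>1/2$ (a gap the paper's one-line justification shares).

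Second, the normal-stress term $\sum_{e\in\mathcal{E}_h^C}h_e\|\b{\sigma}_{hn}(\b{v_h}-\b{u})\|^2_{L^2(e)}$. Splitting into $\b{\sigma}_{hn}(\b{v_h})$ and $\sigma_n(\b{u})$ and bounding the two pieces \emph{separately} by $h_e^{1/2}\|\b{\sigma}_{hn}(\b{v_h})\|_{L^2(e)}\le C|\b{v_h}|_{H^1(T)}$ and $\|\sigma_n(\b{u})\|_{L^2(e)}\le Ch_T^{-1/2}\|\b{u}\|_{H^{1+s}(T)}$ cannot produce a decaying bound: after multiplying by $h_e$ and summing you obtain $\sum_{T\cap\Gamma_C\neq\emptyset}\bigl(|\b{v_h}|^2_{H^1(T)}+\|\b{u}\|^2_{H^{1+s}(T)}\bigr)$, which is $O(1)$ (at best $o(1)$ with no rate), not $O(h^{2s})$. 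The missing idea is cancellation: insert the elementwise constant $L^2$-projection $\overline{\b{\sigma}(\b{u})}$, apply the polynomial inverse estimate to $\b{\sigma}_{hn}(\b{v_h})-\overline{\sigma_n(\b{u})}$ (which is controlled by $|\b{u}-\b{v_h}|_{H^1(T)}+\|\b{\sigma}(\b{u})-\overline{\b{\sigma}(\b{u})}\|_{L^2(T)}=O(h_T^s)$), and use a scaled trace inequality of the form $\|w-\bar w\|^2_{L^2(e)}\le Ch_T^{2s-1}|w|^2_{H^s(T)}$ for the projection error. Even this repaired argument requires $s>1/2$ for $\b{\sigma}(\b{u})\in [H^s(\Omega)]^{2\times 2}$ to possess an $L^2$ edge trace; for $s\le 1/2$ one must instead exploit the identity $\sigma_n(\b{u})=-c_n(u_n-g_a)_+^{m_n}\in L^2(\Gamma_C)$ from \eqref{1.7}. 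Your instinct that this is ``the delicate piece'' is right, but the treatment you propose is the one that fails.
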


\begin{remark}
If the regularity of the continuous solution $\b{u}$ is $[H^2(\O)]^2$, then from Theorem \ref{thm:apriori} the  error term $\norm{\b{u-u_h}}_h$ converges with linear rate which is optimal. If $\b{u} \in [H^1(\O)]^2$,  one can easily show that $\norm{\b{u − v_h}}_h \rightarrow 0$ as $h \rightarrow 0$ using the density argument \cite{Ciarlet:1978:FEM}. Hence, whenever $h \rightarrow 0$, then $\norm{\b{u − u_h}}_h \rightarrow 0$.
\end{remark}

\begin{remark}
The abstract error estimate in Theorem 5.1 also holds for $m_n=0$.
\end{remark}
\section{Numerical Experiments} \label{sec:NumR}
In this section, we carry out the numerical experiments to illustrate the performance of a posteriori estimator derived in the Section \ref{sec:Apost} as well as the convergence behaviour of error on uniform meshes. In order to perform numerical experiments we have implemented the codes in Matlab 9.8.0 (R2020a). Uzawa algorithm \cite{Glowinski:2008:VI} is used to solve the discrete problem, therein we set $10^{-8}$ to be the relative error tolerance in the maximum norm. 

\par
\noindent
For illustrating the behaviour of error estimator, we use the following algorithm:
\begin{center}
 \textbf{SOLVE} $\longrightarrow$ \textbf{ESTIMATE} $\longrightarrow$ \textbf{MARK} $\longrightarrow$ \textbf{REFINE}
\end{center}

In the step \textbf{SOLVE},  the discrete problem is solved for $\b{u_h}$. Then, the error estimator $\eta_h$ is computed on each element in the  step \textbf{ESTIMATE} and D$\ddot{o}$rfler marking scheme \cite{Dorfler:1996:Afem} with parameter $\theta = 0.3$ is used in the step \textbf{MARK}. Finally in the last step \textbf{REFINE}, the marked elements undergo refinement using the newest vertex bisection algorithm and the above algorithm is repeated.
\par
Now, we present  numerical results for two test examples solved by SIPG and NIPG method. As the exact solution $\b{u}$ is unknown in both examples, error on uniform mesh is computed by calculating the difference between the discrete solutions $\b{u_h}$ obtained on the consecutive mesh.
 In these examples the Lam$\acute{e}$ parameters $\mu$ and $\lambda$ are computed by
\begin{align*}
\mu = \frac{E}{2(1+\nu)}~~ \text{and}~~\lambda = \frac{E\nu}{(1+\nu)(1-2\nu)}
\end{align*}
where, $E$ and $\nu$ denote the Young’s modulus and the Poisson ratio, respectively. For both the examples the penalty parameter $\eta$ is chosen to be $30\mu$. 
\begin{example} In this example, 
we consider the domain $\O$ as $(0,1)\times(0.05,1.05)$ and the following data (the unit $daN/mm^2$ stands for “decaNewtons per square millimeter”):
\begin{align*}
\Gamma_D &= \{1\}\times (0.05,1.05),
\\
\Gamma_F &= (\{0\}\times (0.05, 1.05))\cup ((0,1)\times \{1.05\}),\\
\Gamma_C &= (0,1)\times \{0.05\},\\
E &= 2000daN/mm^2,~~ \nu=0.4~~, \b{f}=(0,0)daN/mm^2,\b{g}=(200(5-y),-190)daN/mm^2,\\
c_\tau&=450,~ c_n=1,~m_n=1,~ g_a=0.05~mm.
\end{align*}
\end{example}
The convergence behavior of error for  SIPG and NIPG methods on the uniform mesh is shown in Table 6.1. Figure 6.1 describes the behavior of the  residual estimators for SIPG and NIPG methods, respectively on adaptive meshes. We observe that the estimator converges optimally on the adaptive mesh.
Figure 6.2 show the adaptive mesh refinement at a certain level for  SIPG and NIPG method. We observe the mesh is refined more near the intersection of the boundaries and near the contact edge, as it is evident that the body undergoes deformation under the action of traction. Hence, the singular behavior of the discrete solution is well captured by the estimator.
\begin{table}
\begin{tabular}{ |c|c|c| } 
 \hline
 ${h}$ & error & order of conv. \\
 \hline
 $2^{-1}$ & 4.7518 $\times$ $10^{-1}$ & -\\ 
 $2^{-2}$ & 2.9620 $\times$ $10^{-1}$  & 0.6818\\ 
 $2^{-3}$ & 1.7708 $\times$ $10^{-1}$ & 0.7421 \\ 
 $2^{-4}$ & 1.0731 $\times$ $10^{-1}$  & 0.7502\\
 $2^{-5}$ & 6.7152 $\times$ $10^{-2}$ & 0.7913 \\
 \hline
\end{tabular}
\quad
~~~~
\begin{tabular}{ |c|c|c| } 
 \hline
 ${h}$ & error & order of conv.\\
 \hline
 $2^{-1}$ & 4.8844 $\times$ $10^{-1}$ & -\\ 
 $2^{-2}$ & 3.0242 $\times$ $10^{-1}$  & 0.6916\\ 
 $2^{-3}$ & 1.807 $\times$ $10^{-1}$ & 0.7427 \\ 
 $2^{-4}$ & 1.0732 $\times$ $10^{-1}$  & 0.7519\\
 $2^{-5}$ & 6.7186 $\times$ $10^{-2}$ & 0.7934 \\ 
 \hline
\end{tabular}
\bigbreak
\caption{Errors and orders of
convergence for SIPG and NIPG methods on uniform mesh for Example 1}
\end{table}
\begin{figure*}[!ht]
	\begin{minipage}{7cm}
			
	\includegraphics[width=1.1\textwidth]{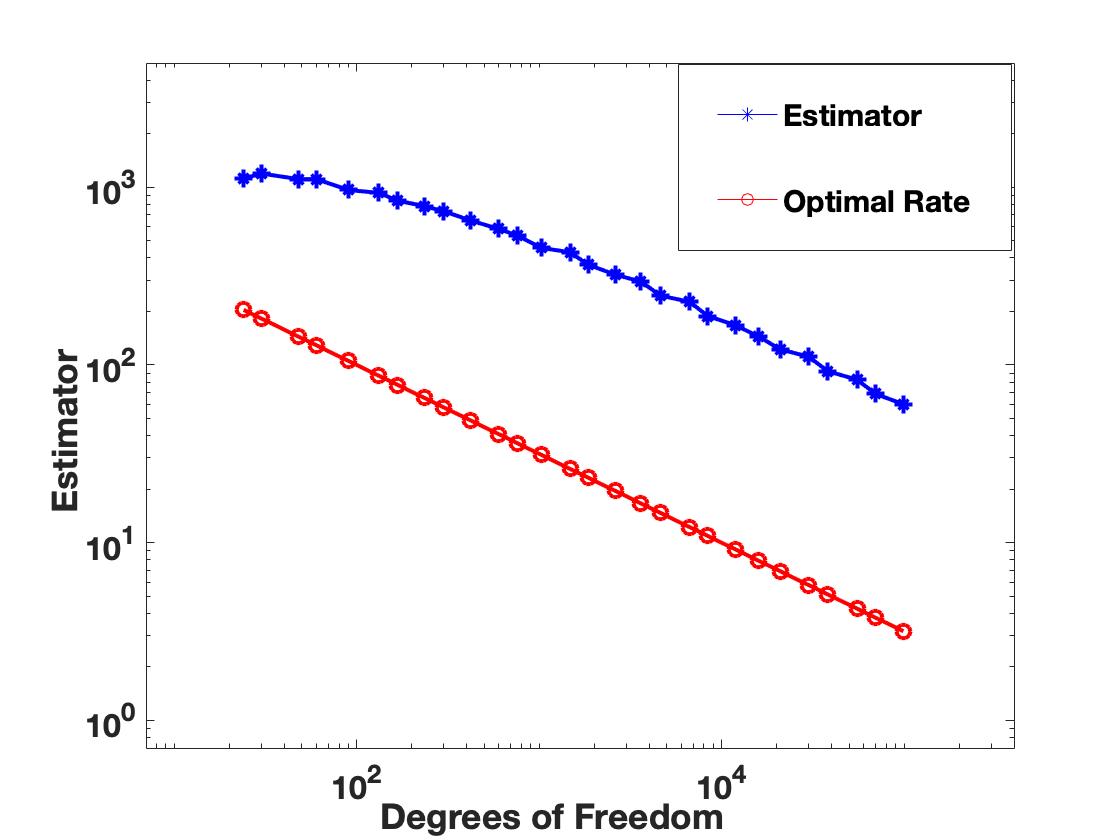}
	
	\end{minipage}
	\hspace{0.1cm}
	\begin{minipage}{7cm}		
	
	\includegraphics[width=1.1\textwidth]{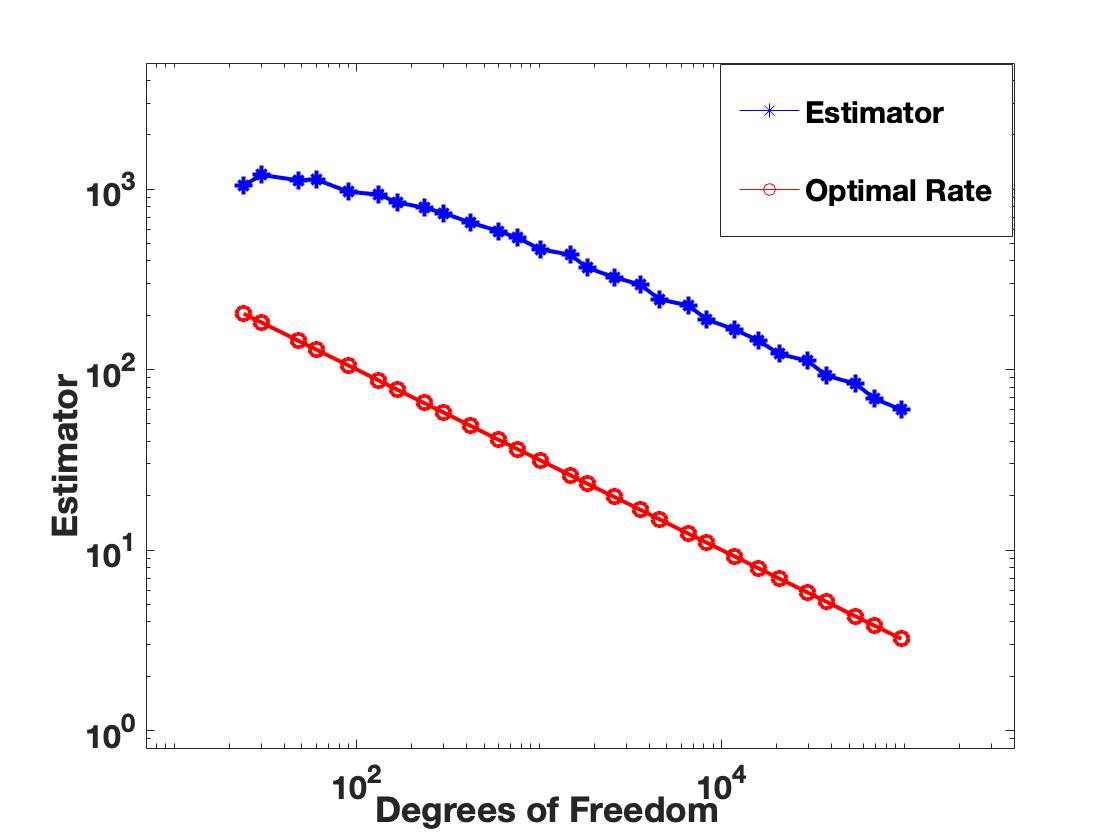}	

	\end{minipage}
\caption{Estimator for SIPG and NIPG method for Example 1}
	\label{0}
\end{figure*}

\begin{figure*}[!ht]
	\begin{minipage}{7cm}
			
	\includegraphics[width=1.1\textwidth]{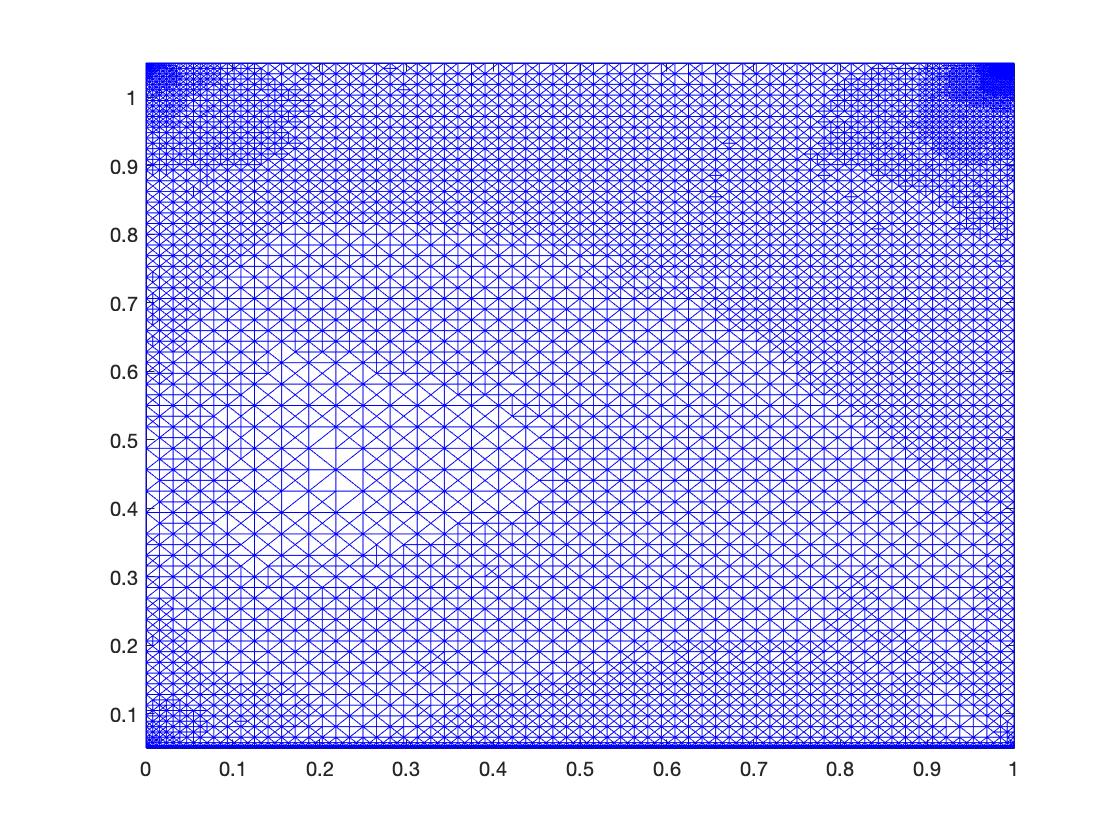}
	
	\end{minipage}
	\hspace{0.1cm}
	\begin{minipage}{7cm}		
	
	\includegraphics[width=1.1\textwidth]{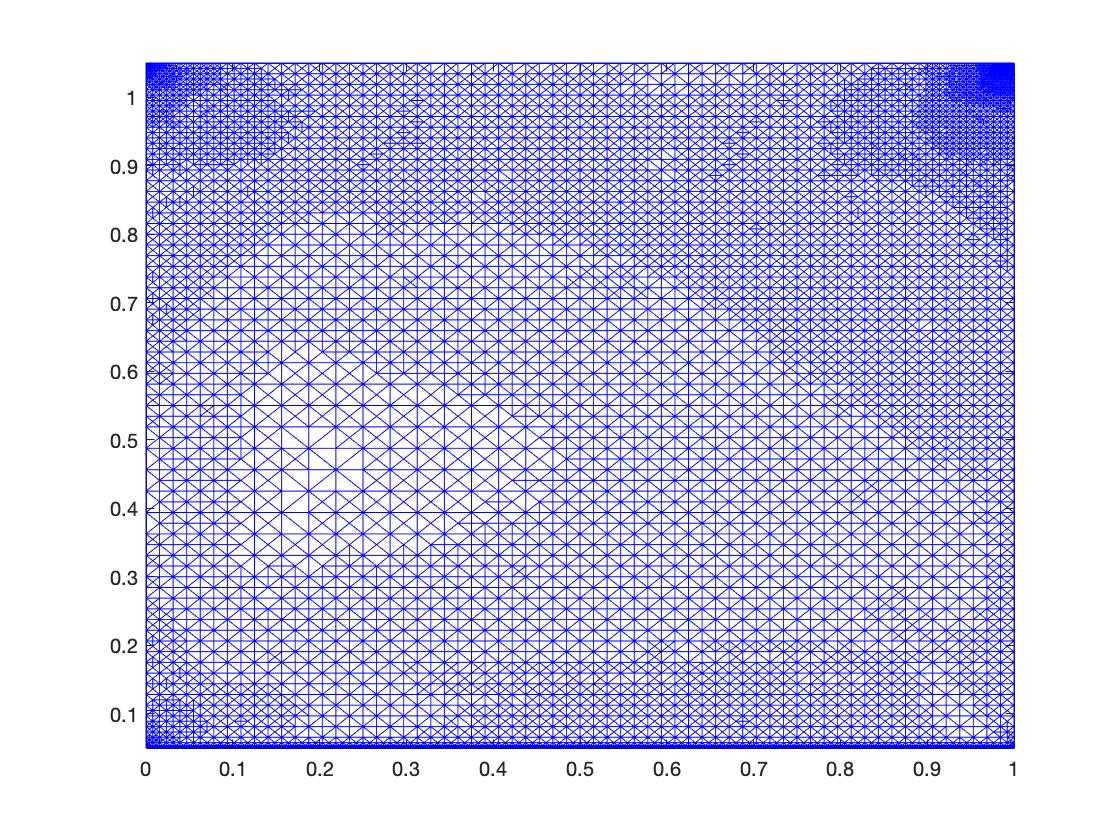}	

	\end{minipage}
\caption{Adaptive mesh for SIPG and NIPG methods for Example 1 at level 28}
	\label{2}
\end{figure*}

\begin{example} Therein, we consider the domain $\O$ as $(0,1)\times(0,1)$ together with the following data:
\begin{align*}
\Gamma_D &= (0,1)\times \{1\},
\\
\Gamma_F &= (\{0\}\times (0, 1))\cup (\{1\}\times (0, 1)),\\
\Gamma_C &= (0,1)\times \{0\},\\
E &= 2500daN/mm^2,~~ \nu=0.2~~, \b{f}=(0,0)daN/mm^2,\\
\b{g}&=(880, 0)daN/mm^2,~ c_\tau=250,~ c_n=1,~ g_a=0.00mm,~m_n=1.
\end{align*}
\end{example}
\begin{table}
\begin{tabular}{ |c|c|c| } 
 \hline
 ${h}$ & error & order of conv. \\
 \hline
 $2^{-1}$ & 6.7573 $\times$ $10^{-1}$ & -\\ 
 $2^{-2}$ & 4.1330 $\times$ $10^{-1}$  & 0.7091\\ 
 $2^{-3}$ & 2.4171 $\times$ $10^{-1}$ & 0.7735 \\ 
 $2^{-4}$ & 1.4053 $\times$ $10^{-1}$  & 0.7824\\
 $2^{-5}$ & 8.235 $\times$ $10^{-2}$ & 0.7901 \\
 \hline
\end{tabular}
\quad
~~~~
\begin{tabular}{ |c|c|c| } 
 \hline
 ${h}$ & error & order of conv.\\
 \hline
 $2^{-1}$ & 6.7731 $\times$ $10^{-1}$ & -\\ 
 $2^{-2}$ & 4.1610 $\times$ $10^{-1}$  & 0.7028\\ 
 $2^{-3}$ & 2.4403 $\times$ $10^{-1}$ & 0.7698 \\ 
 $2^{-4}$ & 1.4197 $\times$ $10^{-1}$  & 0.7814\\
 $2^{-5}$ & 8.241 $\times$ $10^{-2}$ & 0.7896 \\ 
 \hline
\end{tabular}
\bigbreak
\caption{Errors and orders of
convergence for SIPG and NIPG methods on uniform mesh for Example 2}
\end{table}
Table 6.2 depicts the errors and orders of
convergence behavior of SIPG and NIPG methods on uniform mesh for Example 2. Figure 6.3 describes the behaviour of the residual estimators for SIPG and NIPG methods, with the increasing degree of freedom on adaptive meshes. Clearly, the estimator converges optimally on the adaptive mesh. Figure 6.4 show the adaptive mesh refinement at level 23 for the SIPG and NIPG method. We observe that the mesh refinement is high near the contact edge due to the effect of traction and near the  corners due to the intersection of boundaries.


\begin{figure*}[!ht]
	\begin{minipage}{7cm}
			
	\includegraphics[width=1.1\textwidth]{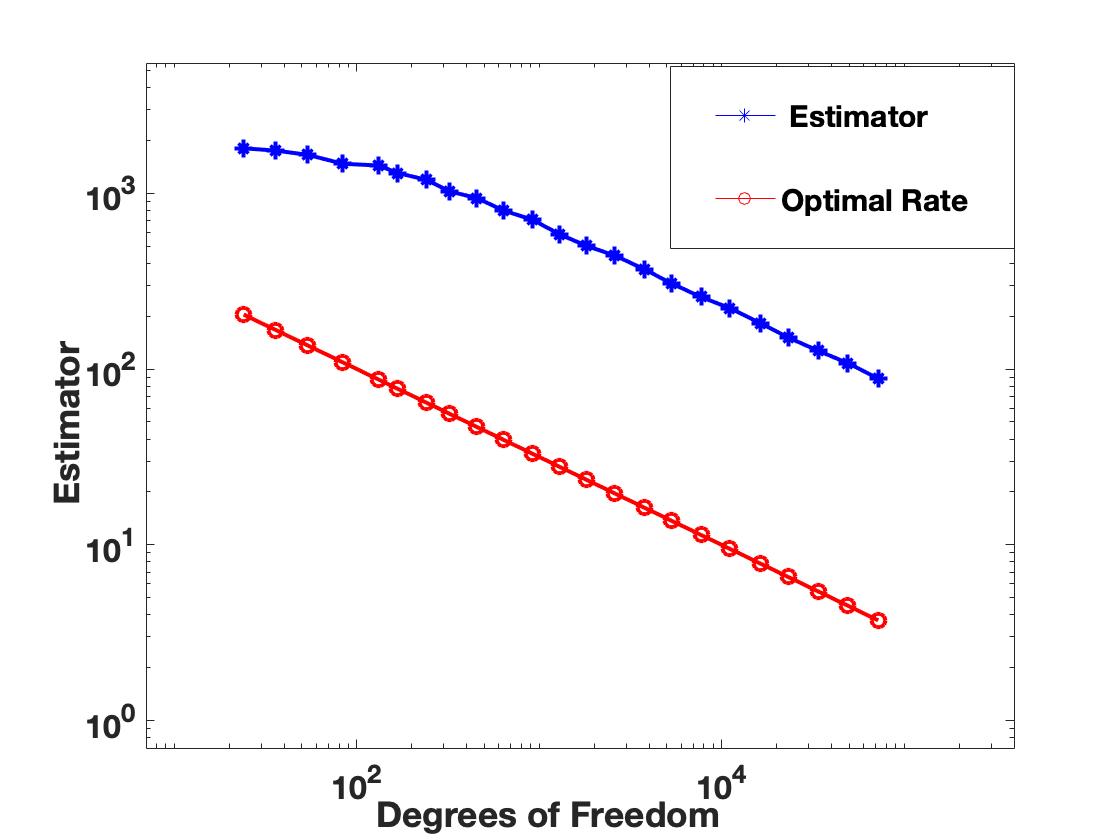}
	
	\end{minipage}
	\hspace{0.1cm}
	\begin{minipage}{7cm}		
	
	\includegraphics[width=1.1\textwidth]{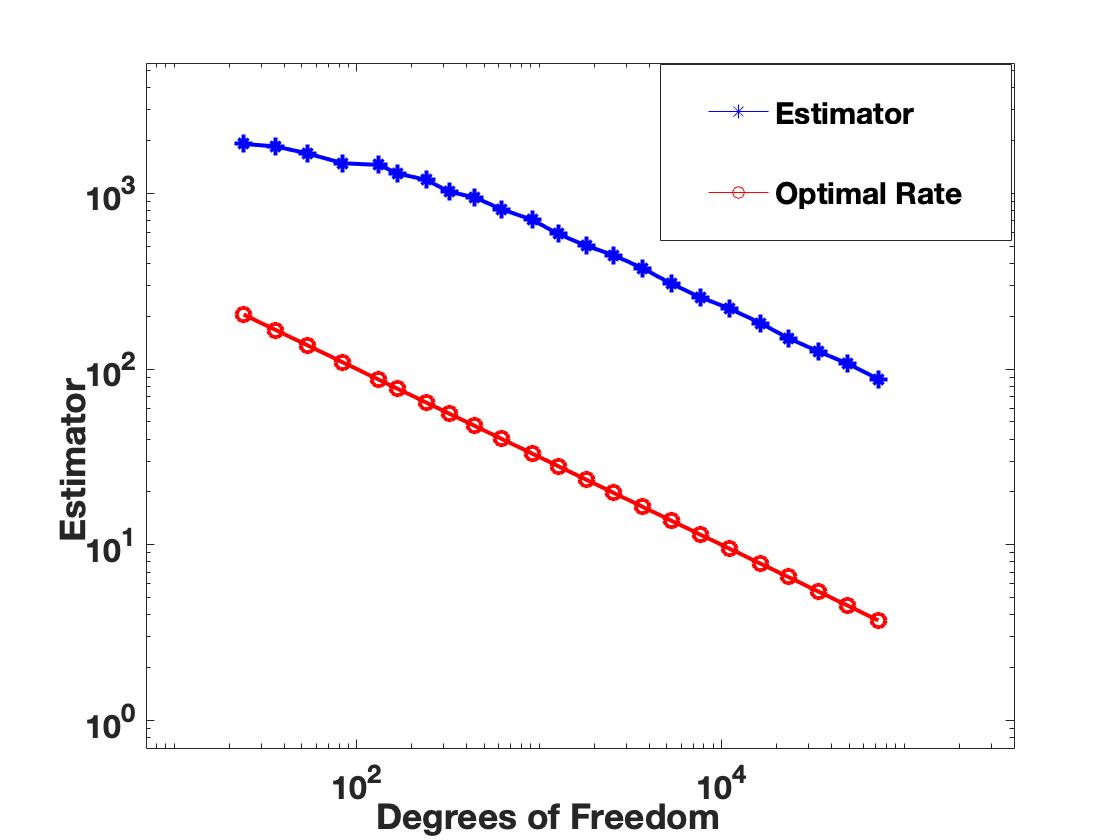}	

	\end{minipage}
\caption{Estimator for SIPG and NIPG method for Example 2}
	\label{4}
\end{figure*}
\begin{figure*}[!ht]
	\begin{minipage}{7cm}
			
	\includegraphics[width=1.1\textwidth]{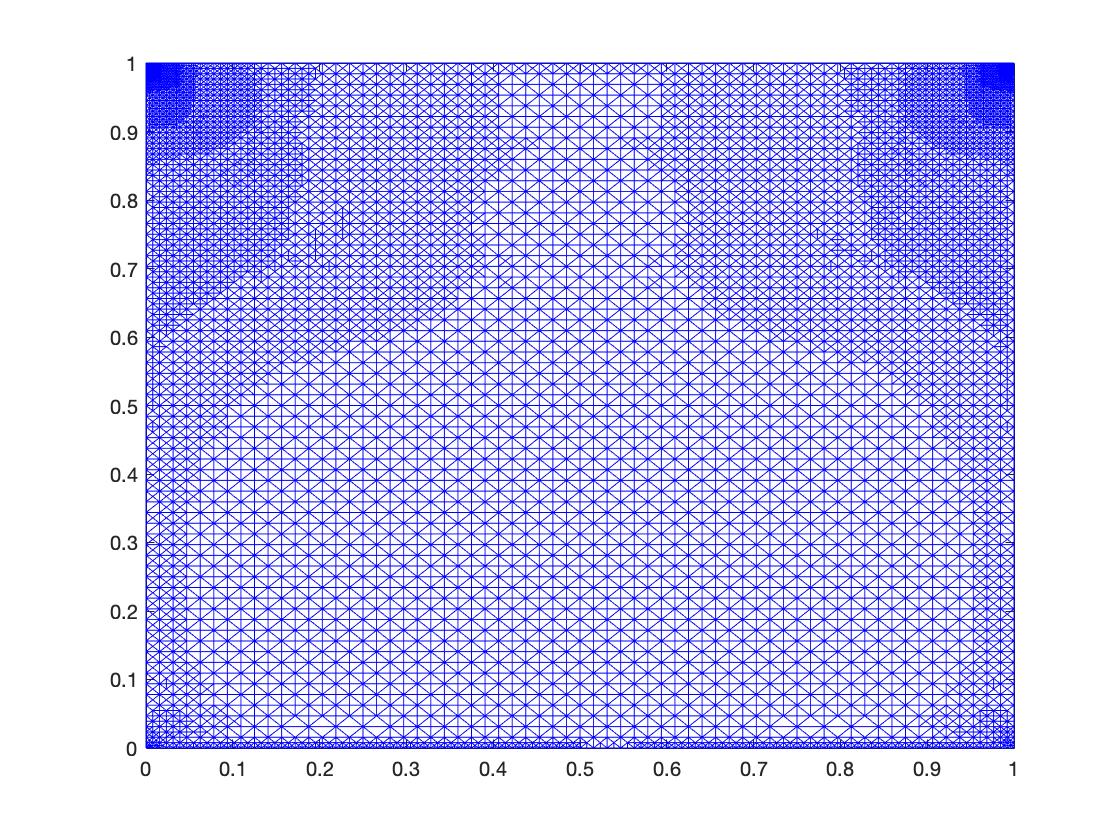}
	
	\end{minipage}
	\hspace{0.1cm}
	\begin{minipage}{7cm}		
	
	\includegraphics[width=1.1\textwidth]{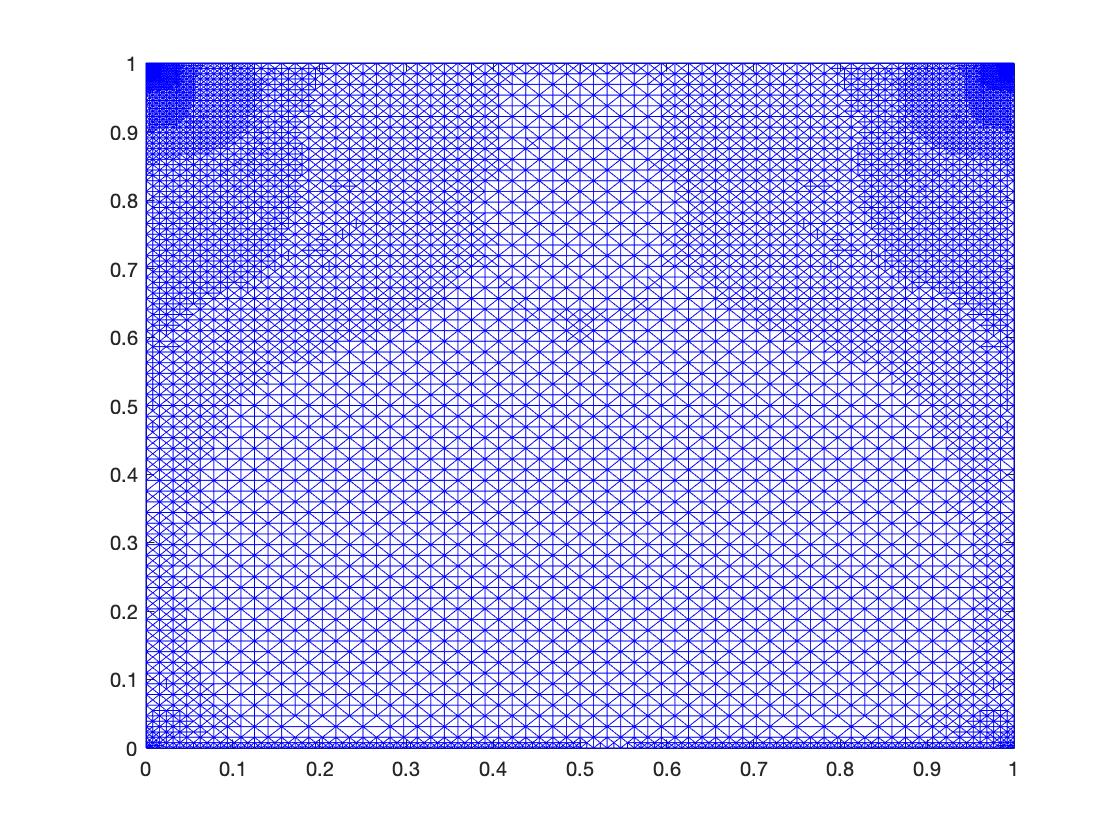}	

	\end{minipage}
\caption{Adaptive mesh for SIPG and NIPG methods for Example 2 at level 23}
	\label{6}
\end{figure*}
\section{Conclusions} \label{sec:Summ}
In this paper, we have derived residual based a posteriori error estimators for a class of DG methods for frictional contact problem with reduced normal compliance. The reliability and the efficiency of  a posteriori error estimator has been discussed. An abstract a priori error estimate has been derived assuming minimal regularity on the exact solution $\b{u}$. Numerical results are presented to demonstrate the convergence behaviour over uniform as well as adaptive mesh. The results of this article are also valid for conforming finite element methods. The case with $m_t>0$ will be addressed in future.

\end{document}